\NeedsTeXFormat{LaTeX2e}
\documentclass{amsart}
\usepackage{amssymb}
\usepackage{amscd}
\usepackage[latin1]{inputenc}
\usepackage{amsmath}
\usepackage{amsfonts}
\usepackage{latexsym}
\usepackage{graphicx}
\usepackage{color}
\usepackage{epsfig}
\newtheorem{theorem}{Theorem}[section]
\newtheorem{corollary}[theorem]{Corollary}
\newtheorem{lemma}[theorem]{Lemma}
\newtheorem{definition}[theorem]{Definition}

\newtheorem{proposition}[theorem]{Proposition}
\newtheorem{example}[theorem]{Example}
\newtheorem{remark}[theorem]{Remark}
\newtheoremstyle{nonum}{}{}{\upshape}{}{\itshape}{.}{ }{#1#3}
\theoremstyle{nonum}
\newtheorem{remark*}{Remark}

\newcommand{\comment}[1]{}
\def \beq {\begin{eqnarray}}
\def \eeq {\end{eqnarray}}
\def \beqn {\begin{eqnarray*}}
\def \eeqn {\end{eqnarray*}}

\def\A{{\mathcal A }}

\def\F{{\mathcal F }}

\def\L{{\mathcal L }}

\def\mN{{\mathbb{N}}}
\def\mZ{{\mathbb{Z}}}
\def\mR{{\mathbb{R}}}
\def\zz{{\mathbb{Z}}}
\def\PP{{\mathcal{P}}}
\def\supp{{\mathrm{supp}}}
\def\nn{{nearest-neighbor}}

\begin{document}
\title[Representing topological pressure via conditional probabilities]{An integral representation for topological pressure in terms of conditional probabilities}

\begin{abstract}
 Given an equilibrium state $\mu$ for a continuous function $f$ on a shift of finite type $X$, the pressure of $f$ is the integral, with respect to $\mu$, of the sum of $f$ and the information function of $\mu$. We show that under certain assumptions on $f$, $X$ and an invariant measure $\nu$, the pressure of $f$ can also be represented as the integral with respect to $\nu$ of the same integrand.
 Under stronger hypotheses we show that
this representation holds for all invariant measures $\nu$.  We establish an algorithmic implication for approximation of pressure, and we relate our results to a result in thermodynamic formalism.
\end{abstract}

\date{}
\author{Brian Marcus}
\address{Brian Marcus\\
Department of Mathematics\\
University of British Columbia\\
1984 Mathematics Road\\
Vancouver, BC V6T 1Z2}
\email{marcus@math.ubc.ca}
\author{Ronnie Pavlov}
\address{Ronnie Pavlov\\
Department of Mathematics\\
University of Denver\\
2360 S. Gaylord St.\\
Denver, CO 80208} \email{rpavlov@du.edu} \keywords{multidimensional
shifts of finite type; Markov random field; Gibbs measure; entropy; pressure; equilibrium state}
\renewcommand{\subjclassname}{MSC 2000}
\subjclass[2000]{Primary: 37D35, 37B50; Secondary: 37B10, 37B40}
\maketitle

\section{Introduction}\label{intro}

Given a finite alphabet $\A$, the entropy of a shift-invariant
measure $\mu$ on $\A^{\mZ}$ is sometimes defined as the expected
conditional entropy of the present given the past, or
\[
h(\mu) = \int H(x_0 \ | \ x_{-1}, x_{-2}, \ldots) \ d\mu(x_{-1},
x_{-2}, \ldots),
\]
where
\[
 H(x_0 \ | \ x_{-1}, x_{-2}, \ldots)  =
 \sum_{a \in \A} - \mu(x_0 = a \ | \ x_{-1}, x_{-2}, \ldots) \log
\mu(x_0 = a \ | \ x_{-1}, x_{-2}, \ldots).
\]
Equivalently,
\[
h(\mu) = \int - \log \mu(x_0 \ | \ x_{-1}, x_{-2}, \ldots) \
d\mu(x).
\]
It is less well-known that there is an analogue of this formula for
a shift-invariant measure $\mu$ on $\A^{\mZ^d}$ which involves the
notion of lexicographic past. Define $\mathcal{P} \subseteq
\mathbb{Z}^d$ to be the set of sites lexicographically less than
$0$, or equivalently the set of $v \in \mathbb{Z}^d \setminus \{0\}$
whose last  nonzero coordinate is negative. Then
\begin{equation}\label{entpast}
h(\mu) = \int - \log \mu(x_0 \ | \ \{x_p\}_{p \in \mathcal{P}}) \
d\mu
\end{equation}
(see~\cite[Theorem 15.12]{Georg} or ~\cite[p. 283, Theorem
2.4]{Krengel}).

The integrand in (\ref{entpast}), which we will denote by
$I_{\mu}(x)$, is fundamental in ergodic theory and information
theory and is known as the information function.

We will mostly be interested in implications of (\ref{entpast}) on
topological pressure. For any continuous function $f$ on a
$\mathbb{Z}^d$ shift of finite type, it is a consequence of the
variational principle (\cite{walters_var},~\cite{walters}) that
the measure-theoretic pressure function $P_{\rho}(f) = h(\rho) +
\int f \ d\rho$ achieves its maximum on a nonempty set of measures;
such measures are called equilibrium states of $f$, and the maximum
is called the topological pressure $P(f)$. When the function $f$ is locally
finite, all such equilibrium states are examples of so-called finite-range Gibbs
measures (\cite{LR},~\cite{Ruelle}). %defined by potentials.
For more on equilibrium states and Gibbs measures, see Sections~\ref{MRF}
and~\ref{pressure}.

When applied to  an equilibrium state $\mu$ for $f$, (\ref{entpast})
clearly implies
\begin{equation}\label{genpressurerep}
P(f) = \int I_{\mu}(x) + f(x) \ d\mu.
\end{equation}

For certain classes of
equilibrium states and Gibbs measures, sometimes there are even
simpler representations for the pressure.
A recent example of this was given by Gamarnik and Katz in
\cite[Theorem 1]{GK}, who showed that for any Gibbs measure $\mu$
which has a measure-theoretic mixing property, called strong spatial
mixing, and whose support contains a so-called safe symbol $0$ (a
very strong topological mixing property, defined in Section~\ref{subshifts}),
\begin{equation}\label{safepressurerep}
P(f) = I_{\mu}(0^{\mathbb{Z}^d}) + f(0^{\mathbb{Z}^d})
\end{equation}
(here, $0^{\mathbb{Z}^d} \in \A^{\zz^d}$ is the configuration on
$\zz^d$ which is 0 at every site of $\zz^d$). This result was the
primary motivation for our paper.

They used this simple representation to give a polynomial time
approximation algorithm for $P(f)$ in certain cases. Approximation
schemes are very important because in most cases it is quite
difficult (and sometimes impossible!) to obtain exact, closed form
expressions for entropy and pressure (\cite{HM}), let alone the exact values of
conditional $\mu$-measures of specific cylinder sets or exact values
of the function $I_{\mu}$ that are needed to evaluate
(\ref{safepressurerep}).

A consequence (Corollary~\ref{easycor}; see also
Corollary~\ref{easycor3}) of one of our main results is that under
certain hypotheses on the equilibrium state $\mu$ and its support, the
integrand in (\ref{genpressurerep}) yields $P(f)$ when integrated
against \textit{any} shift-invariant measure $\nu$ whose support
is contained within the support of $\mu$:
\begin{equation}\label{nupressurerep}
P(f) = \int I_{\mu}(x) + f(x) \ d\nu.
\end{equation}

For instance, (\ref{safepressurerep}) is the special case of
(\ref{nupressurerep}) where $\nu$ is the point mass at
$0^{\mathbb{Z}^d}$. This consequence is related to a result of
Ruelle~\cite[4.7b]{Ruelle} which characterizes when two interactions
have a common Gibbs measure; this connection is discussed in
Section~\ref{connections}. The conclusion of Corollary~\ref{easycor}
is well known when $d=1$ and the support of $\mu$ is an irreducible
shift of finite type.

We have two main results, Theorems~\ref{easythm} and ~\ref{hardthm},
with different sets of hypotheses for representation of pressure of
$f$ with respect to a given invariant measure $\nu$.
For each theorem, our hypotheses are of three broad types: a topological mixing
condition on the support of $\mu$ (see Section~\ref{D-condition}), a type of continuity assumption
on $p_{\mu}$ over $\supp(\nu)$ (see Section~\ref{SSM}), and a type of positivity assumption on $p_{\mu}$
over $\supp(\nu)$ (see Section~\ref{positivity}).
We state and prove Theorem~\ref{easythm} for arbitrary dimension $d$, but
Theorem~\ref{hardthm} only for dimension $2$; we believe that Theorem~\ref{hardthm} is
true for all $d$ but the construction involved in the proof for $d >2$ is somewhat intricate geometrically.

All of these hypotheses, for both
theorems, are weaker than those used in \cite[Theorem 1]{GK}, and so
each implies (\ref{safepressurerep}) by taking $\nu$ to be the
point mass at $0^{\mathbb{Z}^d}$.

From Theorem~\ref{easythm} we prove Proposition~\ref{approx_scheme}
which, for $d > 1$ and certain $f$, shows that $P(f)$ can be approximated to within
tolerance $\epsilon$ in time $e^{O((\log \frac{1}{\epsilon})^{d-1})}$.
This yields a polynomial time approximation scheme when $d=2$ and
generalizes some cases of the approximation algorithms given
in~\cite{GK}. This scheme is more efficient than the approximation scheme
of~\cite{MP2}, but requires the additional hypothesis that
(\ref{nupressurerep}) holds for some ``simple'' $\nu$.

We summarize the remainder of this paper. In Section~\ref{defns}, we
give relevant definitions, background and preliminary results. In
Section~\ref{main}, we prove our main results,
Theorems~\ref{easythm} and ~\ref{hardthm}. In
Section~\ref{approx_alg}, we describe a consequence of our pressure
representation results that yields a method for approximating
pressure, and we discuss connections with work of Gamarnik and Katz
\cite{GK}. Finally, in Section~\ref{connections}, we describe a
connection between our results and the thermodynamic formalism of
Ruelle \cite{Ruelle}.

\section{Definitions and Preliminary Results}
\label{defns}

\subsection{Subshifts}
\label{subshifts}
~

We view $\mathbb{Z}^d$ as a graph (the so-called {\bf cubic
lattice}), where vectors in $\mathbb{Z}^d$ are the vertices (also
sometimes called {\bf sites}), and two vertices $u,v \in
\mathbb{Z}^d$ are said to be {\bf adjacent}, and we write $u \sim
v$, if $|u - v| = 1$, where $|\cdot|$ is the $L^1$ metric. For
subsets $S,T$ of $\mathbb{Z}^d$, $d(S,T)$ denotes the distance
between $S$ and $T$ using this metric.

We also will use the {\bf lexicographic ordering} on $\mathbb{Z}^d$,
where $v < v'$ if $v \neq v'$ and, for the largest $i$ for which
$v_i \neq v_{i'}$, $v_i$ is strictly smaller than $v_{i'}$. The {\bf
lexicographic past} $\mathcal{P}$ is the set of all $v \in
\mathbb{Z}^d$ smaller than the zero vector.

An {\bf edge} is an unordered pair $(u,v)$ of adjacent sites in
$\mathbb{Z}^d$. The {\bf boundary} of a set $S \subset
\mathbb{Z}^d$, denoted by $\partial S$, is the set of $v \in S^c$
which are adjacent to some element of $S$. In the case where $S$ is
a singleton $\{v\}$, we call the boundary the {\bf set of nearest
neighbors} $N_v$. The {\bf inner boundary} of $S$, denoted by
$\underline{\partial} S$, is the set of sites in $S$ adjacent to
some element of $S^c$, or $\partial(S^c)$.

For any integers $a < b$, we use $[a,b]$ to denote $\{a,a+1,\ldots,b\}$.

An {\bf alphabet} $\A$ is a finite set with at least two elements. A
{\bf configuration} $u$ on the alphabet $\A$ is any mapping from a
non-empty subset $S$ of $\mathbb{Z}^d$ to $\A$, where $S$ is called
the \textbf{shape} of $u$. For any configuration $u$ with shape $S$
and any $T \subseteq S$, denote by $u(T)$ the restriction of $u$ to
$T$, i.e. the subconfiguration of $u$ occupying $T$. For $S,T
\subset \zz^d$, $x \in \A^S$ and $y \in \A^T$, $xy$ denotes the
configuration on $S \cup T$ defined by $(xy)(S) = x$ and $(xy)(T) =
y$, which we call the {\bf concatenation} of $x$ and $y$ (if $S \cap
T \ne \emptyset$, this requires that $x(S \cap T) = y (S \cap T)$).
For a symbol $a \in \A$ and a subset $S \subseteq \zz^d$, $a^S$
denotes the configuration on $S$ which takes value $a$ at all
elements of $S$.

For any $d$, we use $\sigma$ to denote the natural {\bf shift
action} on $\A^{\mathbb{Z}^d}$ defined by $(\sigma_{v}(x))(u) =
x(u+v)$.

For any alphabet $\A$, $\A^{\mathbb{Z}^d}$ is a topological space
when endowed with the product topology (where $\A$ has the discrete
topology), and any subsets will inherit the induced topology. A
basis for the topology is the collection of \textbf{cylinder sets}
which are sets of the form $[w] := \{x \in \A^{\mathbb{Z}^d} \ : \
x(S) = w\}$, where $w$ is a configuration with arbitrary finite
shape $S \subseteq \mathbb{Z}^d$.

A \textbf{subshift} (or \textbf{shift space}) is a closed,
translation invariant subset of $\A^{\zz^d}$.  An equivalent
definition is given as follows. Let $\A^*$ denote the set of all
configurations on finite subsets of $\zz^d$, where we often identify
two configurations if they differ by a translate.  A subset $X$ of
$\A^{\zz^d}$ is a subshift iff
$$
X = \{ x \in \A^{\zz^d}: ~ x(S) \not\in \F ~ \mbox{ for all finite
subsets } S \subset \zz^d\}
$$
for some list $\F \subset \A^*$ of configurations on finite subsets.
For a subshift $X$, when we wish to emphasize the dimension of the
lattice we will refer to $X$ as a \textbf{$\zz^d$-subshift}.

In the case where $\F$ can be chosen to be finite, $X$ is called a
\textbf{shift of finite type} (SFT). In the case where $\F$ consists
of configurations only on edges, $X$ is called a
\textbf{nearest-neighbor shift of finite type}.

The following are prominent examples of nearest neighbor SFT's.

\begin{example}
The \textbf{hard square shift} is the nearest-neighbor
$\mathbb{Z}^d$-SFT with alphabet $\{0,1\}$ defined by forbidding 1's
on any adjacent pair of sites.
\end{example}

\begin{example}
The \textbf{$k$-checkerboard} (or \textbf{$k$-coloring}) SFT
$C^{(d)}_k$ is the nearest-neighbor $\mathbb{Z}^d$-SFT with alphabet
$\{0,1, \ldots, k-1\}$ consisting of all configurations on $\zz^d$
such that letters at adjacent sites must be different.
\end{example}

\begin{definition}
For any $\mathbb{Z}^d$ subshift $X$, the \textbf{language} of $X$ is
$$
\L(X) = \bigcup_{\{S \subset \zz^d, ~ |S| < \infty\}} \L_S(X)
$$
where
$$
\L_S(X) = \{x(S): x \in X\}.
$$
\end{definition}

%A subshift $X$ is \textbf{irreducible} if for all $u \in  \L_S(X)$,
%$v \in  \L_T(X)$ there exists $n \in \zz^d$ and $x \in X$ such
%that $x(S) = u$ and $x(T + n) = v$.

Given a forbidden list $\F$ that defines a subshift $X$ and $S
\subseteq \zz^d$, every configuration on $S$ that does not contain
any element of $\F$ is called \textbf{locally admissible}; every
configuration on $S$ that extends to an element of $X$ is called
\textbf{globally admissible}. Note that every locally admissible
configuration is globally admissible, but not conversely. Clearly,
a finite configuration is globally admissible if and only if it is in $\L(X)$.

\begin{definition}
A \nn ~SFT $X$ is \textbf{single-site fillable (SSF)}
if for  some forbidden list $\F$ of nearest neighbors that defines
$X$ and every $\eta \in \A^{N_0}$, there exists $a \in \A^{0}$ such
that $\eta a$ is locally admissible.
\end{definition}

It is easy to see that a \nn ~SFT $X$ satisfies SSF if and only if
for some forbidden list $\F$ of nearest neighbors that defines $X$,
every locally admissible configuration is globally admissible.

In the definition of SSF above, the symbol $a$ may depend on the
configuration $\eta$.  This generalizes the concept of a
\textbf{safe symbol}, which is a symbol $a \in \A^{\{0\}}$ such that
$\eta a$ is locally admissible for every configuration $\eta \in
\A^{N_0}$ (strictly speaking, the concept of safe symbol applies to
a forbidden list on the alphabet of symbols that occur in a point of
the subshift). The hard square shift has a safe symbol in every
dimension. No checkerboard shift has a safe symbol, but for $k \ge
2d+1$, $C^{(d)}_k$ satisfies SSF.

\subsection{Markov Random Fields and Gibbs Measures}
\label{MRF}
~

We will frequently speak of measures on $\A^{\zz^d}$, and all such
measures in this paper will be Borel  probability measures. This
means that any $\mu$ is determined by its values on the cylinder
sets.  For notational convenience, rather than referring to a
cylinder set $[w]$ within a measure or conditional measure, we just
use the configuration $w$. For instance, $\mu(w \cap v \ | \ u)$
represents the conditional measure $\mu([w] \cap [v] \ | \ [u])$. By
the \textbf{support}, $\supp(\mu)$, of $\mu$, we mean the
topological support, i.e., the smallest closed set of full measure.
Note that for a configuration $w$ on a finite set, $[w]$ intersects
$\supp(\mu)$ iff $\mu(w) > 0$.

A measure $\mu$  on $\A^{\zz^d}$ is \textbf{shift-invariant} (or
\textbf{stationary} or \textbf{translation-invariant})  if $\mu(A) =
\mu(\sigma_{v} A)$ for all measurable sets $A$ and $v \in \zz^d$.

\begin{definition}
A shift-invariant $\zz^d$-measure $\mu$ is a \textbf{$\zz^d$ Markov
random field} (or MRF) if, for any finite $S \subset \zz^d$, any
$\eta \in \A^S$, any finite $T \subset \zz^d$ s.t. $\partial S
\subseteq T \subseteq \zz^d \setminus S$, and any $\delta \in \A^T$
with $\mu(\delta)
> 0$,
\begin{equation}\label{MRFdefn}
\mu(\eta \ | \ \delta(\partial S)) = \mu(\eta \ | \ \delta).
\end{equation}
\end{definition}

Informally, $\mu$ is an MRF if, for any finite $S \subset \zz^d$,
configurations on the sites in $S$ and configurations on the sites
in $\zz^d \setminus (S \cup
\partial S)$ are $\mu$-conditionally independent given a configuration on the sites
in $\partial S$.

\begin{definition}
For any Markov random field  $\mu$, any finite $S \subseteq \mathbb{Z}^d$, and
any $\delta \in \A^{\partial S}$ with $\mu(\delta) > 0$, define the measure $\mu^{\delta}$ on $\A^S$ by
\[
\mu^{\delta}(w) = \mu(w \ | \ \delta)
\]
for every $w \in \A^S$.
\end{definition}

We will deal mostly with nearest-neighbor Gibbs measures, which are
MRF's specified by nearest-neighbor interactions, defined below.

\begin{definition}
A \textbf{nearest-neighbor interaction} is a shift-invariant
function $\Phi$ from the set of configurations on edges in $\mathbb{Z}^d$
to $\mathbb{R} \cup \infty$. Here, shift-invariance means that
$\Phi(\sigma_v w) = \Phi(w)$ for all configurations $w$ on edges and
all $v \in \zz^d$.
\end{definition}

Clearly, a nearest-neighbor interaction is defined by only finitely
many numbers, namely the values of the interaction on configurations
on edges $\{0,e_i\}, i = 1, \ldots, d$.

For a nearest-neighbor interaction $\Phi$, we define its
\textbf{underlying SFT} as follows:
\[
X_\Phi = \{x \in \A^{\mathbb{Z}^d} \ : \ \Phi(x(\{v,v'\})) \neq
\infty, \textrm{ for all } v \sim v' \mbox{ in } \mathbb{Z}^d\}.
\]
Note that $X_\Phi$ is a nearest-neighbor SFT.

\begin{definition}
For a nearest-neighbor interaction $\Phi$, any finite set $S \subset
\mathbb{Z}^d$, and any $w \in \A^{S}$, the \textbf{energy function}
of $w$ with respect to $\Phi$ is
\[
U^{\Phi}(w) := \sum_{e} \Phi(w(e)),
\]
where the sum ranges over all edges $e$ of $S$. The
\textbf{partition function} of $S$ is
\[
Z^{\Phi}(S) := \sum_{w \in \A^{S}} e^{-U^{\Phi}(w)}.
\]
For $\delta \in \A^{\partial S}$ we define
\[
Z^{\Phi, \delta}(S) := \sum_{w \in \A^{S}} e^{-U^{\Phi}(w\delta)}.
\]

In all definitions, we adopt the convention that $\infty + x =
\infty$ for all $x \in \mathbb{R}$.
\end{definition}

\begin{definition}
For any nearest-neighbor interaction $\Phi$, an MRF
$\mu$ is called a \textbf{Gibbs measure} for $\Phi$ if for any
finite set $S \subset \mathbb{Z}^d$ and $\delta \in \A^{\partial S}$
for which $\mu(\delta)
> 0$, we have $Z^{\Phi, \delta}(S) \ne 0$ and, for
any $w \in \A^S$,
\[
%\mu(x(S) = w \ | \ x(\partial S) = \delta)
\mu^\delta(w)
 = \frac{e^{-U^{\Phi} (w
\delta)}}{Z^{\Phi, \delta}(S)}.
\]
\end{definition}

Note that for a Gibbs measure $\mu$,  $\supp(\mu)$ is automatically
contained in the underlying SFT $X_\Phi$; we have allowed the
interaction to take on infinite values in order to allow our Gibbs
measures to be supported on proper subsets of $\A^{\zz^d}$. In our
main results, we will assume that $\Phi$ is a nearest-neighbor
interaction and that $X_\Phi$ satisfies a topological
mixing property (the $D$-condition or block $D$-condition described
below) that guarantees $\supp(\mu) = X_\Phi$.

Given a nearest neighbor SFT $X$, and a forbidden list of nearest
neighbor configurations, a \textbf{uniform Gibbs measure} on $X$ is
a Gibbs measure corresponding to the nearest-neighbor interaction
which is 0 on all nearest-neighbor configurations except the
forbidden configurations (on which it is $\infty$).

Every nearest-neighbor interaction $\Phi$ has as least one Gibbs
measure; this is a very special case of a general result of Ruelle
\cite{Ruelle}. Often there are multiple Gibbs measures for a single
$\Phi$; this phenomenon is often called a phase transition. One type
of condition which guarantees uniqueness of Gibbs measures is
so-called spatial mixing, with two variants defined below.

\subsection{Spatial Mixing}
\label{SSM}
~

Let $B_n = [-n,n]^d$, the
d-dimensional cube of side length $2n+1$ centered at the origin.

\begin{definition}
For a function $f(n) : \mathbb{N} \rightarrow \mathbb{R}^+$,
$\lim_{n \rightarrow \infty} f(n) = 0$, we say that an MRF $\mu$
satisfies \textbf{weak spatial mixing \textup{(}WSM\textup{)} with rate $f(n)$} if for
any finite set $S \subseteq B_n$ and any $w \in \A^S$, $\delta,
\delta' \in \A^{\partial B_n}$ s.t. $\mu(\delta), \mu(\delta') > 0$,
\[
 | \mu^{\delta}(w) - \mu^{\delta'}(w) | < |S| f(d(S, \partial
B_n)).
\]
\end{definition}

\begin{definition}
For a function $f(n) : \mathbb{N} \rightarrow \mathbb{R}^+$,
$\lim_{n \rightarrow \infty} f(n) = 0$, we say that an MRF $\mu$
satisfies \textbf{strong spatial mixing \textup{(}SSM\textup{)} with rate $f(n)$} if
for any disjoint finite sets $S,T \subseteq B_n$ and any $v \in
\A^T$, $w \in \A^S$, $\delta, \delta' \in \A^{\partial B_n}$ s.t.
$\mu(\delta), \mu(\delta'), \mu^\delta(v), \mu^{\delta'}(v) > 0$,
\[
| \mu^{\delta}(w \ | \ v) - \mu^{\delta'}(w \ | \ v) | < |S| f(d(S,
\partial B_n)).
\]
\end{definition}

Informally, weak spatial mixing means that conditioning on a
boundary configuration does not have much effect on the measure of a
configuration on a set $S$ far from the boundary, and strong spatial
mixing means that this is still true even if one first conditions on
a configuration on sites which may be close to $S$.

The factor of $|S|$ on the right-hand side is unavoidable in both
definitions; without it weak spatial mixing would force a much more
stringent condition on $\mu$ called $m$-dependence. (see
\cite{bradley})

It is well-known that for a nearest-neighbor interaction $\Phi$,
weak spatial mixing at any rate implies that there is only one Gibbs
measure defined by $\Phi$~\cite[Proposition 2.2]{Weitz2}. Well-known
examples of Gibbs measures that satisfy SSM include the unique
uniform Gibbs measures for the hard square $\zz^2$-SFT and the
$k$-checkerboard $\zz^2$-SFT for $k \ge 12$
(see~\cite{GK},\cite{MP}).

We will need only somewhat weaker spatial mixing conditions for our
main results. These conditions can be formulated in terms of a
function naturally associated to $\mu$ and described as follows.

Recall that $\mathcal{P}$ denotes the lexicographic past in $\zz^d$.
For any shift-invariant measure $\mu$, define the function
\[
p_{\mu}(x) := \mu(x(0) \ | \ x(\mathcal{P}))
\]
which is defined $\mu$-a.e. on $\supp(\mu)$. Note that $p_\mu(x)$
depends only on $x(\PP \cup \{0\})$ for $\mu$-a.e. $x$. Recall that
$I_{\mu}(x) := -\log p_{\mu}(x)$ is known as the information
function of $\mu$.

For any finite $S \subset \zz^d$, define the function
\[
p_{\mu,S}(x) := \mu(x(0) \ | \ x(S)).
\]
We will sometimes refer to the special case
$$
p_{\mu,n}(x) = p_{\mu,\PP_n}(x), \textrm{ where } \PP_n = B_n \cap \PP.
$$

\begin{definition}
We write $\lim_{S \rightarrow \PP} p_{\mu,S}(x) = L$ to mean the
limit in the ``net'' sense: for any $\epsilon
> 0$, there exists $n$ such that for all finite $S$ satisfying
$\PP_n \subset S \subset \PP$, we have $|p_{\mu, S}(x) - L| <
\epsilon$.
\end{definition}

By martingale convergence, $\lim_{n \rightarrow
\infty} p_{\mu,n}(x) = p_\mu(x)$ for $\mu$-a.e. $x \in \supp(\mu)$.
For this reason, for any $x \in \supp(\mu)$, if  $\lim_{n \rightarrow \infty}
p_{\mu,n}(x)$ exists (and hence if $\lim_{S \rightarrow \PP}
p_{\mu,S}(x)$ exists), we will take $p_\mu(x)$ to be this limit.
While we do not know if
$p_{\mu,S}(x)$ always converges a.e. as $S \rightarrow \PP$ in the net sense, $p_{\mu,S}(x)$
is known to converge in $L^1$ as $S \rightarrow \PP$ in the net sense~\cite[Theorem 1.2]{Walsh}.

Our main results, Theorem~\ref{easythm} and~\ref{hardthm}, will
establish a representation for pressure in terms of a given
shift-invariant measure $\nu$.  For Theorem~\ref{easythm} we assume
$\lim_{S \rightarrow \PP} p_{\mu,S}(x)  = p_\mu(x)$ uniformly on
$\supp(\nu)$, i.e. that $\forall \epsilon > 0$ $\exists N > 0$ so that
$\forall x \in \supp(\nu)$,
$\PP_n \subset S \subset \PP \Longrightarrow |p_{\mu, S}(x) - p_\mu(x)| < \epsilon$.

For Theorem~\ref{hardthm}, we will need a stronger type of convergence.

\begin{definition}
We write  $\lim_{S \rightarrow \PP, ~ U \rightarrow +\infty}
p_{\mu,S \cup U}(x) = L$ to mean that  for any $\epsilon
> 0$, there exists $n$ such that for all finite $S,U$ satisfying
$\PP_n \subset S \subset \PP$ and  $U \subset (B_n \cup \PP)^c$, we
have $|p_{\mu, S \cup U}(x) - L| < \epsilon$.
\end{definition}

We note that this definition could also be written
in terms of a single set; the only property required of $S \cup U$ is
that $S \cup U$ contains $\PP_n$ and is contained in $B_n^c \cup \PP_n$.
The definition is written with $S$ and $U$ decoupled only to make
comparisons to $\lim_{S \rightarrow \PP} p_{\mu,S}$ more clear.

Again, we will take $p_\mu(x)$ to be the value of this limit when it
exists. For Theorem~\ref{hardthm}, we will assume $\lim_{S
\rightarrow \PP, ~ U \rightarrow +\infty} p_{\mu,S \cup U}(x) =
p_\mu(x)$ uniformly on $\supp(\nu)$.  Clearly this implies $\lim_{S
\rightarrow \PP} p_{\mu,S}(x) = p_\mu(x)$ uniformly on $\supp(\nu)$.

We have the following implication.

\begin{proposition}
\label{SSM_continuity} For an MRF $\mu$, if $\mu$ satisfies SSM at
any rate, then  \newline $\lim_{S \rightarrow \PP, U \rightarrow
+\infty} p_{\mu,S \cup U}(x) = p_\mu(x)$ uniformly on $\supp(\mu)$.
\end{proposition}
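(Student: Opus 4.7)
The plan is to apply SSM on a small ball $B_R$ and use the MRF property to write both $\mu(x(0) \mid x(S \cup U))$ and $\mu(x(0) \mid x(\PP_N))$ (for large $N$) as convex combinations of a common family of values parametrized by configurations on $\partial B_R$.

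Given $\epsilon > 0$, I would choose $R$ so that $f(R+1) < \epsilon$ and set $n = R+1$. The geometric point is that, because $n > R$, any finite $S$ with $\PP_n \subset S \subset \PP$ and any finite $U \subset (B_n \cup \PP)^c$ automatically satisfy $(S \cup U) \cap B_R = \PP_R$ and $(S \cup U) \cap \partial B_R = \PP \cap \partial B_R$, and the same holds for $\PP_N$ whenever $N \geq n$ (using that $\partial B_R \subset B_{R+1}$). Writing $T = \PP_R$, $Q = \PP \cap \partial B_R$, $Q' = \partial B_R \setminus Q$, the MRF property applied to the region $B_R$ then gives
\[
\mu(x(0) \mid x(A)) = \sum_{\eta \in \A^{Q'}} \mu(x(Q') = \eta \mid x(A)) \, g(\eta),
\]
where $g(\eta) := \mu(x(0) \mid x(T), x(\partial B_R) = x(Q) \sqcup \eta)$ and $A$ is either $S \cup U$ or $\PP_N$. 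Crucially, the function $g$ is the same in both cases, because $x(T)$ and $x(Q)$ are determined by $x$ and common to the two conditionings.

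SSM on $B_R$, with $\{0\}$ in the role of its ``$S$'' and $T$ in the role of its ``$T$'', then yields $|g(\eta) - g(\eta')| < 1 \cdot f(R+1) < \epsilon$ for all valid $\eta, \eta'$ (validity is automatic for $\eta$ appearing with positive weight in the sum, since $x \in \supp(\mu)$). Both convex combinations therefore lie in a common interval of length less than $\epsilon$, so $|p_{\mu, S \cup U}(x) - p_{\mu, N}(x)| < \epsilon$ uniformly in $x \in \supp(\mu)$, in $S, U$, and in $N \geq n$. Applying this inequality to two values of $N$ shows that $(p_{\mu, N}(x))_N$ is Cauchy at every $x \in \supp(\mu)$; by the paper's convention that $p_\mu(x)$ is defined as this pointwise limit wherever it exists, $p_\mu(x)$ exists throughout $\supp(\mu)$, and passing to $N \to \infty$ in the bound gives $|p_{\mu, S \cup U}(x) - p_\mu(x)| \leq \epsilon$ uniformly, as required.

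The only substantive obstacle I foresee is the geometric bookkeeping that legitimizes the MRF decomposition through $\partial B_R$ and confirms that the boundary data $x(Q)$ is truly shared by the conditionings on $x(S \cup U)$ and on $x(\PP_N)$. Beyond that, the argument is a standard MRF-plus-SSM averaging over boundary configurations.
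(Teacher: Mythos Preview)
Your proposal is correct and follows essentially the same strategy as the paper: use the MRF property to write $p_{\mu,S\cup U}(x)$ as a convex combination of conditional probabilities indexed by boundary configurations, then invoke SSM to show all such values lie within $\epsilon$ of one another. The paper carries this out with the separating set $\partial S_n$ where $S_n=B_n\setminus\PP_n$, introducing an auxiliary vector-valued function $\hat p^n_\mu$ and first proving it is uniformly Cauchy; you instead use $\partial B_R$ directly and compare $p_{\mu,S\cup U}$ with $p_{\mu,N}$ in one step, which is a slightly cleaner organization of the same idea. Your geometric bookkeeping (that $(S\cup U)\cap B_R=\PP_R$ and $(S\cup U)\cap\partial B_R=\PP\cap\partial B_R$ once $n=R+1$) is correct, and the reduction $\mu(x(0)\mid x(A),\eta)=g(\eta)$ follows from the standard consequence of the MRF property that $x(B_R)$ and $x((B_R\cup\partial B_R)^c)$ are conditionally independent given $x(\partial B_R)$.
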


\begin{proof}

We find it convenient to define the vector-valued function
\[
\hat{p}^n_{\mu}(x) := \mu(y(0) = \cdot \ | \ y(\partial S_n) =
x(\partial S_n))
\]
where $S_n = B_n \setminus \PP_n$ (so, $\hat{p}^n_{\mu}(x)_a =
\mu(y(0) = a \ | \ y(\partial S_n) = x(\partial S_n))$).

By SSM applied to $S = \{0\}$ and $T_n = \PP_n = \mathcal{P} \cap
B_n$. we see that given $\epsilon
> 0$,  for $n$ sufficiently large, if $x, x' \in
\supp(\mu)$, and $x(T_n) = x'(T_n)$, then $|\hat{p}^n_{\mu}(x) -
\hat{p}^n_{\mu}(x')| < \epsilon$. For $m \ge n$,
$\hat{p}^m_{\mu}(x)$ can be written as a weighted average of
$\hat{p}^n_{\mu}(x')$ for finitely many $x'$ agreeing with $x$ on
$T_n$. Thus, $|\hat{p}^n_{\mu}(x) - \hat{p}^m_{\mu}(x)| < \epsilon$.
So, the sequence $\hat{p}^n_{\mu}$ is uniformly Cauchy and therefore
uniformly convergent.

We can decompose $\partial S_n$ as a disjoint union
$$
\partial S_n = U_n \cup C_n
$$
where  $U_n = (\partial S_n) \cap \mathcal{P}$ %B_n \cap \underline{\partial} \PP$
(the ``upper layer'' of $\PP_n$) and
$C_n = \partial S_n \setminus U_n$ (a``canopy'' sitting over $U_n$).

Fix $n$ and let $S$ be a finite set satisfying $\PP_n \subset S
\subset \PP$ and $U \subset (B_n \cup \PP)^c$. Then

\begin{multline}
\label{eqn_wtd2} p_{\mu,S \cup U}(x) = \sum_{\delta \in \A^{C_n}: ~
\mu(x(S \cup U)\delta) > 0} ~ \mu(x(0) \ | \ x(S
\cup U), \delta) \mu(\delta \ | \ x(S \cup U)) \\
 = \sum_{\delta \in
\A^{C_n}: ~ \mu(x(S \cup U) \delta) > 0} ~ \mu(x(0) \ | \ x(U_n),
\delta) \mu(\delta \ | \ x(S \cup U)) \\
\\
 = \sum_{\delta \in \A^{C_n}: ~ \mu(x(S \cup U)
\delta) > 0} ~ \mu(y(0) = x(0) ~ | ~ y(\partial S_n) =
y_\delta(\partial S_n)) \mu(\delta \ | \ x(S \cup U)),
\end{multline}
where $y_\delta$ is any point in $\supp(\mu)$  such that
$y_\delta(U_n) = x(U_n)$ and $y_\delta(C_n) = \delta$.
(For instance, any $y \in [x(S \cup U) \delta]$.)

Let $g(x)$ denote the (vector-valued) uniform limit of
$\hat{p}^n_\mu$ on $\supp(\mu)$. It follows from the above that
given $\epsilon
> 0$, for sufficiently large $n$,
$$
|\mu(y(0) = x(0) ~ | ~ y(\partial S_n)= y_\delta(\partial S_n)) -
g(x)_{x(0)} | < \epsilon.
$$
Thus, $\lim_{S \rightarrow \PP, U \rightarrow +\infty} p_{\mu,S \cup
U}(x) = g(x)_{x(0)}$ uniformly on $\supp(\mu)$ and $g(x)_{x(0)} =
p_\mu(x)$ by our convention.
\end{proof}

\subsection{Positivity of $p_\mu$}
\label{positivity}
~

Positivity of $p_\mu$ and related functions will play an important
role in our main results.  We begin with an easy implication between
two forms of positivity.

\begin{definition}
For shift-invariant measures $\mu, \nu$, with $\supp(\nu) \subseteq \supp(\mu)$, define
$$
c_{\mu,\nu} := \inf_{x \in \supp(\nu), ~ S \subset \PP, ~ |S| < \infty} ~
p_{\mu,S}(x).
$$
and
$$
c_\mu := c_{\mu,\mu}.
$$
\end{definition}

\begin{proposition}
\label{pos}
If $c_\mu > 0$, then $p_\mu$ is bounded away from zero $\mu$-a.e.,~
i.e., the essential infimum of $p_\mu$ is positive.
\end{proposition}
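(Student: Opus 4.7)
The plan is to reduce the statement to the martingale convergence fact that was already recalled earlier in Section~\ref{SSM}: for $\mu$-a.e.\ $x \in \supp(\mu)$, one has $\lim_{n \to \infty} p_{\mu,n}(x) = p_\mu(x)$, where $p_{\mu,n}(x) = p_{\mu,\PP_n}(x)$ and $\PP_n = \PP \cap B_n$. Since each $\PP_n$ is a finite subset of $\PP$, and since $\mu(x(\PP_n)) > 0$ for every $x \in \supp(\mu)$ (so that the conditional probability $p_{\mu,\PP_n}(x)$ is well-defined on all of $\supp(\mu)$), the defining infimum
\[
c_\mu = \inf_{x \in \supp(\mu),\, S \subset \PP,\, |S| < \infty} p_{\mu,S}(x)
\]
gives in particular the \emph{pointwise} lower bound $p_{\mu,n}(x) \geq c_\mu$ for every $n$ and every $x \in \supp(\mu)$.

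From here the proof is essentially a one-line limit argument. Choose a measurable set $E \subset \supp(\mu)$ of full $\mu$-measure on which $p_{\mu,n}(x) \to p_\mu(x)$; this exists by the cited martingale convergence. For each $x \in E$, the lower bound $p_{\mu,n}(x) \geq c_\mu$ holds for every $n$, so passing to the limit gives $p_\mu(x) \geq c_\mu$. Hence the essential infimum of $p_\mu$ is at least $c_\mu$, which is strictly positive by hypothesis.

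There is no real obstacle, just two small bookkeeping points worth checking: first, that $p_{\mu,\PP_n}(x)$ is genuinely defined at every point of $\supp(\mu)$ and not merely almost everywhere, which is immediate from $\mu(x(\PP_n)) > 0$; and second, that the a.e.\ convergence invoked is the martingale theorem applied to the conditional expectations of $\mathbf{1}_{[x(0)=a]}$ with respect to the filtration generated by coordinates in $\PP_n$, which is exactly the convergence stated in the paragraph preceding the proposition. Combining these gives the claim with no additional input.
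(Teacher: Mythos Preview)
Your proof is correct and follows essentially the same approach as the paper: the paper also observes that the uniform lower bound $p_{\mu,n}(x) \ge c_\mu$ on $\supp(\mu)$ passes to the $\mu$-a.e.\ limit $p_\mu$ via martingale convergence. Your added bookkeeping (well-definedness of $p_{\mu,\PP_n}$ on $\supp(\mu)$ and identification of the relevant martingale) is accurate and slightly more explicit than the paper's terse version.
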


\begin{proof}
If $c_\mu > 0$, then the functions $p_{\mu,n}$ are uniformly bounded
away from zero on $\supp(\mu)$.  Since  $p_{\mu,n}$ converges to
$p_\mu$ $\mu$-a.e., it follows that $p_\mu$ is bounded away from
zero $\mu$-a.e.
\end{proof}

Next, we show that SSF is sufficient for a stronger form of
positivity.

\begin{proposition}\label{SSFpos}
\label{SSF_positive} If $\Phi$ is a nearest-neighbor interaction and
$X_\Phi$ satisfies SSF, then for any Gibbs measure  $\mu$ for
$\Phi$, $c_{\mu} > 0$.
%$$
%\inf_{x \in \supp(\mu), ~ S \subset \PP, ~ |S| < \infty} ~ p_{\mu,
%S}(x) > 0.
%$$
\end{proposition}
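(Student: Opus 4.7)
The plan is to bound $p_{\mu,S}(x) = \mu(x(0) \ | \ x(S))$ uniformly from below by a positive constant depending only on $\Phi$, $\A$, and $d$, via a combinatorial ``correction'' argument layered on top of the Gibbs specification.

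First I would fix $x \in \supp(\mu)$ and a finite $S \subset \PP$, and choose $n$ large enough that $\{0\} \cup S \cup N_0 \subset B_n$. Writing
\[
\mu(x(0) \ | \ x(S)) = \sum_{\delta} \mu(x(0) \ | \ x(S), \delta)\, \mu(\delta \ | \ x(S)),
\]
summed over $\delta \in \A^{\partial B_n}$ with $\mu(x(S), \delta) > 0$, it suffices to bound $\mu(x(0) \ | \ x(S), \delta)$ uniformly below. Applying the Gibbs specification on the set $B_n$ gives
\[
\mu(x(0) \ | \ x(S), \delta) = \frac{\sum_{v} e^{-U^{\Phi}(x(0) x(S) v \delta)}}{\sum_{a \in \A}\sum_{v} e^{-U^{\Phi}(a x(S) v \delta)}},
\]
with $v$ ranging over $\A^{B_n \setminus (\{0\} \cup S)}$ and the terms of infinite energy being zero. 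So it is enough to bound the denominator by a universal constant times the numerator.

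To this end I would build a correction map $(a, v) \mapsto (x(0), v')$ from admissible pairs in the denominator to admissible pairs in the numerator. Let $v'$ agree with $v$ on $B_n \setminus (\{0\} \cup S \cup N_0)$; at each $u \in N_0 \setminus S$, I would invoke SSF at $u$ (by translation invariance of $\Phi$) to pick the lexicographically smallest $v'(u) \in \A$ compatible with the new value $x(0)$ at $0$ and with the (unchanged) values at the other sites of $N_u$. Because the elements of $N_0$ are pairwise non-adjacent in the cubic lattice, these site-by-site corrections do not interfere with each other, and $x(0)\, x(S)\, v'\, \delta$ is locally admissible. The map has multiplicity at most $|\A|^{|N_0 \setminus S|+1} \le |\A|^{2d+1}$: given the image $(x(0), v')$, the free data is the center letter $a$ together with the restriction of $v$ to $N_0 \setminus S$.

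Finally, only edges incident to $0$ or to some $u \in N_0 \setminus S$ are affected by the correction, so at most $2d + (2d-1)\cdot 2d \le 4d^2$ edges change; and since $\Phi$ takes finite values in some interval $[m, M]$ on admissible edges, we obtain
\[
\sum_{a, v} e^{-U^{\Phi}(a x(S) v \delta)} \le |\A|^{2d+1}\, e^{4d^2 (M-m)} \sum_{v'} e^{-U^{\Phi}(x(0) x(S) v' \delta)},
\]
which gives $\mu(x(0) \ | \ x(S), \delta) \ge |\A|^{-(2d+1)} e^{-4d^2 (M-m)}$ uniformly in admissible $\delta$, hence the same bound for $p_{\mu,S}(x)$ and thus $c_\mu > 0$. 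The main obstacle is the correction step: SSF must be invoked precisely to produce $v'(u)$ compatible with both the new center value $x(0)$ and the unchanged values on $N_u \setminus \{0\}$, and the pairwise non-adjacency of sites in $N_0$ is what lets the argument proceed independently at each $u$ without any further coupling.
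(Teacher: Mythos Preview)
Your proof is correct. Both arguments rest on the same combinatorial facts---the sites of $N_0$ are pairwise non-adjacent, and for $u \in N_0$ one has $N_u \cap N_0 = \varnothing$ with $0 \in N_u$---so that SSF can be invoked independently at each $u \in N_0 \setminus S$ to produce a fill compatible with $x(0)$ at the origin and with the unchanged values on $N_u \setminus \{0\}$. The executions differ, however. The paper never passes to a large box: it picks a single witness configuration $v$ on $V = \partial(\{0\} \cup (N_0 \setminus S)) \setminus S$ with $\mu(v \mid x(S)) \ge |\A|^{-|V|}$, uses SSF once to find $u$ on $N_0 \setminus S$ with $x(0)x(S)uv \in \L(X)$, applies the MRF property to reduce the conditioning to the finite boundary $S' \cup V$, and bounds that Gibbs probability directly. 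You instead condition on the boundary of a large cube and compare numerator and denominator of the Gibbs ratio term by term via a many-to-one ``surgery'' map on configurations. Your route avoids the choice of a specific witness $v$ and yields the sharper multiplicity factor $|\A|^{2d+1}$ (versus the paper's $|\A|^{|N_0|+|N_1|+1}$); the paper's route is shorter and does not require the auxiliary box or the averaging over~$\delta$.
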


\begin{proof}

We will show in fact that
$$
\inf_{x \in \supp(\mu), ~ S \subset \zz^d \setminus \{0\}, ~ |S| <
\infty} ~ p_{\mu, S}(x) > 0.
$$

Recall that $N_0$ denotes the set of nearest neighbors of 0. Let $S
\subset \zz^d \setminus \{0\}$ be a finite set. Let
\begin{itemize}
\item
$U = N_0 \setminus S$
\item
$V = (\partial (\{0\} \cup U)) \setminus S$
\item
$S' = (\partial (\{0\} \cup U)) \cap S$
\end{itemize}

In particular, $\partial (\{0\} \cup U)$ is the disjoint union of
$V$ and $S'$.

Let $x \in X$. There exists $v \in \A^V$ such that
$$
\mu(v ~ | ~ x(S)) \ge |\A|^{-|V|}.
$$
Let $L$ and $\ell$ be upper and lower bounds on finite values of
$\Phi$. Since any locally admissible configuration is globally
admissible, there exists $u \in \A^U$ such that $x(0)x(S)uv \in
\L(X)$. Thus,
\begin{multline*}
p_{\mu,S}(x) \geq
\mu\left(y(0) = x(0), y(U) = u \ | \ y(S) = x(S)\right) \geq\\
\mu\left(y(0) = x(0), y(U) = u \ | \ y(S) = x(S), y(V) = v\right)
\mu(y(V) = v ~ | ~ y(S) = x(S)) = \\
\mu\left(y(0) = x(0), y(U) = u \ | \ y(S') = x(S'),
y(V) = v\right) \mu(y(V) = v ~ | ~ y(S) = x(S)) \ge  \\
|\A|^{-|U| - 1}e^{(4d^2)(\ell - L)} |\A|^{-|V|} \ge e^{(4d^2)(\ell -
L)} |\A|^{-|N_0| - |N_1| - 1}
\end{multline*}
where $N_1$ is the set of nearest neighbors of $N_0$, other than 0.

Since this lower bound is positive and independent of $x$ and $S$,
we are done.
\end{proof}

The preceding result applies to $X =  C^{(2)}_k$ for any $k \ge 5$.
In contrast we have the following result, for which we need the
following configuration.
Let $x^* \in \{0,1,2\}^{\zz^2}$ be defined
by:
\[
x^*(v) = \begin{cases} v_1 + v_2 \pmod 3  \textrm{ if } v \in
\mathcal{P}
\\ 1 + v_1 + v_2 \pmod 3 \textrm{ if } v \notin \PP
\end{cases}.
\]
We claim that that $x^* \in C^{(2)}_3$. To see this, we must show that
for all $v$ and  $i = 1, 2$, $x^*(v + e_i) \ne x^*(v)$. Clearly this
holds when $v, v+e_i \in \mathcal{P}$ or $v, v+e_i \notin \mathcal{P}$ by the
individual piecewise formulas. When $v \in \mathcal{P}$ and $v + e_i
\not\in \mathcal{P}$, we have $x^*(v + e_i) = 2 + x^*(v)
\ne x^*(v) \pmod 3$.

\begin{proposition}
Let $X =  C^{(2)}_3$ and $\mu$ be a shift-invariant measure with
$\supp(\mu) = X$. Then $c_{\mu} = 0$.
In fact, for any shift-invariant measure $\nu$ such that
$x^* \in \supp(\nu) \subseteq \supp(\mu)$, we have $c_{\mu,\nu} = 0$.
%$\inf_{x \in  X, n \in \mN}~p_{\mu,n}(x) = 0$.
\end{proposition}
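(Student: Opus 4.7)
The plan is to use $x = x^*$ and conditioning sets $S_M = \mathcal{P}_M$, and show that $p_{\mu, \mathcal{P}_M}(x^*) \to 0$ as $M \to \infty$, which immediately gives $c_{\mu,\nu} \le c_\mu \le 0$. The first step is to analyze local forcing: because $x^*|_{\mathcal{P}_M} = \tilde{x}|_{\mathcal{P}_M}$ (where $\tilde{x}(v) = v_1 + v_2 \bmod 3$), conditioning on $y|_{\mathcal{P}_M} = \tilde{x}|_{\mathcal{P}_M}$ forces $y(-1, 0) = y(0, -1) = 2$, hence $y(0,0) \in \{0,1\}$. Setting $y(0,0) = 1 = x^*(0)$ then propagates rigidly along row $0$: since $y(k,-1) = k-1 \bmod 3$ and $y(k,0)$ must differ from both $y(k,-1)$ and $y(k-1,0)$, an induction on $k \in [0,M]$ forces $y(k,0) = k + 1 \bmod 3$. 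Letting $E_0 := \{y : y(\mathcal{P}_M) = \tilde{x}|_{\mathcal{P}_M},\ y(0) = 1\}$, we thus have $E_0 \subseteq [x^*|_{\mathcal{P}_M \cup ([0,M] \times \{0\})}]$.

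The second step is a shift-invariance disjointness argument to bound $\mu(E_0)$. Let $L := \{v \in \mathbb{Z}^2 : v_1 + v_2 \equiv 0 \bmod 3\}$ be the period lattice of $\tilde{x}$; it has index $3$ in $\mathbb{Z}^2$. For $v \in L$, the shifted event $\sigma_v E_0$ imposes the standard pattern on $\mathcal{P}_M - v$ (with unchanged values, by the period property) and the ``defect'' $y(-v) = 1$ at the shifted origin. For distinct $v, v' \in L \cap B_{M/2}$, I claim $\sigma_v E_0 \cap \sigma_{v'} E_0 = \varnothing$: by interchanging the roles of $v$ and $v'$, we may assume $v' - v \in \mathcal{P}_M$, in which case $\sigma_{v'} E_0$ conditions the site $-v$ (which lies in $\mathcal{P}_M - v'$) to the standard value $\tilde{x}(v' - v) \equiv 0 \bmod 3$, conflicting with $\sigma_v E_0$'s requirement $y(-v) = 1$. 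Since $|L \cap B_{M/2}| = \Theta(M^2)$ and by shift-invariance each shifted event has measure $\mu(E_0)$, summing gives $\mu(E_0) \le O(1/M^2)$.

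The third step is to pass from $\mu(E_0)$ to the ratio $p_{\mu, \mathcal{P}_M}(x^*) = \mu(E_0)/q_M$ where $q_M = \mu(y(\mathcal{P}_M) = \tilde{x}|_{\mathcal{P}_M})$. The cleanest route is the combinatorial decomposition $\{y : y(\mathcal{P}_M) = \tilde{x}|_{\mathcal{P}_M}\} = \bigsqcup_{k=0}^{M+1} E_k$, where $E_k$ is the ``row-$0$ switches from low to high at position $k$'' sub-event (with $E_0$ being our ``all-high'' case and $E_{M+1}$ the ``all-low'' case, i.e.\ the pure $\tilde{x}$ cylinder). The analog of step two establishes $\mu(E_k) = O(1/M^2)$ for every $k \in [0, M]$ involving a genuine defect, while $\mu(E_{M+1}) = \mu([\tilde{x}|_{\mathcal{P}_M \cup ([0,M] \times \{0\})}])$ does not admit a corresponding upper bound (its shifts by periods are compatible rather than disjoint). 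Together these give $q_M \ge \mu(E_{M+1})$, and the conclusion $p_{\mu, \mathcal{P}_M}(x^*) \to 0$ follows provided $\mu(E_{M+1})$ decays strictly slower than $1/M^2$.

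The main obstacle is exactly this last point: establishing the lower bound on $q_M$ for \emph{every} shift-invariant $\mu$ with $\supp(\mu) = X$, without any mixing, ergodicity, or Gibbs assumption. I anticipate resolving it either by a direct domination $\mu(E_0) \le \mu(E_k)$ for each $k \in [1, M+1]$ (which would upgrade to $p_{\mu, \mathcal{P}_M}(x^*) \le 1/(M+2)$ via the decomposition above), or by a sharper two-dimensional shift argument exploiting that, unlike the defective events, the ``standard'' cylinder $E_{M+1}$ is stable under the lattice $L$ of period shifts and therefore its measure cannot collapse too quickly relative to the $O(1/M^2)$ bound on $\mu(E_0)$.
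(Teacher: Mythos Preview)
Your first two steps are sound: the forcing along row $0$ is correct, and the disjointness argument does yield $\mu(E_0)=O(1/M^2)$. The genuine gap is Step~3. To pass from $\mu(E_0)$ to the conditional probability $p_{\mu,\PP_M}(x^*)=\mu(E_0)/q_M$ you need a lower bound on $q_M=\mu([\tilde{x}|_{\PP_M}])$, and for an \emph{arbitrary} shift-invariant $\mu$ with $\supp(\mu)=X$ there is none available: $q_M$ can decay exponentially in $M^2$, much faster than your numerator bound. Neither of your proposed fixes works in general. The inequality $\mu(E_0)\le\mu(E_k)$ has no reason to hold without a Gibbs or Markov hypothesis, and the ``stability under $L$'' observation about $E_{M+1}$ only tells you its period-shifts are not disjoint, which gives no lower bound on its measure.

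The paper avoids this obstacle by never dividing by an unconditioned cylinder probability. The key realization is that $c_{\mu,\nu}$ is an \emph{infimum} over all $x\in\supp(\nu)$ and finite $S\subset\PP$, so you need not exhibit a single sequence $p_{\mu,\PP_M}(x^*)\to 0$; it suffices to show that assuming $c_{\mu,\nu}\ge c>0$ leads to a contradiction. The paper fixes a single conditioning block $W\subset\PP$ (a large square in the lower-right quadrant) and constructs, for many horizontal shifts $x^i$ of $x^*$, events specifying two sites in row $0$ so that (i) each such two-site conditional probability given $x^*(W)$ factors as a product of two values of the form $p_{\mu,S}(\sigma_w x^*)$ with $S\subset\PP$, hence is $\ge c_{\mu,\nu}^2$, and (ii) the resulting cylinders are pairwise disjoint by the same row-$0$ propagation you identified. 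Since all these events are conditioned on the \emph{same} set $[x^*(W)]$, their conditional probabilities sum to at most $1$, yielding $(n-1)c_{\mu,\nu}^2\le 1$ for all $n$. Working with conditional probabilities on a common conditioning event is exactly what lets you sum to $1$ without ever needing a lower bound on $\mu([x^*(W)])$.
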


\begin{proof}

Fix any $n$, and let $W_{3n} = [1,3n]\times[-3n, -1]$.
For $1 \le i \le n-1$, define $x^i = \sigma_{(-3i,0)} x^*$ and note that
$$
x^i(W_{3n}) = x^*(W_{3n}), ~x^i(3i-1,0) = 2, \textrm{ and } x^i(3i,0) = 1.
$$
%\textcolor{red}{(For instance, for $i = 2 \pmod 3$, we can take $x^i = \sigma_{(-(i+1),0)} x^*$, and
%for other values of $i$ we need only shift $x^*$ similarly and add a constant to every
%site $\pmod 3$.)}

We now show that the sets $[x^i(W_{3n} \cup \{(3i-1,0),
(3i,0)\})]$, for $1 \le i \le n-1$, are disjoint.
Choose $1 \leq i < i' \leq n-1$. Let $x \in [x^i(W_{3n} \cup
\{(3i-1,0),(3i,0)\})]$. We claim that for $3i \le j \le 3i'$,
$x(j,0) = j+1 \pmod 3$. To see this, we argue by induction. For $j =
3i$, this is true by definition of $x^i$. Assume this is true for a
given $j$. Then $x(j+1,0) \ne x(j,0) = j + 1 \pmod 3$ and $x(j+1,0)
\ne x(j+1,-1) = x^*(j+1,-1) = j \pmod 3$. Thus, $x(j+1,0) = j+2
\pmod 3$, as desired, completing our proof by induction. But then
$x(3i',0) = 2$, and since $x^{i'}(3i',0) = 1$, $x \notin
[x^{i'}(W_{3n} \cup \{(3i'-1,0),(3i',0)\})]$, and
these cylinder sets are disjoint as claimed.

We now decompose $\mu(x^i(\{(3i-1,0), (3i,0)\}) ~|~ x^*(W_{3n}))$ as
$$
\mu(x^i(3i-1,0)~|~ x^*(W_{3n}))
\mu(x^i(3i,0) ~|~ x^*(W_{3n}),  x^i(3i-1,0)).
$$
Since $W_{3n} - (3i-1,0) \subseteq \PP_{3n}$ and $x^*(W_{3n}) = x^i(W_{3n})$,
the first factor can be rewritten as $\displaystyle p_{\mu,W_{3n} - (3i-1,0)}(\sigma_{(3i-1,0)} x^i)$.
Similarly, the second factor can be expressed as $\displaystyle p_{\mu,\{(-1,0)\} \cup (W_{3n} - (3i,0))}(\sigma_{(3i,0)} x^i)$.
Both are greater than or equal to $c_{\mu,\nu}$ by definition, so
$$
\mu(x^i(\{(3i-1,0), (3i,0)\}) ~|~ x^*(W_{3n})) \ge c_{\mu,\nu}^2.
$$
By disjointness of $\{[x^i(W_{3n} \cup \{(3i-1,0), (3i,0))\}]\}$, we obtain
$c_{\mu,\nu}^2(n-1) \le 1$. Since this is true for all $n$, $c_{\mu,\nu} = 0$.
\end{proof}

We remark that there is a fully-supported
nearest-neighbor uniform Gibbs measure  $\mu$ on $C^{(2)}_3$ (\cite{tom-nishant}),
and so the proposition applies to such a $\mu$ and $\nu=\mu$.

%It is clear from the proof that this result actually
%holds for any shift-invariant
%measure whose support contains the point $x^*$ (and therefore all $x^i$ as well.)

\subsection{D-condition}
\label{D-condition}
~

We will frequently make use of a topological mixing condition,
defined by Ruelle~\cite[Section 4.1]{Ruelle} and given in the
definition below. For this, we use the following notation. Let
$\Lambda_n$ be a sequence of finite sets. We write $\Lambda_n
\nearrow \infty$ if $\cup_n \Lambda_n = \zz^d$ and for each $v \in
\zz^d$,
$$
\lim_{n \rightarrow \infty} \frac{|\Lambda_n ~\Delta~ (\Lambda_n +
v)|}{|\Lambda_n|} = 0
$$
where $\Delta$ denotes symmetric difference.

\begin{definition}
An SFT $X$ satisfies the \textbf{D-condition} if there exist
sequences of finite subsets $(\Lambda_n)$, $(M_n)$ of $\mathbb{Z}^d$
such that $\Lambda_n \nearrow \infty$,  $\Lambda_n \subseteq M_n$,
$\frac{|M_n|}{|\Lambda_n|} \rightarrow 1$, and, for any $v \in
\L_{\Lambda_n}(X)$ and finite $S \subset M_n^c$ and $w \in \L_S(X)$,
$[v] \cap [w] \neq \varnothing$.
\end{definition}

Roughly speaking, this means that there exists an exhaustive sequence
of shapes, $\Lambda_n$, for which relatively few elements of
$\Lambda_n$ are near $\partial \Lambda_n$, and ``collars,''
($M_n \setminus \Lambda_n$),
around
the shapes  of comparatively small size
such that it is possible to ``fill in'' between any legal
configurations inside and outside the collar.

We will make use of the following property of a sequence $\Lambda_n
\nearrow \infty$.

\begin{lemma}
\label{vanHove} Let $\Lambda_n \nearrow \infty$. Given $n, N \in
\mN$, let
$$
\Lambda_n^N = \{v \in \Lambda_n: ~ v + \PP_N  \subset \Lambda_n\}.
$$
Then for fixed $N$,
$$
\lim_{n \rightarrow \infty} \frac{|\Lambda_n^N|}{|\Lambda_n|}  = 1.
$$
\end{lemma}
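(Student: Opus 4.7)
The plan is to bound the size of the ``bad'' set $\Lambda_n \setminus \Lambda_n^N$ using the van Hove property. First I would unpack the definition: a site $v \in \Lambda_n$ fails to lie in $\Lambda_n^N$ exactly when there exists some $p \in \PP_N$ with $v + p \notin \Lambda_n$, equivalently $v \in \Lambda_n \setminus (\Lambda_n - p)$. Hence
\[
\Lambda_n \setminus \Lambda_n^N \;=\; \bigcup_{p \in \PP_N}\bigl(\Lambda_n \setminus (\Lambda_n - p)\bigr).
\]

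Next, for each individual $p \in \PP_N$ I would observe that $\Lambda_n \setminus (\Lambda_n - p) \subseteq \Lambda_n \,\Delta\, (\Lambda_n - p)$, so
\[
\frac{|\Lambda_n \setminus (\Lambda_n - p)|}{|\Lambda_n|} \;\le\; \frac{|\Lambda_n \,\Delta\, (\Lambda_n - p)|}{|\Lambda_n|},
\]
and the right-hand side tends to $0$ as $n \to \infty$ by the hypothesis $\Lambda_n \nearrow \infty$ (applied with vector $-p$, noting the definition is symmetric under $v \mapsto -v$).

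Since $\PP_N = B_N \cap \PP$ is a fixed finite set (its cardinality depends only on $N$ and $d$, not on $n$), a finite union bound gives
\[
\frac{|\Lambda_n \setminus \Lambda_n^N|}{|\Lambda_n|} \;\le\; \sum_{p \in \PP_N} \frac{|\Lambda_n \,\Delta\, (\Lambda_n - p)|}{|\Lambda_n|} \;\longrightarrow\; 0.
\]
Therefore $|\Lambda_n^N|/|\Lambda_n| = 1 - |\Lambda_n \setminus \Lambda_n^N|/|\Lambda_n| \to 1$, as desired.

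There is no real obstacle here; this is essentially the standard reformulation of the van Hove property in terms of ``interior'' points. The only mild care needed is to make sure the ``shift direction'' in the van Hove hypothesis matches the shift appearing in $\Lambda_n - p$, which is a non-issue since the symmetric difference is symmetric under negation of the translation vector, and that $\PP_N$ is treated as a fixed finite set once $N$ is fixed so that the finite union bound is legitimate.
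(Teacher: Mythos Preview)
Your proof is correct and is essentially identical to the paper's own argument: both observe that $\Lambda_n\setminus\Lambda_n^N\subseteq\bigcup_{p\in\PP_N}(\Lambda_n\setminus(\Lambda_n-p))$ and then use the van Hove condition together with the finiteness of $\PP_N$ to conclude. The only cosmetic difference is that you pass through the symmetric difference $\Lambda_n\,\Delta\,(\Lambda_n-p)$ explicitly, while the paper applies the hypothesis directly to $|\Lambda_n\setminus(\Lambda_n-p)|$.
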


\begin{proof}
If $v \in \Lambda_n$ and $v + \PP_N \not\subset \Lambda_n$, then for
some $w \in \PP_N$, $v \in \Lambda_n \setminus (\Lambda_n - w)$.  Thus,
$$
\frac{|\Lambda_n \setminus \Lambda_n^N|}{|\Lambda_n|}  \le
\frac{\sum_{w \in \PP_N}~ |\Lambda_n \setminus (\Lambda_n - w)|}
{|\Lambda_n|}.
$$
But the right hand side tends to 0 as $n \rightarrow \infty$.
\end{proof}

For most known examples, one can choose $\Lambda_n$ to be
rectangular prisms, or cubes. This motivates the following
variation:

\begin{definition}
An SFT $X$ satisfies the \textbf{block D-condition} if there exists
a sequence of integers $(R_n)$ such that $\frac{R_n}{n} \rightarrow
0$ and for any rectangular prism $B = \prod_{i=1}^d [-n_i, n_i]$,
any integers $m_i \geq R_{n_i}$, and any $w \in \L_{\partial
\left(\prod_{i=1}^d [-(n_i + m_i), n_i + m_i]\right)}(X)$ and $v \in
\L_B(X)$, $[v] \cap [w] \neq \varnothing$.
\end{definition}

For a nearest-neighbor SFT, the block D-condition can be expressed in
the following equivalent
form.

\begin{lemma}
\label{blockD}
A nearest-neighbor SFT $X$ satisfies the block
D-condition if and only if there exists a sequence of integers
$(R_n)$ such that $\frac{R_n}{n} \rightarrow 0$ and for any
rectangular prism $B = \prod_{i=1}^d [-n_i, n_i]$, any integers $m_i
\geq R_{n_i}$, any finite set $S \subset \left(\prod_{i=1}^d [-(n_i
+ m_i), n_i + m_i]\right)^c$, if $w \in \L_S(X)$ and $v \in
\L_B(X)$, then $[v] \cap [w] \neq \varnothing$.
\end{lemma}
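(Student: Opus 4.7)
The plan is to prove both implications, noting that the condition in the lemma is a priori stronger than the block D-condition (which only requires filling between $v$ and boundary configurations on $\partial P$, where $P := \prod_{i=1}^d[-(n_i+m_i), n_i+m_i]$ denotes the outer prism). The ($\Leftarrow$) direction is essentially definitional, while the ($\Rightarrow$) direction requires the nearest-neighbor hypothesis in order to splice globally admissible configurations across $\partial P$.

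For ($\Leftarrow$), I would specialize $S$ in the hypothesis to $\partial P$, which is a finite subset of $P^c$ by the definition of boundary given in Section~\ref{subshifts}. Every $w \in \L_{\partial P}(X)$ fits the hypothesis, so the conclusion $[v] \cap [w] \neq \varnothing$ recovers the block D-condition verbatim.

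For ($\Rightarrow$), assume the block D-condition with rate $(R_n)$, and fix $v \in \L_B(X)$, $m_i \geq R_{n_i}$, a finite $S \subset P^c$, and $w \in \L_S(X)$. Choose $x \in X$ with $x(S) = w$ and set $w' := x(\partial P) \in \L_{\partial P}(X)$. The block D-condition applied to $v$ and $w'$ then yields $y \in X$ with $y(B) = v$ and $y(\partial P) = w'$. Define the spliced configuration $z$ by $z|_P := y|_P$ and $z|_{P^c} := x|_{P^c}$; then $z(B) = v$ since $B \subset P$, and $z(S) = w$ since $S \subset P^c$. It remains to verify $z \in X$. Since $X$ is a nearest-neighbor SFT, admissibility is checked on edges: edges inside $P$ are fine because $y \in X$; edges inside $P^c$ are fine because $x \in X$; and any edge from $u \in P$ to $u' \in P^c$ necessarily has $u' \in \partial P$ by the definition of boundary, on which $y$ and $x$ both take the value $w'(u')$, so the edge values $(z(u), z(u')) = (y(u), y(u'))$ are admissible.

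The only delicate point is the consistency check across $\partial P$, and this is exactly what the nearest-neighbor hypothesis is designed to handle: agreement on $\partial P$ alone is enough to guarantee admissibility of the spliced configuration. For SFTs with longer-range forbidden patterns this step would genuinely fail, which is why the lemma is restricted to the nearest-neighbor setting. Beyond this observation, no delicate estimates are required.
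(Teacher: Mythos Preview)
Your proof is correct and follows essentially the same approach as the paper's: extend $w$ to the boundary $\partial P$ via a global extension, apply the block D-condition to $v$ and this boundary configuration, and then splice across $\partial P$ using the nearest-neighbor hypothesis. The paper's proof is terser (it omits the explicit edge-by-edge verification you give), but the argument is the same.
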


\begin{proof}
Let $T =\prod_{i=1}^d [-(n_i + m_i), n_i + m_i]$.  If $w\in \L_S(X)$
for such a set $S$, then we can extend $w$ to a globally admissible
configuration $w'$ on $S \cup
\partial T$. Let $w'' = w'(\partial T)$.  By the block D-condition
$[v]\cap[w''] \ne \varnothing$. Since $X$ is a nearest-neighbor SFT,
it follows that $[v]\cap[w'] \ne \varnothing$ and thus $[v]\cap[w]
\ne \varnothing$.
\end{proof}

It follows, by choosing $\Lambda_n = \prod_{i=1}^d [-n,n]$ and $M_n
= \prod_{i=1}^d [-(n + R_n), n + R_n]$, that for a
nearest-neighbor SFT, the block D-condition
does indeed imply the D-condition, and this implication can be easily
generalized to any SFT. To distinguish between these
definitions we sometimes refer to the D-condition as the classical
D-condition.

For Theorem~\ref{easythm}, we will
assume the classical D-condition. For Theorem~\ref{hardthm}, we will
need the block D-condition. However, we are not aware of any example
which satisfies the classical D-condition and not the block
D-condition; in fact in some works (e.g., \cite{tom}) the
D-condition is stated with the assumption that the sets $\Lambda_n$
and $M_n$ are cubes or rectangular blocks.

We will make use of  the following result that is well known under a
much weaker hypothesis than the D-condition
(\cite[Remark 1.14]{Ruelle}). We give a proof for completeness.

\begin{proposition}
\label{full}  If $\Phi$ is a nearest-neighbor interaction and
$X_\Phi$ satisfies the $D$-condition, then for any Gibbs measure
$\mu$ for $\Phi$, $\supp(\mu) = X_\Phi$.
\end{proposition}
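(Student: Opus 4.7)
The plan is to prove the nontrivial inclusion $X_\Phi \subseteq \supp(\mu)$, i.e.\ that every globally admissible finite configuration of $X_\Phi$ has positive $\mu$-measure. So fix a finite $F \subset \zz^d$ and $w \in \L_F(X_\Phi)$; I want $\mu(w) > 0$. Using the D-condition, first I would translate $F$ into one of the ``good'' shapes: by the F\o{}lner property of $(\Lambda_n)$, for each $v \in F$ we have $|\Lambda_n \Delta (\Lambda_n - v)|/|\Lambda_n| \to 0$, so $\bigcap_{v \in F}(\Lambda_n - v) \neq \varnothing$ for large $n$, which means some translate of $F$ is contained in $\Lambda_n$. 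By shift-invariance of $\mu$ I may assume $F \subseteq \Lambda_n$. Since $w$ is globally admissible in $X_\Phi$, extend it to some $v \in \L_{\Lambda_n}(X_\Phi)$.

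Next, since $\mu$ is a probability measure, there exists $\delta \in \A^{\partial M_n}$ with $\mu(\delta) > 0$; because $\supp(\mu) \subseteq X_\Phi$, automatically $\delta \in \L_{\partial M_n}(X_\Phi)$. Now apply the D-condition with the finite set $S = \partial M_n \subseteq M_n^c$ and configurations $v \in \L_{\Lambda_n}(X_\Phi)$, $\delta \in \L_{\partial M_n}(X_\Phi)$: this produces some $x \in X_\Phi$ with $x(\Lambda_n) = v$ and $x(\partial M_n) = \delta$. Setting $w' = x(M_n)$, every edge inside $M_n \cup \partial M_n$ carries a finite $\Phi$-value because $x \in X_\Phi$, so $U^\Phi(w'\delta) < \infty$, i.e.\ $w'\delta$ is locally admissible.

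Finally, apply the Gibbs property with $S = M_n$ and boundary configuration $\delta$. The hypothesis $\mu(\delta) > 0$ gives $Z^{\Phi,\delta}(M_n) \neq 0$ and
\[
\mu^\delta(w') \;=\; \frac{e^{-U^\Phi(w'\delta)}}{Z^{\Phi,\delta}(M_n)} \;>\; 0,
\]
whence $\mu(w' \cap \delta) = \mu^\delta(w')\,\mu(\delta) > 0$. Since $F \subseteq \Lambda_n \subseteq M_n$ and $w'(F) = v(F) = w$, as cylinders $[w'] \cap [\delta] \subseteq [w'] \subseteq [w]$, so $\mu(w) \geq \mu(w' \cap \delta) > 0$, as desired.

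The only mildly delicate step is the translation argument making $F \subset \Lambda_n$ from the F\o{}lner/exhaustion properties packaged into $\Lambda_n \nearrow \infty$; once the relevant configurations sit inside the D-condition geometry, the D-condition and the explicit Gibbs formula combine almost mechanically to yield positivity.
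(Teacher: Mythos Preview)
Your proof is correct and follows essentially the same approach as the paper's: pick a boundary configuration $\delta$ of positive $\mu$-measure, use the D-condition to see that $w$ (or an extension of it) is compatible with $\delta$, and then invoke the Gibbs formula to get positivity of the conditional and hence of $\mu(w)$. The only difference is that you are more explicit about fitting $F$ into some $\Lambda_n$ via the F\o{}lner property and shift-invariance, a step the paper compresses into the single phrase ``there exists a finite set $T$ containing $S$.''
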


\begin{proof}
As mentioned earlier, $\supp(\mu) \subseteq X_\Phi$.  Let $X =
X_\Phi$.

Let $S$ be a finite subset of $\zz^d$ and $w \in \L_S(X)$.  By the
$D$-condition there exists a finite set $T$ containing $S$ such that
for any $\delta \in \L_{\partial T}(X)$, we have $w\delta \in
\L(X)$. Since there exists some $\delta \in \A^{\partial T}$ such
that $\mu(\delta) > 0$ and since  $\supp(\mu) \subseteq X$, we have
$\delta \in \L_{\partial T}(X)$,  Thus, $w\delta \in \L(X)$ and so
$\mu(w|\delta) > 0$. Since
$$
\mu(w) = \sum_{\eta \in \A^{\partial T}:~ \mu(\eta) > 0} ~ \mu(w
| \eta)\mu(\eta)
$$
we have $\mu(w) > 0$.
\end{proof}

\begin{proposition}
\label{SSF_D}
Any $\zz^d$ SFT that satisfies SSF must satisfy the
block D-condition.
\end{proposition}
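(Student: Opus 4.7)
The plan is to show that, for a nearest-neighbor SFT satisfying SSF, the block D-condition holds with the very simple choice $R_n = 1$ for all $n$ (so $R_n/n \to 0$ trivially). I will use Lemma~\ref{blockD}, so it suffices to show that for any rectangular prism $B = \prod_{i=1}^d [-n_i,n_i]$, any integers $m_i \ge 1$, any finite $S \subset \left(\prod_{i=1}^d [-(n_i+m_i), n_i+m_i]\right)^c$, and any $v \in \L_B(X)$, $w \in \L_S(X)$, the cylinders $[v]$ and $[w]$ intersect.

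The key geometric observation is that $B$ and $S$ have $L^1$ distance at least $2$. Indeed, writing $T = \prod_{i=1}^d [-(n_i+m_i), n_i+m_i]$ and picking any $s \in S \subset T^c$, there is some coordinate $i$ with $|s_i| \ge n_i + m_i + 1$, and then for any $b \in B$ we have $|s-b| \ge |s_i - b_i| \ge m_i + 1 \ge 2$. Consequently, no edge of $\zz^d$ has one endpoint in $B$ and the other in $S$.

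Now consider the concatenation $vw$ on $B \cup S$. Since both $v$ and $w$ are globally admissible, they are in particular locally admissible (for the nearest-neighbor forbidden list). Because there are no edges between $B$ and $S$, every edge in $B \cup S$ lies entirely in $B$ or entirely in $S$, so no forbidden nearest-neighbor pattern can occur in $vw$. Therefore $vw$ is locally admissible on $B \cup S$.

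By the equivalent characterization of SSF recorded in the paragraph following the definition (every locally admissible configuration is globally admissible), $vw$ extends to a point $x \in X$, which witnesses $[v] \cap [w] \neq \varnothing$. Applying Lemma~\ref{blockD} with $R_n \equiv 1$ completes the proof. There is no real obstacle here; the only mildly delicate point is confirming that with $m_i \ge 1$ the sets $B$ and $S$ are separated enough that no nearest-neighbor edge crosses between them, which is exactly what the distance computation above gives.
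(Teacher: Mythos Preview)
Your proof is correct and follows essentially the same idea as the paper: take $R_n \equiv 1$, observe that with $m_i \ge 1$ no edge of $\mathbb{Z}^d$ joins $B$ to the outer configuration, conclude that the concatenation $vw$ is locally admissible, and invoke SSF to upgrade this to global admissibility. The only cosmetic difference is that the paper verifies the block D-condition directly (taking $w$ on the boundary $\partial\!\left(\prod_i[-(n_i+1),n_i+1]\right)$), whereas you pass through the equivalent formulation of Lemma~\ref{blockD} with an arbitrary finite $S$ outside the enlarged box; since the direction of Lemma~\ref{blockD} you use is the trivial one, this makes no substantive difference.
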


\begin{proof}
It is clear that the block D-condition is satisfied with each $m_i = 1$,
since for any dimensions $n_i$, $1 \leq i \leq d$, and configurations
$w \in \L_{\partial{\prod_{i=1}^d [-(n_i + 1), n_i + 1]}}(X)$ and
$v \in \L_{\prod_{i=1}^d [-n_i, n_i]}(X)$, the concatenation $vw$ is locally
admissible, therefore globally admissible by SSF, and so $[v] \cap [w] \neq \varnothing$.
\end{proof}

Recall that $d = 2$ and $k \ge 5$, the $k$-checkerboard SFT satisfies SSF
and therefore satisfies the block-D condition. On the other hand,
the 3-checkerboard SFT does not even satisfy the
classical D-condition.
This follows from the existence of so-called frozen points,
as shown in~\cite[p. 253]{Schmidt1}.  We give a proof for completeness.

\begin{proposition}\label{noD}
The $3$-checkerboard SFT does not satisfy the classical D-condition for
any $d \ge 2$.
\end{proposition}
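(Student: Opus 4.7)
The strategy is to argue by contradiction. Suppose $X = C^{(d)}_3$ satisfies the classical D-condition via sequences $(\Lambda_n)$, $(M_n)$. I would produce, for sufficiently large $n$, a configuration $v \in \L_{\Lambda_n}(X)$ and a configuration $w \in \L_S(X)$ with $S \subset M_n^c$ such that $[v] \cap [w] = \varnothing$, contradicting the gluing requirement of the D-condition.

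First I would reduce to the case $d = 2$: the embedding $\tilde{x}(v_1, \ldots, v_d) := \bigl(x(v_1, v_2) + v_3 + \cdots + v_d\bigr) \bmod 3$ sends $C^{(2)}_3$ into $C^{(d)}_3$ in a way that preserves rigidity properties of finite seeds, so an obstruction in dimension $2$ transfers to all $d \ge 2$.

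The key input is the existence of frozen points in $C^{(2)}_3$, as established in~\cite[p.~253]{Schmidt1}. A preliminary elementary observation motivates what is needed: if rows $j$ and $j+2$ of a 3-coloring both agree with the periodic pattern $y_0(v) := (v_1 + v_2) \bmod 3$, then row $j+1$ is forced to agree with $y_0$ as well, since each middle-row site must differ from the two fixed colors directly above and below, leaving only one option. Propagating this observation from corners inward shows that fixing the outer boundary $\partial B$ of a finite rectangle $B$ to $y_0|_{\partial B}$ forces the interior to $y_0|_B$. Schmidt's frozen-points construction strengthens this to: there is a finite, globally admissible seed $v^\dagger$ on a shape $F^\dagger \subset \zz^2$ such that every element of $X$ extending $v^\dagger$ agrees with $y_0$ on an unbounded set $R \subset \zz^2 \setminus F^\dagger$ (e.g., an infinite ray or wedge). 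Granting this, the contradiction unfolds cleanly: since $\Lambda_n \nearrow \infty$, for large $n$ a translate of $F^\dagger$ lies inside $\Lambda_n$, so I take $v \in \L_{\Lambda_n}(X)$ to be any globally admissible extension of the translated seed. The corresponding translate of $R$ is unbounded, hence meets $M_n^c$ at some site $s$; set $w(s) \in \{0,1,2\}$ to differ from $y_0(s)$. Every single symbol is in $\L_{\{s\}}(X)$, so $w$ is admissible, yet no $y \in X$ satisfies both $y|_{\Lambda_n} = v$ and $y(s) = w(s)$, contradicting the D-condition applied to $S = \{s\}$.

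The hard part will be the frozen-seed rigidity itself---that a finite configuration in $C^{(2)}_3$ can force an unbounded portion of the coloring. The elementary ``sandwich'' argument above only forces finite interiors of rectangles whose boundaries have been explicitly fixed; propagating the forcing along an unbounded ray or wedge from a finite seed is a genuinely nontrivial geometric construction, and this is the technical core supplied by~\cite{Schmidt1}. Once that rigidity is in hand, the rest of the argument is essentially combinatorial bookkeeping, valid for every choice of the sequences $(\Lambda_n)$, $(M_n)$ allowed by the D-condition.
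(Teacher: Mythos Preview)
Your proposal has a genuine gap: the direction of forcing is reversed. The ``frozen'' property of $y_0(v) = (v_1 + v_2) \bmod 3$ in $C^{(2)}_3$ says that if a configuration agrees with $y_0$ on the \emph{complement} of a finite set, it must equal $y_0$ everywhere; equivalently, the boundary pattern $y_0|_{\partial S}$ forces the interior $S$. It does \emph{not} say that a finite seed forces an unbounded exterior region. In fact no finite seed in $C^{(2)}_3$ can force the value at any site far away: the paper itself notes (in the example following Theorem~\ref{easythm}) that $C^{(2)}_3$ is topologically mixing, which precisely means that any two globally admissible finite configurations on shapes sufficiently far apart are jointly realizable. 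This immediately rules out the existence of your seed $v^\dagger$ forcing values on an unbounded set $R$. Concretely, if the seed sits inside $\{v_2 \le N\}$, the coloring $y(v) = (v_1 + v_2)\bmod 3$ for $v_2 \le N$ and $y(v) = (v_1 + 2N - v_2)\bmod 3$ for $v_2 > N$ extends the seed but differs from $y_0$ throughout $\{v_2 > N\}$; symmetric reflections handle the other half-planes.

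The paper's argument works the other way around. One shows directly (by looking at the top-left corner of the putative discrepancy set) that $y_0$ is frozen: $y_0|_{\partial S}$ has a unique completion on $S$. To defeat the D-condition, take any finite cube $T \supset M_n$, set the \emph{outer} configuration $w = y_0|_{\partial T}$ on $\partial T \subset M_n^c$, and take the \emph{inner} configuration $v \in \L_{\Lambda_n}(X)$ to be anything other than $y_0|_{\Lambda_n}$ (e.g.\ a $\{0,1\}$-checkerboard on a subcube). Any $x \in [w]$ is forced to equal $y_0$ on all of $T \supset \Lambda_n$, so $[v] \cap [w] = \varnothing$. You should rewrite your argument with $w$ carrying the rigid pattern on the outside and $v$ free on the inside; once you do, the frozen-point lemma you need is exactly the inward-forcing statement, and its proof is the elementary corner argument in the paper rather than anything requiring the full strength of Schmidt's construction.
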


\begin{proof}
For any $d$, let $x_d$ be defined by $x_d(v) = \left(\sum_i~ v_i\right)
\pmod 3$ for all $v \in \mathbb{Z}^d$.  Then $x_d \in C^{(d)}_3$.

We will show that there are no
points $x \in C^{(d)}_3$, $x \ne x_d$, which agree with $x_d$
on all but finitely many sites.
(Such points are sometimes called {\bf frozen}.)

We argue for $d = 2$, which implies the same for all $d$
since the restriction of $x_d$ to any translate of
$\{v \in \zz^d: v_i = 0 \mbox{ for all } i > 2\}$
agrees with a shift of $x_2$

Suppose that $y \in C^{(2)}_3$ agrees with $x_2$ on the
complement of a finite set of sites.  Let $S$ be the set
of sites at which $y$ and $x_2$ disagree.
Consider the leftmost site
$v$ in the top row of $S$. Its neighbors $v - e_1$
and $v + e_2$ are in $S^c$,
and by definition of $x_2$,
$y(v - e_1) = x_2(v - e_1) \neq x_2(v + e_2) = y(v + e_2)$.
Therefore, there is only one legal choice for $y(v)$, which is $x_2(v)$,
contrary to the fact that $v \in S$.

In particular, this implies that for any $d$ and
finite $S \subseteq \mathbb{Z}^d$, any boundary configuration
$x_d(\partial S)$ has only one valid completion to all of $S \cup \partial S$,
which precludes the classical D-condition (for instance, one cannot ``fill in'' between
the configuration on any cube
which consists of alternating 0's and 1's and the restriction of
$z_d$ to the boundary of any larger cube).
\end{proof}

\subsection{Entropy, Pressure and Equilibrium States}
\label{pressure}
~

For a shift-invariant
measure $\mu$ on $\A^{\mathbb{Z}^d}$, we define its entropy as
follows.

\begin{definition}
The \textbf{measure-theoretic entropy} of a shift-invariant measure
$\mu$ on $\A^{\mathbb{Z}^d}$ is
%bhm2
defined by
\[
h(\mu)=\lim_{j_1, j_2, \ldots, j_d \rightarrow \infty} \frac{-1}{j_1
j_2 \cdots j_d} \sum_{w \in \A^{\prod_{i=1}^d [1,j_i]}} \mu(w)
\log(\mu(w)),
\]

\noindent where terms with $\mu(w) = 0$ are omitted.
\end{definition}

We define topological pressure for both interactions and functions
on a shift space $X$.  In order to discuss connections between these
viewpoints, we need a mechanism for turning an interaction (which is
a function on finite configurations) into a continuous function on
the infinite configurations in $X$. Following Ruelle, we do this as
follows for the special case of nearest-neighbor interactions
$\Phi$. Define for $x \in X_\Phi$
$$
A_{\Phi}(x) := -\sum_{i=1}^d \Phi(\{x_0, x_{e_i}\}).
$$

We now give the two definitions of topological pressure.

\begin{definition}
For a nearest-neighbor interaction $\Phi$
the \textbf{(topological) pressure} of $\Phi$ is defined as
\[
P(\Phi) = \lim_{n_1, \ldots, n_d \rightarrow \infty} \frac{1}{\prod n_i} \log Z^{\Phi}\left(\prod [1,n_i]\right).
\]
\end{definition}

It is well-known~\cite[Corollary 3.13]{Ruelle} that for any sequence
$\Lambda_n \nearrow \infty$,
\[
P(\Phi) = \lim_{n \rightarrow \infty} \frac{1}{|\Lambda_n|} \log
Z^{\Phi}\left(\Lambda_n\right).
\]

\begin{definition}
For any continuous real-valued function $f$
on a $\mathbb{Z}^d$ SFT $X$, the \textbf{(topological) pressure} of $f$ is defined as
\[
P(f) = \sup_{\mu}~ h(\mu) + \int f \ d \mu,
\]
where the supremum ranges over  all shift-invariant measures $\mu$
supported on $X$. Any $\mu$ achieving this supremum is called an
\textbf{equilibrium state} for $f$.

When $f = 0$, $P(f)$ is called the \textbf{topological entropy} $h(X)$ of $X$,
and any equilibrium state for $f$ is called a \textbf{measure of maximal entropy} for $X$.
\end{definition}

It is well-known
that for an irreducible nearest-neighbor $\zz$-SFT $X$, there is a
unique uniform Gibbs measure; this measure is the unique measure of
maximal entropy on $X$~\cite[Section 13.3]{LM} (which is an
irreducible  (first-order) Markov chain).

The celebrated Variational Principle
\cite{walters_var}~\cite{walters} implies that the definitions we
have given are equivalent in the sense that $P(\Phi) = P(A_{\Phi})$.
It is well-known that any continuous $f$ has at least one
equilibrium state~\cite{walters}.  And in the case that $X_\Phi$
satisfies the $D$-condition, a measure on $X_\Phi$ is a Gibbs
measure for $\Phi$ iff it is an equilibrium state
for $A_\Phi$~\cite{dob},~\cite{LR},~\cite[Theorem 4.2]{Ruelle}. We
will discuss connections, in a more general context, between Gibbs
states and equilibrium states in Section~\ref{connections}.

\section{Main results}\label{main}

\begin{theorem}\label{easythm}
If $\Phi$ is a nearest-neighbor interaction with underlying SFT $X
= X_\Phi$, $\mu$ is a Gibbs measure for $\Phi$, $\nu$ is a
shift-invariant measure with
$\supp(\nu) \subseteq X$, \\

\noindent {\rm (A1)} $X$ satisfies the classical D-condition,

\noindent {\rm (A2)} $\lim_{S \rightarrow \PP} ~p_{\mu, S}(x)=
p_\mu(x)$ uniformly over $x \in \supp(\nu)$, and

\noindent {\rm (A3)} $
c_{\mu,\nu} > 0$

%
%\noindent {\rm (A3)} \textcolor{red}{ $$
%c_{\mu,\nu} := \inf_{x \in \supp(\nu), ~ S \subset \PP, ~ |S| < \infty} ~
%p_{\mu,S}(x) > 0
%$$}

%$c_{\mu} > 0$,\\

\noindent then
$$
P(\Phi) = \int I_{\mu}(x) + A_{\Phi}(x) \ d\nu =
\int I_{\mu}(x) - \sum_{i=1}^d \Phi(x(\{0,e_i\})) \ d\nu.
$$

\end{theorem}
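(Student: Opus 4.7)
The plan is to compute $-\tfrac{1}{|\Lambda_n|} \int \log \mu(x(\Lambda_n))\, d\nu(x)$ along an exhausting sequence $\Lambda_n \nearrow \infty$ furnished by the $D$-condition (with associated collars $M_n$) in two different ways, and then equate the two expressions. Note first that by Proposition~\ref{full} the assumption $\supp(\nu) \subseteq X = X_\Phi$ gives $\supp(\nu) \subseteq \supp(\mu)$, so every conditional probability involved is well-defined, and $A_\Phi$ is bounded on $\supp(\nu)$ since $\Phi$ is finite on edges of $X_\Phi$.

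For the first (``entropy side'') computation, using (A2) and (A3), I telescope in lexicographic order:
\[
-\log \mu(x(\Lambda_n)) = \sum_{v \in \Lambda_n} -\log p_{\mu,(\Lambda_n)_{<v}-v}(\sigma_v x),
\]
where $(\Lambda_n)_{<v}-v \subset \PP$. For $v$ in the deep interior $\Lambda_n^N$ supplied by Lemma~\ref{vanHove}, this conditioning set contains $\PP_N$, so by (A2) it makes $p_{\mu,(\Lambda_n)_{<v}-v}(\sigma_v x)$ close to $p_\mu(\sigma_v x)$ uniformly on $\supp(\nu)$; (A3) provides the uniform lower bound $c_{\mu,\nu}>0$ so that the mean value theorem promotes this to uniform closeness of $-\log p_{\mu,(\Lambda_n)_{<v}-v}(\sigma_v x)$ to $I_\mu(\sigma_v x)$. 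Contributions from the negligible set $\Lambda_n \setminus \Lambda_n^N$ are uniformly capped by $-\log c_{\mu,\nu}$. Integrating against $\nu$ and using its shift invariance yields
\[
\lim_{n\to\infty} \frac{-1}{|\Lambda_n|} \int \log \mu(x(\Lambda_n))\, d\nu(x) = \int I_\mu\, d\nu.
\]

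For the second (``pressure side'') computation, using (A1), I express $\mu(x(\Lambda_n))$ through the Gibbs formula at scale $M_n$:
\[
\mu(x(\Lambda_n)) = \sum_{\delta \in \A^{\partial M_n},\, \mu(\delta)>0}\, \mu(\delta) \sum_{\eta \in \A^{M_n \setminus \Lambda_n}} \frac{e^{-U^\Phi(x(\Lambda_n)\eta\delta)}}{Z^{\Phi,\delta}(M_n)}.
\]
Factoring out $e^{-U^\Phi(x(\Lambda_n))}$, the remaining boundary and collar contributions involve only $O(|M_n \setminus \Lambda_n| + |\partial M_n|)$ edges, and the $D$-condition guarantees both the existence of admissible extensions and that $Z^{\Phi,\delta}(M_n)/Z^\Phi(M_n)$ is uniformly $e^{o(|\Lambda_n|)}$. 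Since $|M_n|/|\Lambda_n| \to 1$ and Ruelle's formula gives $\log Z^\Phi(M_n)/|\Lambda_n| \to P(\Phi)$, I obtain, uniformly in $x \in \supp(\mu)$,
\[
-\log \mu(x(\Lambda_n)) = U^\Phi(x(\Lambda_n)) + \log Z^\Phi(\Lambda_n) + o(|\Lambda_n|).
\]
Integrating against $\nu$, exploiting shift invariance to evaluate $\tfrac{1}{|\Lambda_n|} \int U^\Phi(x(\Lambda_n))\, d\nu \to \sum_i \int \Phi(x(\{0,e_i\}))\, d\nu = -\int A_\Phi\, d\nu$, gives
\[
\lim_{n\to\infty} \frac{-1}{|\Lambda_n|} \int \log \mu(x(\Lambda_n))\, d\nu(x) = P(\Phi) - \int A_\Phi\, d\nu.
\]

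Equating the two limits produces $P(\Phi) = \int (I_\mu + A_\Phi)\, d\nu$, which is the claim. The main technical obstacle I anticipate is in the pressure-side estimate: because the classical $D$-condition only controls compatibility of $\Lambda_n$-configurations with configurations on the \emph{far} complement $M_n^c$, the comparison between $\mu(x(\Lambda_n))$ and $e^{-U^\Phi(x(\Lambda_n))}/Z^\Phi(\Lambda_n)$ must be routed through the collar $M_n$, and all boundary-edge and partition-function mismatches must be absorbed into the $o(|\Lambda_n|)$ error using the Van Hove--type decay $|M_n \setminus \Lambda_n|/|\Lambda_n| \to 0$.
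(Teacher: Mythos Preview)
Your proposal is correct and follows essentially the same approach as the paper: the paper likewise proves the uniform estimate $-\log \mu(x(\Lambda_n)) = U^\Phi(x(\Lambda_n)) + \log Z^\Phi(\Lambda_n) + o(|\Lambda_n|)$ by conditioning on $\partial M_n$ and using the D-condition to interpolate through the collar, and then decomposes $-\log \mu(x(\Lambda_n))$ lexicographically, handling interior sites via (A2), bounding all terms below via (A3), and absorbing the $\Lambda_n \setminus \Lambda_n^N$ contribution using Lemma~\ref{vanHove}. The only cosmetic difference is that the paper bounds $\mu(w\mid\delta)$ directly against $e^{-U^\Phi(w)}/Z^\Phi(\Lambda_n)$ rather than passing through $Z^{\Phi,\delta}(M_n)$ and $Z^\Phi(M_n)$, but this is the same estimate.
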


Theorem 1 of~\cite{GK}, which motivated our paper, shows that if
$\mu$ is a Gibbs measure for a nearest-neighbor interaction $\Phi$
and $X_\Phi$ has a safe symbol $a$ and satisfies SSM, then the
pressure representation above holds for the point mass $\nu$ on
$a^{\zz^d}$. We remark that Theorem~\ref{easythm} generalizes this
result, with weaker hypotheses and a stronger conclusion. To see
this, first recall that the existence of a safe symbol is even
stronger than SSF, which implies (A1) and (A3) by
Propositions~\ref{SSF_D} and~\ref{SSF_positive}; second, recall from
Proposition~\ref{SSM_continuity} that SSM implies (A2).

\begin{proof}

Recall from Proposition~\ref{full} that $\supp(\mu) = X$. So,
$p_\mu$ is defined $\mu$-a.e. on $X$ and for any finite set $S$,
$w \in \L_S(X)$ iff $\mu(w) > 0$.

Choose $\ell < 0$ and $L > 0$ to be lower and
upper bounds respectively on finite values of $\Phi$.
Let $\Lambda_n, M_n$ be as in the definition of the
D-condition.

We begin by proving that
\begin{equation}\label{subexperror}
\frac{1}{|\Lambda_n|} \left( \log Z^{\Phi}(\Lambda_n) + \log \mu(x(\Lambda_n))
+ U^{\Phi}(x(\Lambda_n)) \right) \rightarrow 0
\end{equation}
uniformly in $x \in X$
(though we will need this only for $x \in \supp(\nu)$).
For this, we will only use the D-condition (A1).

Fix $n$ and let $R_n = |M_n| - |\Lambda_n|$.
Note that for any $w \in \L_{\Lambda_n}(X)$,
\begin{equation}
\label{average} \mu(w) = \sum_{\delta \in \L_{\partial M_n}(X)} ~
\mu(w ~|~ \delta) \mu(\delta).
\end{equation}
For any such $w$ and $\delta$, by the D-condition
there exists $y_{w,\delta} \in \L_{M_n \setminus \Lambda_n}(X)$
such that $wy_{w,\delta}\delta \in \L(X)$.  Then there is a constant
$C_d > 0$ such that

\begin{multline*}
\mu(w \ | \ \delta) \geq \mu(wy_{w,\delta} \ | \ \delta) =
\frac{e^{-U^{\Phi}(wy_{w,\delta}\delta)}}
{\sum_{u \in \L_{M_n}(X)} e^{-U^{\Phi}(u\delta)}} \\
\geq \frac{e^{-U^{\Phi}(w) - C_d R_n L}}{\sum_{v \in
\L_{\Lambda_n}(X)} e^{-U^{\Phi}(v)} |\A|^{C_dR_n} e^{- C_dR_n \ell}}
= \frac{e^{-U^{\Phi}(w)}}{Z^\Phi(\Lambda_n)} e^{R_n (C_d\ell - C_dL
- C_d\log |\A|)}.
\end{multline*}

Let $y_{\max}$ achieve $\max  \mu(wy \ | \ \delta)$ over all
$y \in \L_{M_n \setminus \Lambda_n}(X)$.
Then,
\begin{multline*}
\mu(w \ | \ \delta) = \sum_{y \in \L_{M_n \setminus \Lambda_n}(X)}
\mu(wy \ | \ \delta) \leq |\A|^{R_n} \mu(wy_{\max} \ | \ \delta)
\\ = |\A|^{R_n}
 \frac{e^{-U^{\Phi}(wy_{\max} \delta)}}{\sum_{u \in \L_{M_n}(X)}
 e^{-U^{\Phi}(u\delta)}}
\leq |\A|^{R_n} \frac{e^{-U^{\Phi}(w) - C_dR_n \ell}}{\sum_{v
\in \L_{\Lambda_n}(X)}
e^{-U^{\Phi}(v) - C_d R_n L}} \\
= \frac{e^{-U^{\Phi}(w)}}{Z^\Phi(\Lambda_n)} e^{R_n (C_d L -
C_d\ell + \log |\A|)}.
\end{multline*}

Since $\sum_{\delta} \mu(\delta) = 1$,
we can combine the three formulas above to see that
\[
\gamma^{-R_n} \leq \mu(w) Z^\Phi(\Lambda_n)
e^{U^{\Phi}(w)} \leq \gamma^{R_n},
\]
where $\gamma := e^{C_d\log |\A| + C_dL - C_d\ell} > 0$. Since
$\frac{R_n}{|\Lambda_n|} \rightarrow 0$, this implies
(\ref{subexperror}).

We use (\ref{subexperror}) to represent pressure:
\[
P(\Phi) = \lim_{n \rightarrow \infty}
\frac{\log Z^\Phi(\Lambda_n)}{|\Lambda_n|}
= \lim_{n \rightarrow \infty} \int \frac{\log Z^\Phi(\Lambda_n)}{|\Lambda_n|} \ d\nu
= \lim_{n \rightarrow \infty} \int \frac{-U^{\Phi}(x(\Lambda_n))
- \log \mu(x(\Lambda_n))}{|\Lambda_n|} \ d\nu.
\]
(Here the second equality comes from the fact that
$\frac{\log Z^\Phi(\Lambda_n)}{|\Lambda_n|}$ is independent of $x$,
and the third from (\ref{subexperror}).)
Since $\nu$ is shift-invariant and $\Lambda_n \nearrow \infty$,
$$
\lim_{n \rightarrow \infty} \int
\frac{-U^{\Phi}(x(\Lambda_n))}{|\Lambda_n|} \ d\nu = \int A_{\Phi}(x) \ d\nu,
$$
and so we can write
\[
P(\Phi) = \int A_{\Phi}(x) \ d\nu
- \lim_{n \rightarrow \infty}
\int \frac{\log \mu(x(\Lambda_n))}{|\Lambda_n|} \ d\nu.
\]

It remains to show that
$\lim_{n \rightarrow \infty} \int
\frac{- \log \mu(x(\Lambda_n))}{|\Lambda_n|} \ d\nu = \int I_{\mu}(x) \ d\nu$.
We do this by decomposing $\mu(x(\Lambda_n))$ as a product of
conditional probabilities. Denote by
 $(s^{(n)}_i)_{i=1}^{|\Lambda_n|}$ the sites of
 $\Lambda_n$, ordered lexicographically.
 For any $1 \leq i \leq |\Lambda_n|$,
 denote by $S^{(n)}_i$ the set
 $\{s^{(n)}_j: ~ 1 \le j \le i-1\}$. (This means that
 $S^{(n)}_1 = \varnothing$). Then for any $x \in \supp(\nu)$, we can write
\begin{equation}\label{decomp}
- \log \mu(x(\Lambda_n)) = \sum_{i = 1}^{|\Lambda_n|}
- \log \mu \left(x(s^{(n)}_i) \ | \ x(S^{(n)}_i)\right)
= \sum_{i=1}^{|\Lambda_n|} - \log p_{\mu, S^{(n)}_i - s^{(n)}_i} (\sigma_{s^{(n)}_i} x).
\end{equation}

Clearly, each term $- \log p_{\mu, S^{(n)}_i - s^{(n)}_i} (\sigma_{s^{(n)}_i} x)$ is
lower bounded by $0$. To get an upper bound, we use shift-invariance
of $\mu$, the fact that $S^{(n)}_i - s^{(n)}_i \subset \PP$ and
Assumption (A3) to conclude that for any $x \in \supp(\nu)$ and all $n,i$,
$$
p_{\mu, S^{(n)}_i - s^{(n)}_i} (\sigma_{s^{(n)}_i} x) \ge c,
$$
where $c:= c_{\mu,\nu}$.

Fix $\epsilon > 0$ and $N \in \mN$.

By Lemma~\ref{vanHove}, for sufficiently large $n$,
$$
\frac{|\Lambda_n^N|}{|\Lambda_n|}  > 1 -
\epsilon.
$$
(recall that $ \Lambda_n^N = \{v \in \Lambda_n: ~ v + \PP_N  \subset
\Lambda_n\}. $)

For all $i$ such that $s^{(n)}_i \in \Lambda_n^N$, we have $\PP_N
\subset S^{(n)}_i - s^{(n)}_i \subset \PP$.  By assumption (A2), we
then have that for sufficiently large $N$, all $i$ such that
$s^{(n)}_i \in \Lambda_n^N$ and all $x \in \supp(\nu)$,
$$
\left|  p_{\mu, S^{(n)}_i -
s^{(n)}_i} (\sigma_{s^{(n)}_i} x) - p_{\mu}(\sigma_{s^{(n)}_i}
x)\right| < \epsilon.
$$

Since $p_{\mu, S^{(n)}_i - s^{(n)}_i}$ is bounded from below
by $c$ and $p_{\mu,n}$ converges to $p_\mu$ on $\supp(\nu)$, it
follows that $p_\mu$ is also bounded below by
c
%$c_{\mu}$
on $\supp(\nu)$.
Thus,
\[
\left|- \log p_{\mu, S^{(n)}_i - s^{(n)}_i} (\sigma_{s^{(n)}_i} x) -
I_{\mu}(\sigma_{s^{(n)}_i} x)\right| < \epsilon/c
\]

Therefore,
\[
\left|\int- \log p_{\mu, S^{(n)}_i - s^{(n)}_i} (\sigma_{s^{(n)}_i} x) \
d\nu - \int I_{\mu}(\sigma_{s^{(n)}_i} x) \ d\nu\right| <
\epsilon/c.
\]

Since $\nu$ is shift-invariant,
we have
\[
\left|\int - \log p_{\mu, S^{(n)}_i - s^{(n)}_i} (\sigma_{s^{(n)}_i}
x) \ d\nu - \int I_{\mu}(x) \ d\nu\right| < \epsilon/c.
\]

Now, we are prepared to give bounds on (\ref{decomp}). By the preceding,
$$
\left|\sum_{s^{(n)}_i \in \Lambda_n^N} \int - \log p_{\mu, S^{(n)}_i - s^{(n)}_i} (\sigma_{s^{(n)}_i} x) \ d\nu - |\Lambda_n^N| \int I_{\mu}(x) \
d\nu \right| \le |\Lambda_n| (\epsilon/c).
$$
Also, since
$0 \le \int - \log p_{\mu, S^{(n)}_i - s^{(n)}_i} (\sigma_{s^{(n)}_i} x)
\ d\nu \le - \log c$,
$$
0 \le \sum_{s^{(n)}_i \notin \Lambda_n^N} \int
- \log p_{\mu, S^{(n)}_i - s^{(n)}_i} (\sigma_{s^{(n)}_i} x) \ d\nu
\le
|\Lambda_n \setminus \Lambda_n^N| (-\log c) \leq \epsilon
|\Lambda_n|  (-\log c).
$$

Therefore, by (\ref{decomp}),
\[
- |\Lambda_n| \epsilon/c \leq \int - \log \mu(x(\Lambda_n)) \
d \nu - |\Lambda_n^N| \int I_{\mu}(x) \ d\nu \leq |\Lambda_n|
(\epsilon/c - \epsilon \log c).
\]

By dividing by $|\Lambda_n|$ and letting $n \rightarrow \infty$, we see that

\begin{multline*}
-\epsilon/c + \int I_{\mu}(x) \ d\nu \leq \liminf_{n \rightarrow
\infty}
\frac{\int - \log \mu(x(\Lambda_n))}{|\Lambda_n|} \ d\nu \textrm{ and}\\
\limsup_{n \rightarrow \infty} \frac{\int - \log
\mu(x(\Lambda_n))}{|\Lambda_n|} \ d\nu \leq
(\epsilon/c - \epsilon \log c) +
\int I_{\mu}(x) \ d\nu.
\end{multline*}

Since $\epsilon > 0$ was arbitrary,
\[
\lim_{n \rightarrow \infty} \frac{\int - \log \mu(x(\Lambda_n))}{|\Lambda_n|}
\ d\nu = \int I_{\mu}(x) \ d\nu,
\]
completing the proof.
\end{proof}

\begin{example}
We claim that the pressure representation given in the conclusion of Theorem~\ref{easythm}
fails for the two-dimensional three checkerboard
system $X = C_3^{(2)}$.

Recall that there exists a fully supported uniform Gibbs measure
$\mu$ on $X$; in~\cite{tom-nishant}, it is also shown that $h(\mu) = P(0)$.  This number is positive and in fact has been computed exactly by Lieb~\cite{Lieb} (Lieb computed $P(0)$ for the ``square ice'' model, which is well known to have the same $P(0)$ as $ C_3^{(2)}$~\cite[Example 4.5]{Schmidt1}).

On the other hand, let $x \in \{0,1,2\}^{\zz^2}$ be defined by
$$
x_{(i,j)} = i - j \pmod 3.
$$
Then $x \in X$ and
if $y \in X$ such that $y_{(0,-1)} = x_{(0,-1)}=1$ and $y_{(-1,0)} = x_{(-1,0)} = 2$, then $y_{(0,0)}$ is forced to equal $x_{(0,0)} = 0$.
It follows that $p_\mu(x) = 1$.
In fact, $p_\mu$ takes the value 1 on each point of the (periodic) orbit of $x$.  Thus,
for the periodic point measure $\nu$ on this orbit, $\int I_\mu ~d\nu = 0$.  Since
$A_\Phi \equiv 0$, we have $ \int I_\mu + A_\Phi ~d\nu = 0$, in contrast to $P(\Phi) = P(0) > 0$.

While $X = C_3^{(2)}$ does not satisfy the D-condition, it is topologically mixing~\cite[Proposition 7.2]{Schmidt2} (i.e., there exists $\ell > 0$ such that whenever $S,T$ are
finite subsets of $\mZ^2$ with $d(S,T) \ge \ell$ and $u \in \L_S(X),
v \in \L_T(X)$, then there exists $x \in X$ such that $x(S) = u$
and $x(T) = v$).
\end{example}

\begin{remark}
The first part of the proof of
Theorem~\ref{easythm} implies the result, mentioned at the end of
Section~\ref{pressure}, that if $X_\Phi$  satisfies the D-condition,
then any Gibbs measure for $\Phi$  must be an equilibrium state for
$A_\Phi$: simply integrate equation (\ref{subexperror}) with respect
to $\mu$.

Also, in the case $\Phi \equiv 0$, (\ref{subexperror}) can be viewed as a
uniform version
of the Shannon-MacMillan-Breiman Theorem.
%gives a pressure
%representation, without any integration: for any $x \in \supp(\mu)$,
%$$
%P(\Phi) = \lim_{n \rightarrow \infty} \frac{-\log(\mu(x(\Lambda_n)))
%- U^{\Phi}(x(\Lambda_n))}{|\Lambda_n|}
%$$
%\textcolor{blue}{and the convergence is uniform.}
%
%However, it is not clear how to compute $\mu(x(\Lambda_n))$ in any
%reasonably efficient way; in contrast, if assumptions \textcolor{blue}{of our main results} are satisfied, then
%the conditional measure $p_\mu(x)$ can be approximated efficiently, at least
%in the two-dimensional case
%(see Section~\ref{approx_alg}). }
\end{remark}

We now apply Theorem~\ref{easythm} to obtain a result which gives
a integral representation of $P(\Phi)$ for {\em every} invariant measure
$\nu$.

\begin{corollary}\label{easycor}
If $\Phi$ is a nearest-neighbor interaction with underlying SFT $X$,
$\mu$ is a Gibbs measure for $\Phi$,\\

\noindent {\rm (B1)} $X$ satisfies the classical D-condition,

\noindent {\rm (B2)}
$\lim_{S \rightarrow \PP} ~p_{\mu, S}(x)= p_\mu(x)$, uniformly over
$x \in \supp(\mu)$, and

\noindent {\rm (B3)} $c_{\mu} > 0$,\\

\noindent then
$$
P(\Phi) = \int I_{\mu}(x) + A_{\Phi}(x) \ d\nu = \int I_{\mu}(x) -
\sum_{i=1}^d \Phi(x(\{0,e_i\})) \ d\nu
$$
for every shift-invariant measure $\nu$ with $\supp(\nu) \subseteq X$.

\end{corollary}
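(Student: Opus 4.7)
The plan is to obtain the corollary as a direct consequence of Theorem~\ref{easythm}, by verifying that hypotheses (B1)--(B3) (which are stated in terms of $\supp(\mu)$) imply hypotheses (A1)--(A3) of Theorem~\ref{easythm} (which are stated in terms of $\supp(\nu)$) for every shift-invariant $\nu$ supported in $X$. The key observation enabling this reduction is Proposition~\ref{full}: assumption (B1) guarantees that $\supp(\mu) = X_\Phi = X$, so any shift-invariant $\nu$ with $\supp(\nu) \subseteq X$ automatically satisfies $\supp(\nu) \subseteq \supp(\mu)$.

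First I would record that (A1) is literally (B1). Next, for (A2), uniform convergence of $p_{\mu,S}$ to $p_\mu$ over the larger set $\supp(\mu)$ immediately implies uniform convergence over any subset, in particular over $\supp(\nu)$. Finally, for (A3), I would observe that since
$$c_{\mu,\nu} = \inf_{x \in \supp(\nu),\, S \subset \PP,\, |S| < \infty} p_{\mu,S}(x)$$
is an infimum taken over a subset of the index set for
$$c_{\mu} = c_{\mu,\mu} = \inf_{x \in \supp(\mu),\, S \subset \PP,\, |S| < \infty} p_{\mu,S}(x),$$
the containment $\supp(\nu) \subseteq \supp(\mu)$ gives $c_{\mu,\nu} \geq c_{\mu} > 0$, so (A3) holds.

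With (A1)--(A3) verified, Theorem~\ref{easythm} applies directly and yields the desired representation
$$P(\Phi) = \int I_{\mu}(x) + A_{\Phi}(x) \, d\nu = \int I_{\mu}(x) - \sum_{i=1}^d \Phi(x(\{0,e_i\})) \, d\nu$$
for the given $\nu$. Since this argument is valid for every shift-invariant $\nu$ with $\supp(\nu) \subseteq X$, the conclusion follows. There is no genuine obstacle here; the only subtlety worth flagging is the use of Proposition~\ref{full} to ensure $\supp(\mu)$ is all of $X$, which is what allows (B2) and (B3) to be transferred to arbitrary $\nu$ supported in $X$.
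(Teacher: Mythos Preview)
Your proposal is correct and follows exactly the approach intended in the paper, which simply states that the corollary ``follows immediately from Theorem~\ref{easythm}.'' You have merely spelled out the implicit details---in particular the use of Proposition~\ref{full} to get $\supp(\mu)=X$, and the monotonicity $c_{\mu,\nu}\ge c_\mu$---that justify this immediacy.
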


\begin{proof}
This follows immediately from Theorem~\ref{easythm}.
\end{proof}

\begin{corollary}\label{easycor3}
If $\Phi$ is a nearest-neighbor interaction with underlying SFT $X =
X_\Phi$, $\mu$ is a Gibbs measure for $\Phi$,\\

\noindent {\rm (1)} $X$ satisfies SSF, and

\noindent {\rm (2)} $\mu$ satisfies SSM,\\

\noindent then
$$
P(\Phi) = \int I_{\mu}(x) + A_{\Phi}(x) \ d\nu = \int I_{\mu}(x) -
\sum_{i=1}^d \Phi(x(\{0,e_i\})) \ d\nu
$$
for every shift-invariant measure $\nu$ with $\supp(\nu) \subseteq X$.

\end{corollary}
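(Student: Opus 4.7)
The plan is to deduce Corollary~\ref{easycor3} directly from Corollary~\ref{easycor} by showing that the hypotheses (1) SSF on $X$ and (2) SSM for $\mu$ imply each of the three hypotheses (B1), (B2), (B3) needed to invoke the earlier corollary. No further direct computation with pressure should be necessary.

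First I would verify (B1), the classical D-condition for $X$. By Proposition~\ref{SSF_D}, SSF already implies the block D-condition (in fact with each $R_n = 1$), and the discussion immediately after the proof of Lemma~\ref{blockD} records that for a nearest-neighbor SFT the block D-condition implies the classical D-condition (by choosing $\Lambda_n = \prod_i [-n,n]$ and $M_n = \prod_i [-(n+R_n), n+R_n]$). Hence (B1) holds.

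Next I would verify (B3), namely $c_\mu > 0$. This is exactly the conclusion of Proposition~\ref{SSFpos}: if $X_\Phi$ satisfies SSF, then any Gibbs measure $\mu$ for $\Phi$ has $c_\mu > 0$ (in fact the stronger infimum over all finite $S \subset \mZ^d \setminus \{0\}$ is positive, which a fortiori gives $c_\mu > 0$).

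For (B2), I would appeal to Proposition~\ref{SSM_continuity}, which asserts that SSM for an MRF $\mu$ implies the uniform convergence $\lim_{S \to \PP,\, U \to +\infty} p_{\mu, S \cup U}(x) = p_\mu(x)$ on $\supp(\mu)$. As noted immediately after the statement of that stronger convergence in Section~\ref{SSM}, this uniform convergence clearly implies the weaker uniform convergence $\lim_{S \to \PP} p_{\mu, S}(x) = p_\mu(x)$ on $\supp(\mu)$ (take $U = \varnothing$ in the quantified definition). That is precisely (B2).

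With (B1), (B2), (B3) in hand, Corollary~\ref{easycor} applies and yields, for every shift-invariant $\nu$ with $\supp(\nu) \subseteq X$,
\[
P(\Phi) = \int I_\mu(x) + A_\Phi(x)\, d\nu = \int I_\mu(x) - \sum_{i=1}^d \Phi(x(\{0,e_i\}))\, d\nu,
\]
which is the desired conclusion. There is no real obstacle here: the corollary is a pure bookkeeping consequence of Corollary~\ref{easycor} together with the three implications SSF $\Rightarrow$ D-condition, SSF $\Rightarrow c_\mu>0$, and SSM $\Rightarrow$ uniform convergence of $p_{\mu,S}$ to $p_\mu$, all of which have already been established in earlier propositions.
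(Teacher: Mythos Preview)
Your proposal is correct and follows exactly the same route as the paper's proof, which simply cites Corollary~\ref{easycor} together with Propositions~\ref{SSF_D}, \ref{SSM_continuity}, and \ref{SSF_positive} to verify (B1), (B2), (B3) respectively. Your additional remarks (that the block D-condition implies the classical one, and that the stronger uniform convergence of Proposition~\ref{SSM_continuity} implies the weaker form needed for (B2)) just make explicit what the paper leaves implicit.
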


\begin{proof}
This follows from Corollary~\ref{easycor}, Proposition~\ref{SSF_D},
Proposition~\ref{SSM_continuity} and Proposition~\ref{SSF_positive}.
\end{proof}

Corollary~\ref{easycor3} applies to the unique uniform Gibbs measures for the 
hard square shift and the
$k$-checkerboard shift for $k \ge 12$ in dimension $d=2$. 

Next, we state and prove a more difficult version of
Theorem~\ref{easythm}.

\begin{theorem}\label{hardthm}
If $\Phi$ is a nearest-neighbor interaction with underlying $\zz^2$-SFT $X$,
$\mu$ is a Gibbs measure for $\Phi$, $\nu$ is a shift-invariant
measure with $\supp(\nu) \subseteq X$,\\

\noindent {\rm (C1)} $X$ satisfies the block D-condition,

\noindent {\rm (C2)}  $\lim_{S \rightarrow \PP, U \rightarrow
+\infty} p_{\mu, S \cup U}(x) = p_\mu(x)$ uniformly on $\supp(\nu)$, and

\noindent
{\rm (C3)} $p_{\mu}$ is positive over ${\rm \supp}(\nu)$, \\

\noindent
then $P(\Phi) = \int I_{\mu}(x) + A_\Phi(x) \ d\nu = \int I_{\mu}(x) - \sum_{i=1}^d \Phi(x(\{0,e_i\})) \ d\nu$.

\end{theorem}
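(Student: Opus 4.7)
The plan is to adapt the proof of Theorem~\ref{easythm}, compensating for the weaker positivity hypothesis (C3) by a compactness argument for the bulk of $\Lambda_n$ and a direct Gibbs lower bound for the boundary. The key preliminary observation is that (C2) upgrades the pointwise positivity in (C3) to a uniform one: each function $p_{\mu, S \cup U}$ is locally constant, hence continuous, on $\supp(\mu) = X_\Phi$ (where equality comes from Proposition~\ref{full} via (C1), since the block D-condition implies the classical D-condition), and by (C2) these functions converge uniformly on $\supp(\nu) \subseteq \supp(\mu)$ to $p_\mu$. Thus $p_\mu$ is continuous on the compact set $\supp(\nu)$, and together with (C3) we obtain $p_\mu \ge c > 0$ uniformly on $\supp(\nu)$ for some constant $c$.

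The subexponential-error portion of the proof of Theorem~\ref{easythm} uses only the D-condition and carries over verbatim with $\Lambda_n = [-n,n]^2$ and $M_n = [-(n+R_n), n+R_n]^2$ from the block D-condition, reducing the theorem to proving $\lim_n \frac{1}{|\Lambda_n|} \int -\log \mu(x(\Lambda_n)) \, d\nu = \int I_\mu \, d\nu$. Set $\Lambda_n^N = \{v \in \Lambda_n : v + \PP_N \subset \Lambda_n\}$ and $T_n = \Lambda_n \setminus \Lambda_n^N$, so that $|T_n|/|\Lambda_n| \to 0$ by Lemma~\ref{vanHove}. Enumerate $\Lambda_n$ by placing the sites of $T_n$ first (in any order) and then the sites of $\Lambda_n^N$ in lex order, and split via the chain rule,
\[
-\log \mu(x(\Lambda_n)) = -\log \mu(x(T_n)) + \sum_{s_i \in \Lambda_n^N} -\log p_{\mu, S_i^{(n)} - s_i^{(n)}}\!\bigl(\sigma_{s_i^{(n)}} x\bigr).
\]
For $s_i \in \Lambda_n^N$, the conditioning set $S_i^{(n)} - s_i^{(n)}$ contains $\PP_N$ and is contained in $\PP$, so (C2) with $U = \emptyset$ combined with the uniform positivity above gives $\bigl|-\log p_{\mu, S_i^{(n)} - s_i^{(n)}}(\sigma_{s_i^{(n)}} x) - I_\mu(\sigma_{s_i^{(n)}} x)\bigr| \le 2\epsilon/c$ uniformly for $x \in \supp(\nu)$; the interior sum then integrates, via shift-invariance of $\nu$, to $|\Lambda_n^N|\int I_\mu\, d\nu + O(\epsilon|\Lambda_n|)$.

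The new ingredient is the boundary estimate $\int -\log \mu(x(T_n)) \, d\nu = o(|\Lambda_n|)$. In dimension $2$, $T_n$ is a disjoint union of finitely many rectangular strips $B_1, \ldots, B_k$ (for the lex convention here, one bottom strip plus a left and right side strip), each of dimensions $O(n) \times N$ or $N \times O(n)$. Factoring
\[
\mu(x(T_n)) = \mu(x(B_1)) \cdot \mu(x(B_2) \mid x(B_1)) \cdots \mu(x(B_k) \mid x(B_1), \ldots, x(B_{k-1})),
\]
each factor admits a lower bound of the form $e^{-O(Nn)}$: applying the block D-condition to $B_j$ produces a thickening $B_j'$ with $|B_j'| = O(|B_j|)$, and for every boundary configuration $\delta$ on $\partial B_j'$ compatible with the relevant conditioning, the filling property supplies $\eta_\delta \in \A^{B_j' \setminus B_j}$ making $x(B_j)\eta_\delta\delta$ globally admissible, whereupon the Gibbs specification gives $\mu(x(B_j)\eta_\delta \mid \text{past}, \delta) \ge e^{-C|B_j'|}$; averaging over $\delta$ yields the factor bound. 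Multiplying gives $\mu(x(T_n)) \ge e^{-O(Nn)}$, so $-\log \mu(x(T_n)) \le O(Nn) = o(|\Lambda_n|)$ pointwise, and hence also after integrating against $\nu$. Combining with the interior estimate and letting $n \to \infty$ and then $\epsilon \to 0$ completes the proof.

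The main obstacle is the boundary bound. Without (A3)'s uniform positivity, one cannot argue term-by-term for boundary sites and must instead bound $\mu(x(T_n))$ directly; in $d = 2$ the set $T_n$ decomposes cleanly into rectangular strips on which the block D-condition operates independently, but in higher dimensions these strips meet along lower-dimensional corners and the iterative conditional lower bounds become considerably more delicate, consistent with the authors' remark that the proof for $d > 2$ is geometrically intricate. (A secondary observation is that the $U$-component of (C2) does not appear essential to this strategy, which uses only the $U = \emptyset$ case; (C2) is perhaps stated because it follows naturally from SSM via Proposition~\ref{SSM_continuity}.)
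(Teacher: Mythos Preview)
Your proof has a genuine gap in the interior estimate. The claim that for $s_i \in \Lambda_n^N$ the conditioning set $S_i^{(n)} - s_i$ is ``contained in $\PP$'' is false: since you enumerate all of $T_n$ before $\Lambda_n^N$, the set $S_i^{(n)}$ contains every site of $T_n$, including those lexicographically \emph{after} $s_i$. Concretely, take $s_i = (n-N, v_2)$ on the rightmost column of $\Lambda_n^N$; the sites $(n-N+1, v_2), \ldots, (n, v_2)$ lie in the right strip of $T_n$, are lex-after $s_i$, and after translation by $-s_i$ become $(1,0), \ldots, (N,0) \in B_N \cap \PP^c$. Hence (C2) cannot be invoked, not even with nonempty $U$, since (C2) requires the future portion $U$ to lie in $(B_N \cup \PP)^c$. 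The same obstruction appears on the left: if $s_i = (-n+N, v_2)$ with $v_2 < n$, then $(-n+N-1, v_2+1)$ in the left strip of $T_n$ is lex-after $s_i$ and translates to $(-1,1) \in B_N \cap \PP^c$. These bad interior sites form a strip of width $O(N)$ along the left and right edges of $\Lambda_n^N$, and for them you have no lower bound on $p_{\mu,S_i^{(n)}-s_i}$ (this is exactly what (A3) would have supplied).

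This is precisely why the paper's decomposition is more elaborate. The box is processed in horizontal strips of height $m \gg k$; each strip is split into a left trapezoid $C_{3t+1}$, a slanted parallelogram $C_{3t+2}$, and a right trapezoid $C_{3t+3}$, with the right trapezoid conditioned \emph{after} the parallelogram, not before. The trapezoids and the inner boundary $C_0$ are handled by a direct Gibbs lower bound (Lemma~\ref{weakbd}), while the parallelogram slope is chosen so that for every bulk site $s_i \in C_{3t+2}$, all already-conditioned sites lying in the lex-future of $s_i$ (pieces of $C_0$ above and to the right, and higher rows of $C_{3t+1}$) are at $L^\infty$-distance greater than $k$. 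Thus the translated conditioning set sits in $\PP \cup B_k^c$, which is exactly the form to which the $U$-component of (C2) applies. Your secondary observation is therefore a tell: a correct argument genuinely needs the $U$-part of (C2), and the combination (A2) $+$ (C3) that your proof actually uses is strictly weaker than the hypotheses of either theorem.
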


In assumption (C3),  $p_{\mu}$ is taken, by our convention, to mean the limit of
$p_{\mu,n}$, which exists on $\supp(\nu)$ by assumption (C2).

In comparing Theorems~\ref{easythm} and~\ref{hardthm}, note the
tradeoffs in the assumptions.
\begin{enumerate}
\item
(C1) of ~\ref{hardthm} clearly implies (A1) of~\ref{easythm}.
\item
(C2) of~\ref{hardthm} clearly implies (A2) of~\ref{easythm}.
\item
(A2), (A3) of~\ref{easythm} together imply (C3) of~\ref{hardthm}: by
(A3), $c_{\mu,\nu} > 0$ is a lower bound for
$\{p_{\mu,n}(x): ~ x \in \supp(\nu)\}$, and by (A2), for  $x \in
\supp(\nu)$, $p_{\mu,n}(x)$ approaches $p_{\mu}(x)$, which must also
be bounded from below by $c_{\mu,\nu}$.
\end{enumerate}

As mentioned in the introduction, we believe that Theorem~\ref{hardthm} holds for arbitrary
dimension, but we have stated and proved it only for dimension $d=2$ due to the technicality of a rather
involved geometric decomposition.

We do not have examples where Theorem~\ref{hardthm} applies but Theorem~\ref{easythm} does not.
While the proof of Theorem~\ref{hardthm} is considerably more difficult, we have included it
because it uses an interesting reordering of vertices in the decomposition of $\mu(x(B_n))$ that is used to
compensate for what appears to be a weaker assumption.
\medskip

{\em Proof of Theorem~\ref{hardthm}}.

As in the proof of Theorem~\ref{easythm}, since the D-condition
holds on $X$, it suffices to show that $\lim_{n \rightarrow \infty}
\int \frac{- \log \mu(x(B_n))}{(2n+1)^{2}} \ d\nu = \int I_{\mu} \
d\nu$. For our purposes, it will be more convenient to show
that $\lim_{n \rightarrow \infty} \int \frac{- \log
\mu(x(K_n))}{n^{2}} \ d\nu = \int I_{\mu} \ d\nu$, where
$K_n = [0,n+1]^{2}$. Clearly,
since $\frac{|K_n|}{n^{2}} \rightarrow 1$, this
still suffices.

Analogous to the proof of Theorem~\ref{easythm},  we will decompose
$\mu(x(K_n))$ as a product of conditional probabilities. However, we
no longer have a positive lower bound on $p_{\mu, S}(x)$ over $x \in
\supp(\nu)$ and finite subsets $S$ of $\PP$. By using a more
complicated decomposition of $K_n$ and the block D-condition, we
obtain a much weaker (but still strong enough for our purposes)
lower bound on the conditional probabilities involving sites near
the boundary of $K_n$. The following lemma gives this bound (which
we state for arbitrary dimension $d$ because it seems to be of independent interest.)

\begin{lemma}\label{weakbd}
Let $\Phi$ be a nearest-neighbor interaction with underlying SFT $X
=X_\Phi$.  Suppose that $S,T,U$ are finite subsets of $\mathbb{Z}^d$
such that $S \subseteq T \subseteq U$, $U$ is connected, $S \cap \underline{\partial} U = \varnothing$, and $[v]
\cap [w] \neq \varnothing$ for all $v \in \L_S(X)$ and $w \in \L_{U
\setminus T}(X)$. Then, for any $x \in \L_{U}(X)$,
\[
\mu(x(U \setminus T) \ | \ x(\underline{\partial} U)) \geq \gamma^{-(|U| - |S|)},
\]
where $\gamma = |\A| e^{2dL - \ell}$ %e^{\log |\A| + 2dL - \ell}$
for $\ell$ and $L$ the
minimum and maximum, respectively, of finite values of $\Phi$.
\end{lemma}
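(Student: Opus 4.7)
The plan is to use the nearest-neighbor Gibbs specification to write
\[
\mu(x(U\setminus T)\mid x(\underline{\partial} U)) \;=\; \frac{N}{Z^{\Phi,\,x(\underline{\partial} U)}(V)},
\]
where $V := U\setminus\underline{\partial} U$ and $N$ sums the Boltzmann weights $e^{-U^\Phi(w\,x(\underline{\partial} U))}$ over valid $w \in \A^V$ with $w|_C = x|_C$, $C := (U\setminus T)\setminus \underline{\partial} U$. This representation follows from the MRF property: conditioning on $x(\underline{\partial} U)$ produces a Gibbs measure on $V$ with boundary values $x(\underline{\partial} U)$.

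For the numerator $N$, I would enumerate valid completions using the compatibility hypothesis: for each $v \in \L_S(X)$, the hypothesis yields a point $y_v \in X$ with $y_v(S) = v$ and $y_v(U\setminus T) = x(U\setminus T)$. Distinct $v$'s give rise to restrictions $y_v|_V$ that differ on $S$, each carrying Boltzmann weight at least $e^{-L|E(U)|}$ (where $|E(U)|$ is the number of edges with both endpoints in $U$). This yields $N \geq |\L_S(X)| \cdot e^{-L|E(U)|}$, provided one handles the compatibility of each $y_v|_V$ with the conditioning $x(\underline{\partial} U)$ on $\underline{\partial} U \cap T$ (see below).

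For the denominator, grouping configurations on $V$ by their restriction to $S$ gives the upper bound $Z^{\Phi,\,x(\underline{\partial} U)}(V) \leq |\L_S(X)| \cdot |\A|^{|V\setminus S|} \cdot e^{-\ell|E(U)|}$, where $S \subseteq V$ follows from $S \cap \underline{\partial} U = \varnothing$. The factors of $|\L_S(X)|$ cancel; using $|V\setminus S| = |U\setminus S| - |\underline{\partial} U|$ and $|E(U)| \leq d|U|$, together with routine manipulation of the exponential factors, gives the target bound $\gamma^{-(|U|-|S|)}$ with $\gamma = |\A|e^{2dL-\ell}$.

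The main obstacle is ensuring that the extensions $y_v$ are compatible with $x$ on $\underline{\partial} U \cap T$, since the hypothesis controls behavior only on $U\setminus T$. I expect this to be handled by a preliminary MRF step conditioning first on $x|_{\underline{\partial} U \cap T}$ and exploiting the connectedness of $U$ and the ``cut'' role played by the buffer $T\setminus S$ implicit in the hypothesis, which reduces the argument to the case $\underline{\partial} U \subseteq U\setminus T$ where the hypothesis applies directly.
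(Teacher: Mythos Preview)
Your overall strategy matches the paper's: write the conditional probability as a ratio of Boltzmann sums over $V = U\setminus\underline{\partial} U$, lower-bound the numerator by producing one completion for each $v\in\L_S(X)$, upper-bound the denominator by grouping configurations on $V$ by their restriction to $S$, and cancel. However, your quantitative bounds are too crude and do \emph{not} give the stated conclusion.

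The problem is in what you cancel. You bound each numerator term by $e^{-L|E(U)|}$ and each denominator term by $e^{-\ell|E(U)|}$, so only the \emph{count} $|\L_S(X)|$ cancels, leaving
\[
\frac{N}{Z}\;\ge\;|\A|^{-|V\setminus S|}\,e^{-(L-\ell)|E(U)|}.
\]
Since $|E(U)|$ is of order $d|U|$, this exponent is proportional to $|U|$, not $|U|-|S|$; no ``routine manipulation'' turns it into $\gamma^{-(|U|-|S|)}$. In the regime of interest ($|S|$ close to $|U|$), your bound is exponentially weaker and useless for the subsequent argument. The paper instead splits the energy of each configuration as $U^\Phi(v)+(\text{energy on edges not entirely inside }S)$; there are at most $2d(|U|-|S|)$ such edges. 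This yields
\[
N\ \ge\ \sum_{v\in\L_S(X)} e^{-U^\Phi(v)}\,e^{-2dL(|U|-|S|)},\qquad
Z\ \le\ |\A|^{|U|-|S|}\sum_{v\in\L_S(X)} e^{-U^\Phi(v)}\,e^{-\ell(|U|-|S|)},
\]
so the full weighted sum $\sum_v e^{-U^\Phi(v)}$ cancels, and the remainder is exactly $\gamma^{-(|U|-|S|)}$.

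Regarding your ``main obstacle'' about compatibility on $\underline{\partial} U\cap T$: the paper does not perform the MRF reduction you sketch. It simply sets $w=y\delta$ with $y=x(U\setminus(T\cup\underline{\partial} U))$ and $\delta=x(\underline{\partial} U)$, and applies the compatibility hypothesis directly to this $w\in\L_{U\setminus T}(X)$. In every application of the lemma in the paper, $T$ is chosen so that $\underline{\partial} U\subseteq U\setminus T$, so $w$ is indeed supported on $U\setminus T$ and the issue you flagged never arises.
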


\begin{proof}

Consider any such $\Phi$, $S$, $T$, and $U$. Then for every $v \in \L_S(X)$ and $w \in \L_{U \setminus T}(X)$,
there exists $u_{v,w} \in \L_{T \setminus S}(X)$ so that $vu_{v,w}w \in \L(X)$. Fix $x \in \L_{U}(X)$ and
denote $y = x(U \setminus (T \cup \underline{\partial} U))$, $\delta = x(\underline{\partial} U)$ and $\bar{u} = u_{v,y\delta}$. Since $\mu$ is a Gibbs measure,
\begin{multline*}
\mu(y \ | \ \delta) = \frac{\sum_{z \in \L_{U \setminus \underline{\partial} U}(X)
\textrm{ s.t. } z\delta \in \L(X) \textrm{ and } z(U \setminus
(T \cup \underline{\partial} U)) = y}
~~e^{-U^{\Phi}(z\delta)}}{\sum_{z \in \L_{U \setminus \underline{\partial} U}(X)
\textrm{ s.t. }
z\delta \in \L(X)} e^{-U^{\Phi}(z\delta)}}\\
\geq \frac{\sum_{v \in \L_S(X)} e^{-U^{\Phi}(v \bar{u}
y\delta)}}{\sum_{v \in \L_S(X)} |\A|^{|U| - |S|} \max_{u \in \L_{U
\setminus (S \cup \underline{\partial} U)}(X)
\textrm{ s.t. } uv\delta \in \L(X)}~~ e^{-U^{\Phi}(uv\delta)}}\\
\geq \frac{\sum_{v \in \L_S(X)} e^{-U^{\Phi}(v) - 2d L (|U| - |S|)}}
{\sum_{v \in \L_S(X)} |\A|^{|U| - |S|} e^{-U^{\Phi}(v) - \ell (|U| -
|S|)}} = (|\A| e^{2dL - \ell})^{-(|U| - |S|)} = \gamma^{-(|U| -
|S|)}.
\end{multline*}

\end{proof}

We decompose $K_n$ into geometric shapes, and for those near the
boundary will use Lemma~\ref{weakbd} to show that the conditional
probability of filling the shape in a certain way cannot be too
small.

By (C2), for any $\epsilon > 0$, there exists $k := k_{\epsilon}$ so
that for any finite sets $S,U$ with $\PP_{k}
\subseteq S \subset \PP$ and $U \subseteq (B_k \cup \PP)^c$,
$|p_{\mu, S \cup U}(x) - p_{\mu}(x)| < \epsilon$ for all $x \in
\supp(\nu)$. (Equivalently, we could just require that $S \cup U$
contain $\PP_k$ and be contained in $B_k^c \cup \PP$.)

Since $X$ satisfies the block D-condition, there exists a sequence
$R_n$ of integers s.t. $\frac{R_n}{n} \rightarrow 0$ and, for any
rectangular prism $P = \prod_i [1,n_i]$, any $r \ge  R_{\max n_i}$,
and any configurations $v \in \L_P(X)$ and $w \in \L_S(X)$ for any
finite $S \subset {(\prod_i [1 - r, n_i + r])^c}$, we have $[v] \cap
[w] \neq \varnothing$ (see Lemma~\ref{blockD}). We may assume
without loss of generality that $R_n$ is non-decreasing by
redefining each $R_n$ as $\max_{i \leq n} R_i$.

%Due to the technicality of the decomposition, we will begin
%with $d = 2$, and then give an outline of how to extend to larger $d$ via induction.
%We are therefore decomposing $K_n = [0,n+1]^2$.
Our construction requires a parameter $m:= m_n$, which we choose to
be equal to $\lfloor \sqrt{nR_n} \rfloor$, and so
$\frac{R_n}{n} \rightarrow 0$, $\frac{m_n}{n} \rightarrow 0$,
and $\frac{R_n}{m_n} \rightarrow 0$..
We only consider $n$
large enough so that $m > 2k$ and $\frac{R_n}{n} < 1$.
 Define sets\\

\noindent
$C_0 = (\underline{\partial} K_n)$,\\ %\cup ([1,n]\times [1,k])$,\\

\noindent $C_1 = \{(i,j) \ : \ 1 \leq j \le m, 1 \leq i \leq k(m+1-j)\}$,

\noindent $C_2 = \{(i,j) \ : \ 1 \leq j \le m, k(m+1-j) < i
< n - kj\}$,

\noindent
$C_3 = \{(i,j) \ : \ 1 \leq j \le m,
 n - kj \leq i \leq n\}$,\\

\noindent
$C_{3t + 1} = C_1 + (0,tm)$ for all $ 1 \le t \le \lfloor \frac{n}{m} \rfloor - 2$,

\noindent
$C_{3t + 2} = C_2 + (0,tm)$ for all $1 \le t \le  \lfloor \frac{n}{m} \rfloor - 2$,

\noindent
$C_{3t + 3} = C_3 + (0,tm)$ for all $1 \le t \le  \lfloor \frac{n}{m} \rfloor - 2$, and\\

\noindent
$C_{3 \lfloor \frac{n}{m} \rfloor - 2} = [1,n] \times [m \lfloor \frac{n}{m} \rfloor - m + 1, n]$.\\

%\noindent
%$C_{3t + 1} = C_1 + (0,tm)$ for all $1 \le t < \lceil \frac{n}{m} \rceil - 1$,
%
%\noindent
%$C_{3t + 2} = C_2 + (0,tm)$ for all $1 \le t < \lceil \frac{n}{m} \rceil - 1$,
%
%\noindent
%$C_{3t + 3} = C_3 + (0,tm)$ for all $1 \le t < \lceil \frac{n}{m} \rceil - 2$, and\\
%
%\noindent
%$C_{3 \lceil \frac{n}{m} \rceil - 2} = [1,n] \times [m \lceil \frac{n}{m} \rceil - m + 1, n]$.\\

\begin{figure}[h]
\centering
\includegraphics[scale=0.4]{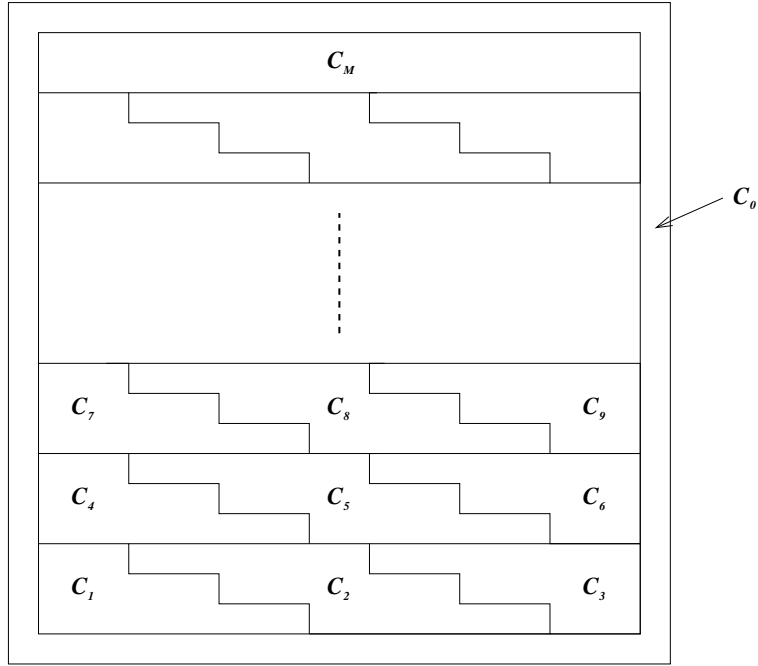}
\caption{Decomposing $K_n$ (here, $M = 3 \lfloor \frac{n}{m} \rfloor
-2$)} \label{pressurepic1}
\end{figure}

As illustrated in Figure~\ref{pressurepic1}, $C_0$ is the inner
boundary of $K_n$. For each $t$, $C_{3t + 1}$, $C_{3t + 2}$, and
$C_{3t + 3}$ form a partition of the strip $[1,n] \times [tm + 1, (t+1)m]$ of height $m$,
made of two (discrete) trapezoids and a (discrete) parallelogram.
And $C_{3 \lceil \frac{n}{m} \rceil - 2}$ is simply a single ``leftover'' strip at the top of
$[1,n]^2 = K_n \setminus \underline{\partial}K_n$. For every $t \in [1,3 \lceil
\frac{n}{m} \rceil - 2]$, we define $D_t = \bigcup_{s =
0}^{t-1} C_s$, and define $D_0 = \varnothing$.
Note that by the choice of $m$, for large $n$, the bulk of $K_n$ is comprised of
$\{C_i; ~ i = 2 \pmod 3 \} $.

We decompose $\mu(x(K_n))$ as
\begin{equation}\label{bigdecomp}
\prod_{i=0}^{3 \lceil \frac{n}{m} \rceil -2} \mu(x(C_i) \ | \ x(D_i)).
\end{equation}

We begin by giving a lower bound for $\mu(x(C_0) \ | \ x(D_0)) =
\mu(x(C_0))$. For any $n$, letting $R := R_n$, then by definition $S
= [R+1,n-R]^2$, $T = K_n \setminus C_0$ and $U =  [-R,n+R]^2$
satisfy the hypotheses of Lemma~\ref{weakbd}. There clearly exists
$\delta_n \in \L_{\underline{\partial} U}(X)$ s.t. $\mu(\delta_n)
\ge |\A|^{-|\underline{\partial} U|}$. For any $x \in X$, by
definition of $R$, there exists $y \in \L_{U \setminus (K_n \cup \underline{\partial} U )}$
s.t. $x(C_0) y\delta_n \in \L(X)$.
So, by Lemma~\ref{weakbd},
\[
\mu\left(x(C_0) y \ | \ \delta_n\right) \geq \gamma^{-(|U| - |S|)}
\geq \gamma^{{-4(2R+1)(n+2R+1)}}.
\]
Therefore,
\begin{multline}\label{boundarybd}
\mu(x(C_0)) \geq \mu(x(C_0) y \delta_n) = \mu(\delta_n)
\mu(x(C_0) y \ | \ \delta_n) \\
\geq %|\A|^{-|\underline{\partial} U|} \gamma^{-4R(n+2)} =
|\A|^{-4(n+2R+1)} \gamma^{-4(2R+1)(n+2R+1)} \geq \gamma^{-4(2R+2)(n+2R+1)}.
\end{multline}
(here the last inequality follows from $\gamma > |\A|$.)

The final factor of (\ref{bigdecomp}) is easy to bound from below.
Note that the sets $S = T = \varnothing$ and $U = C_{3 \lfloor
\frac{n}{m} \rfloor - 2} \cup \partial C_{3 \lfloor
\frac{n}{m} \rfloor - 2}$ satisfy the hypotheses of
Lemma~\ref{weakbd}. Then,

\begin{multline}\label{lasttermbd}
\mu(x(C_{3 \lfloor \frac{n}{m} \rfloor - 2}) \ | \
x(D_{3 \lfloor \frac{n}{m} \rfloor - 2})) \\
= \mu(x(C_{3 \lfloor \frac{n}{m} \rfloor - 2}) \ | \ x(\partial C_{3
\lfloor \frac{n}{m} \rfloor - 2})) = \mu(x(U \setminus T) \ | \
x(\underline{\partial}U )) \geq \gamma^{-(|U| - |S|)} \geq
\gamma^{-(n+2)(2m+2)}.
\end{multline}
(here the first equality follows from the fact that $\mu$ is an MRF.)

We next deal with the terms in (\ref{bigdecomp}) of the form
$\mu(x(C_{3t + 1}) \ | \ x(D_{3t + 1}))$. We wish to apply
Lemma~\ref{weakbd} for $U = [0,n+1] \times [tm, n+1]$, $T = U
\setminus (\underline{\partial} U \cup C_{3t + 1})$, and
$S = ([km + R + 1, n - R] \times [tm + R + 1, (t + 1)m]) \cup ([R + 1, n - R] \times [(t + 1) m + R + 1, n - R])$.
(See Figure~\ref{pressurepic2}.)

\begin{figure}[h]
\centering
\includegraphics[scale=0.23]{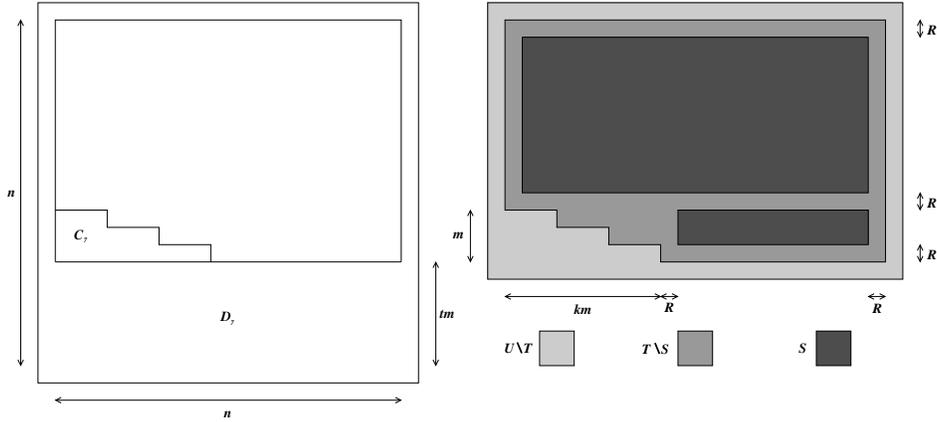}
\caption{$S$, $T$, and $U$ for $\mu(x(C_{3t+1}) \ | \ x(D_{3t+1}))$}
\label{pressurepic2}
\end{figure}

To check that Lemma~\ref{weakbd} can be used, we must show that for
any configurations $v \in \L_S(X)$ and $w \in
\L_{\underline{\partial} U \cup C_{3t + 1}}(X)$, $[v] \cap [w] \neq
\varnothing$. This requires two applications of the block
D-condition. Write $v$ as a concatenation $pq$ for $p \in \L_{[km +
R + 1, n - R] \times [tm + R + 1, (t + 1)m]}(X)$ and $q \in \L_{[R + 1, n -
R] \times [(t + 1)m + R + 1, n - R]}(X)$. By the block D-condition,
there exists a configuration $w'$ which extends $w$ and $p$. A
second application of the block D-condition gives a configuration
$w''$ that extends $q$ and $w'$ and hence $p,q$ and $w$. Thus, $[p]
\cap [q] \cap [w] = [v] \cap [w] \neq \varnothing$.

Since $\mu$ is an MRF, Lemma~\ref{weakbd} implies that
\begin{multline}\label{1mod3}
\mu(x(C_{3t + 1}) \ | \ x(D_{3t+1})) =
\mu(x(C_{3t + 1}) \ | \ x(\underline{\partial} ([0,n+1] \times [tm, n+1]))) \geq \gamma^{-(|U| - |S|}) \\
\geq \gamma^{-(km^2 + 5(R+1)(n+2))}.
\end{multline}

We next deal with the terms in (\ref{bigdecomp}) of the form $\mu(x(C_{3t + 3}) \ | \ x(D_{3t + 3}))$. We wish to apply Lemma~\ref{weakbd} for $U = (([0,n+1] \times
[tm, n+1]) \cup C_{3t + 3}) \cup \partial C_{3t + 3}$, $T = U \setminus (\underline{\partial} U \cup C_{3t + 3})$, and
$S = [R + 1, n - R] \times [(t + 1)m + R + 1, n - R]$. (See Figure~\ref{pressurepic3}.)

\begin{figure}[h]
\centering
\includegraphics[scale=0.23]{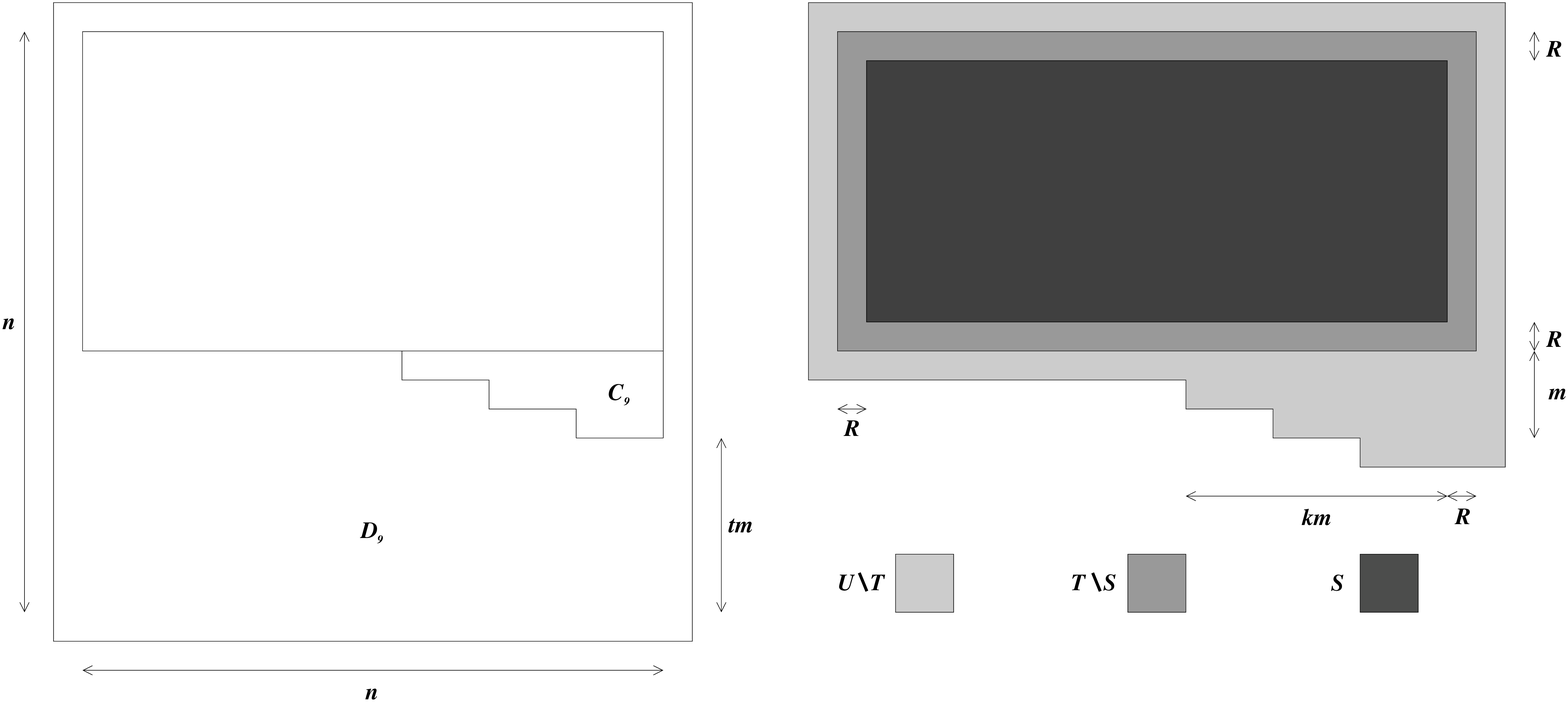}
\caption{$S$, $T$, and $U$ for $\mu(x(C_{3t+3}) \ | \ x(D_{3t+3}))$}
\label{pressurepic3}
\end{figure}

To check that Lemma~\ref{weakbd} can be used, we must show that for
any configurations $v \in \L_S(X)$ and $w \in \L_{\underline{\partial} U \cup
C_{3t + 3}}(X)$, $[v] \cap [w] \neq \varnothing$. This is a
straightforward application of the block D-condition.

Since $\mu$ is an MRF, we can use Lemma~\ref{weakbd} to show that
\begin{multline}\label{3mod3}
\mu(x(C_{3t + 3}) \ | \ x(D_{3t + 3})) = \mu(x(C_{3t + 3}) \ | \ x(\underline{\partial} U)) \geq \gamma^{-(|U| - |S|)} \\
\geq \gamma^{-(km^2 + 4(R+1)(n+2))}.
\end{multline}

It remains to deal with factors of the form $\mu(x(C_{3t + 2}) \ | \
x(D_{3t + 2}))$. For each $t$, denote the sites of $C_{3t+2}$ in
lexicographic order as $s^{(3t+2)}_i$, $1 \leq i \leq |C_{3t + 2}|$,
for each such $i > 1$, define $S^{(3t + 2)}_i = \bigcup_{j = 1}^{i-1} \{s^{(3t+2)}_j\}$,
and define $S^{(3t + 2)}_1 = \varnothing$. We first
decompose $\mu(x(C_{3t + 2} \ | \ D_{3t + 2}))$ as
\begin{equation}\label{littledecomp}
\prod_{i = 1}^{|C_{3t + 2}|} \mu\left(x(s^{(3t+2)}_i) \ | \
x(S^{(3t+2)}_i \cup D_{3t + 2})\right).
\end{equation}

%Note that since $D_{3t + 2} \supseteq C_0 = \underline{\partial} K_n$,
%the fact that $\mu$ is an MRF means that we can additionally condition on $\textcolor{red}{x([0,n+1]\times [-k,-1])}\setminus K_n)$ without changing
%any of these conditional probabilities.

Let $t \ge 1$.  For every $i$,
\begin{multline*}
\mu\left(x(s^{(3t+2)}_i) \ | \ x(S^{(3t+2)}_i \cup D_{3t + 2})\right)
= p_{\mu, (S^{(3t+2)}_i \cup D_{3t + 2}) - s^{(3t+2)}_i} (\sigma_{s^{(3t+2)}_i} x).
\end{multline*}
%\begin{multline*}
%\mu\left(x(s^{(3t+2)}_i) \ | \ x(S^{(3t+2)}_i \cup D_{3t + 2})\right)\\
%= \mu\left(x(s^{(3t+2)}_i) \ | \ x(S^{(3t+2)}_i \cup D_{3t + 2} \cup (\textcolor{red}{[0,n+1]\times [-k,-1]} \setminus K_n))\right) \\
%= p_{\mu, (S^{(3t+2)}_i \cup D_{3t + 2} \cup (\textcolor{red}{[0,n+1]\times [-k,-1])} \setminus K_n) - s^{(3t+2)}_i} (\sigma_{s^{(3t+2)}_i} x).
%\end{multline*}
The discrete parallelogram structure of $C^{(3t+2)}$ guarantees that
each set \newline $S^{(3t+2)}_i \cup D_{3t + 2}  - s^{(3t+2)}_i$
contains $\PP_k$ and is contained in $\PP \cup B_k^c$.
By definition of $k$, we then have
\begin{equation}\label{closesites2}
\left|p_{\mu, (S^{(3t+2)}_i \cup D_{3t + 2}) - s^{(3t+2)}_i} (\sigma_{s^{(3t+2)}_i} x)
- p_{\mu}(\sigma_{s^{(3t+2)}_i} x) \right| < \epsilon.
\end{equation}

We claim that (\ref{closesites2}) also holds for $t=0$ (and every $i$).
In this case, $S^{(2)}_i \cup D_2  - s^{(2)}_i$
need not contain $\PP_k$.  However,
$S^{(2)}_i \cup D_2   \cup [0,n+1]\times [-k,-1] - s^{(2)}_i$  does contain $\PP_k$ (and is contained in $\PP \cup B_k^c$). Moreover, since $\mu$ is an MRF and
$D_2 \supseteq C_0 = \underline{\partial} K_n$, we have
$$
p_{\mu, S^{(2)}_i \cup D_2 - s^{(2)}_i}(\sigma_{s^{(2)}_i} x) = p_{\mu, S^{(2)}_i \cup D_2  \cup [0,n+1]\times [-k,-1] - s^{(2)}_i}(\sigma_{s^{(2)}_i } x)
$$
which is within $\epsilon$ of
$p_{\mu}(\sigma_{s^{(2)}_i} x)$, proving the claim.

It follows from (C2) that
$p_{\mu}$ is the uniform limit of continuous functions on
$\supp(\nu)$; since, by (C3), it is also positive on ${\rm
supp}(\nu)$, it has a lower bound $c
> 0$ there. We can therefore integrate with respect to $\nu$ to see
that
\begin{equation}\label{closesites}
\left| \int - \log p_{\mu, (S^{(3t+2)}_i \cup D_{3t + 2} ) - s^{(3t+2)}_i} (\sigma_{s^{(3t+2)}_i} x) \ d\nu - \int I_{\mu}(x) \ d\nu \right| < \epsilon c^{-1}.
\end{equation}

We now combine (\ref{bigdecomp}), (\ref{boundarybd}),
(\ref{lasttermbd}), (\ref{1mod3}), (\ref{3mod3}),
(\ref{littledecomp}), and (\ref{closesites}) to see that
\begin{multline*}
\left|\int - \log \mu(x(K_n)) \ d\nu - \int I_{\mu}(x) \
d\nu \left(\sum_{t=0}^{\lfloor \frac{n}{m} \rfloor - 2} |C_{3t + 2}|\right)\right| \\
\leq n^2 \epsilon c^{-1} + \frac{n}{m} (9(R+1)(n+2) + 2km^2) \log \gamma
+ (4(2R+2) + 2m + 2)(n+2R+1) \log \gamma \\
\leq n^2 \epsilon c^{-1} + n(n+2R+1) \log \gamma
\left(\frac{9(R+1)}{m} + \frac{2km}{n+2} + \frac{4(2R+2) + 2m +
2}{n} \right).
\end{multline*}

Note that $\sum_{t} |C_{3t + 2}|
 \geq n^2 - 2nm - 2\frac{n}{m} m(km + 1) = n^2 - 2n((k+1)m+1)$. Therefore,
\begin{multline*}
\left| \int \frac{- \log \mu(x(K_n))}{n^2} \ d\nu - \int I_{\mu}(x)
\ d\nu \right| \leq \frac{2(k+1)(m+1)}{n} \left| \int I_{\mu}(x) \
d\nu \right| + \epsilon c^{-1} \\ + \log \gamma \left(1 +
\frac{2R}{n}  +\frac{1}{n}  \right) \left(\frac{9(R+1)}{m} +
\frac{2km}{n+2} + \frac{4(2R+2) + 2m + 2}{n} \right).
\end{multline*}

Recalling that $m = \lfloor \sqrt{nR} \rfloor$, $\frac{R}{n} \rightarrow 0$,
and $k$ is a constant, the right-hand side of this inequality approaches
$\epsilon c^{-1}$  as $n \rightarrow \infty$.  Therefore,
\begin{multline*}
-\epsilon c^{-1} + \int I_{\mu}(x) \ d\nu \leq \liminf_{n \rightarrow \infty}
\frac{\int - \log \mu(x(K_n))}{n^2} \ d\nu \textrm{ and}\\
\limsup_{n \rightarrow \infty} \frac{\int - \log \mu(x(K_n))}{n^2}
\ d\nu \leq \epsilon c^{-1} + \int I_{\mu}(x) \ d\nu.
\end{multline*}

By letting $\epsilon \rightarrow 0$, we see that

\[
\lim_{n \rightarrow \infty} \frac{\int - \log \mu(x(K_n))}{n^2}\ d\nu = \int I_{\mu}(x) \ d\nu,
\]
completing the proof. \hspace{ 3.45 in} $\square$

\section{Pressure Approximation schemes}
\label{approx_alg}

In this section, we derive, as a consequence of our pressure representation results, an
algorithm for
approximating the pressure of a shift-invariant nearest-neighbor
Gibbs interaction $\Phi$.
For this, we assume that the exact values of $\Phi$ are known;
it would be impossible to hope for a bound on computation time for $P(\Phi)$
if $\Phi$ were uncomputable or very hard to compute.
The main idea of this section comes from Gamarnik and Katz~\cite[Corollary 1]{GK}.

We would like to apply our main results to shift-invariant measures
$\nu$ which are as easy as possible to integrate against, for
instance the atomic measure supported on a periodic orbit. In
general, a $\mathbb{Z}^d$ SFT $X$ need not have any periodic points
(\cite{berger}). However, any nearest neighbor SFT $X$ that
satisfies SSF must have a periodic point: choose any $b \in \A$ and let $a
\in \A$ such that $b^{N_0}a^{\{0\}}$ is locally admissible in $X$,
then the point $z$ defined by $z_v = a$ if $\sum_i~ v_i$ is
even and $z_v = b$ if $\sum_i~ v_i$ is odd is periodic and in $X$.

\begin{proposition}
\label{approx_scheme}
 Let $\Phi$ be a nearest neighbor $\zz^d$
interaction and $X = X_\Phi$.  Assume that
\begin{enumerate}
\item[(i)] $X$ satisfies SSF,
\item[(ii)] $\Phi$ satisfies SSM at exponential rate.
\end{enumerate}
Then there is an algorithm to compute $P(\Phi)$ to within $\epsilon$
in time  $e^{O((\log \frac{1}{\epsilon})^{d-1})}$.
\end{proposition}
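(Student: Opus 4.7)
The plan is to invoke Corollary~\ref{easycor3} with $\nu$ supported on a single periodic orbit, turning the pressure into an explicit finite sum of values of $I_\mu + A_\Phi$, and then to approximate each $p_\mu(y)$ by a transfer-matrix computation on a bounded box whose size is dictated by the exponential SSM rate.

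Since SSM implies uniqueness of the Gibbs measure for $\Phi$, let $\mu$ be that unique Gibbs measure. Hypotheses (1)--(2) of Corollary~\ref{easycor3} are exactly (i)--(ii), so for any shift-invariant $\nu$ with $\supp(\nu)\subseteq X$,
\[
P(\Phi) = \int \bigl(I_\mu(x) + A_\Phi(x)\bigr)\, d\nu.
\]
The construction immediately preceding this Proposition produces a $2$-periodic $z\in X$, so I take $\nu$ to be uniform on its finite orbit $O(z)$. The values $A_\Phi(y)$ are computed directly from $\Phi$ and $y$, so the task reduces to approximating $I_\mu(y) = -\log p_\mu(y)$ for each of the finitely many $y\in O(z)$. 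By Proposition~\ref{SSF_positive}, $p_\mu$ has a uniform positive lower bound $c>0$ on $\supp(\mu)=X$, so $-\log$ is Lipschitz on $[c,1]$; thus an additive approximation of $p_\mu(y)$ to accuracy $c\epsilon/(2|O(z)|)$ yields $P(\Phi)$ to within $\epsilon$.

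Next I replace $p_\mu(y)$ by a finite-volume computable quantity. Tracing Proposition~\ref{SSM_continuity} with an explicit exponential rate $f(n)=C_1 e^{-\alpha n}$ gives $|p_{\mu,n}(y) - p_\mu(y)| \le C_2 e^{-\alpha' n}$ uniformly on $\supp(\mu)$, so choosing $n = O(\log(1/\epsilon))$ brings this difference below the required tolerance. To realize $p_{\mu,n}(y) = \mu(y_0 \mid y(\PP_n))$ as something computable, I embed it in $B_N$ with $N = n$ (or a small constant multiple) and exploit the block D-condition (supplied by SSF via Proposition~\ref{SSF_D}) to pick any admissible $\delta\in \L_{\partial B_N}(X)$. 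SSM, applied with $S=\{0\}$ and $T=\PP_n$, then gives
\[
\bigl|\mu^\delta(y_0\mid y(\PP_n)) - \mu^{\delta'}(y_0\mid y(\PP_n))\bigr| \le C_1 e^{-\alpha n}
\]
for every other admissible $\delta'\in \L_{\partial B_N}(X)$, so $\mu^\delta(y_0\mid y(\PP_n))$ is within exponentially small error of $p_{\mu,n}(y)$.

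Finally, for a fixed admissible $\delta$, each quantity $\mu^\delta(y_0 = a\mid y(\PP_n))$ is a ratio of two finite partition functions on $B_N$, with $y(\PP_n)$ and $y(\partial B_N)=\delta$ (and optionally $y_0=a$) held fixed. A standard transfer-matrix sweep along one coordinate axis of $B_N$ evaluates each such partition function in time $e^{O(N^{d-1})}$, since the transfer matrix is indexed by configurations on a $(d{-}1)$-dimensional cross-section of $B_N$ and so has at most $|\A|^{O(N^{d-1})}$ states. Plugging $N = O(\log(1/\epsilon))$ into $e^{O(N^{d-1})}$ yields the claimed running time $e^{O((\log(1/\epsilon))^{d-1})}$; summing the $|O(z)| = O(1)$ contributions preserves the bound. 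The main obstacle I expect is carefully propagating the exponential rate through Proposition~\ref{SSM_continuity} (whose proof uses SSM twice, once for the Cauchy-ness of $\hat p^n_\mu$ and once to relate it to $p_{\mu,S\cup U}$), since one needs that each application preserves exponential decay up to a change of constants.
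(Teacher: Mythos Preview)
Your argument is correct and follows the same overall strategy as the paper: represent $P(\Phi)$ via a periodic-orbit measure, reduce to approximating $p_\mu$ at finitely many points, use SSM at exponential rate to bound the error at scale $n=O(\log(1/\epsilon))$, and compute the relevant finite-volume conditional probabilities by transfer matrices in time $e^{O(n^{d-1})}$.

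The one noteworthy difference is in how the finite-volume approximation to $p_\mu(z)$ is organized. You go in two stages: first replace $p_\mu(z)$ by $p_{\mu,n}(z)$ via (an explicit-rate version of) Proposition~\ref{SSM_continuity}, then replace $p_{\mu,n}(z)$ by a single computable quantity $\mu^\delta(z_0\mid z(\PP_n))$ for one admissible $\delta\in\A^{\partial B_N}$, bounding the error by SSM. The paper instead writes $p_\mu(z)$ directly as a weighted average of $\mu(z_0\mid z(U_n)\delta)$ over all locally admissible canopies $\delta\in\A^{C_n}$ (where $\partial S_n=U_n\cup C_n$, $S_n=B_n\setminus\PP_n$), and then takes the \emph{maximum} and \emph{minimum} over $\delta$ to obtain explicit upper and lower sandwich bounds whose gap is at most $Ce^{-\alpha n}$ by SSM. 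This buys two things: it bypasses entirely the need to trace the exponential rate through Proposition~\ref{SSM_continuity} (the obstacle you flagged), and it yields a certifiable algorithm in which one can simply increase $n$ until the computed upper and lower bounds are within $\epsilon$, without having to know the SSM constants $C,\alpha$ in advance. Your approach, by contrast, needs those constants (or effective bounds on them) to choose $n$, but saves the factor $|\A^{C_n}|=e^{O(n^{d-1})}$ spent enumerating canopies---which is immaterial for the stated asymptotic bound.
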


Note that in the case $d=2$ this gives a polynomial-time
approximation scheme.

\begin{proof}

Let $\mu$ be the unique Gibbs state corresponding to $\Phi$.

Let $z$ be a periodic point, which exists by SSF, and $\nu$ the
shift-invariant atomic measure supported on the orbit of $z$.
The assumptions of Theorem~\ref{easythm} are satisfied:
assumptions (A1) and (A3) follow from SSF by
Propositions~\ref{SSF_D} and~\ref{SSF_positive}, and assumption (A2)
follows from SSM and Proposition~\ref{SSM_continuity}.

We conclude from  Theorem~\ref{easythm} that
$$
P(\Phi) = \int (I_\mu + A_\Phi) d\nu = (1/|D|)(\sum_{v \in D} -\log
p_\mu(\sigma^{ v}(z)) - \sum_{v\in D, ~v' \sim v} \Phi(z(\{v,v'\})))
$$
(here, $D \subset \mZ^2$ is a fundamental domain for $z$).

Since we assume that the exact values of the interaction $\Phi$ are
known, it suffices to compute the desired approximations to
$p_\mu(x)$ for  all $x = \sigma^v(z), ~v \in D$.  We may assume
$v=0$ (the proof is the same for all $v$).

Recall the notation from the proof of
Proposition~\ref{SSM_continuity}:
$$
S_n = B_n \setminus \PP_n, ~\partial S_n = U_n \cup C_n
$$
where
$$
U_n = (\partial S_n) \cap \PP, %B_n \cap \underline{\partial} \PP,
C_n = \partial S_n \setminus U_n.
$$
Note that no site in $C_n$ neighbors one in $U_n$. So, for any locally admissible
configurations on $C_n$ and $U_n$, their concatentation is locally admissible,
and therefore globally admissible by SSF. Then, by Propositions~\ref{full} and
~\ref{SSF_D}, any such concatenation has positive $\mu$-measure as well.
Therefore, we may use the fact that $\mu$ is an MRF to represent $p_{\mu}$ as a weighted average:
$$
p_\mu(z) = \sum_{\mbox{locally admissible }\delta \in \A^{C_n}}
~\mu(z(0) \ | \ z(U_n) \delta)\mu(\delta).
$$

Let $\delta^{z(0),n}$ achieve $\max \mu(z(0) \ | \ z(U_n) \delta)$
and $\delta_{z(0), n}$ achieve $\min \mu(z(0) \ | \ z(U_n) \delta)$
over all locally admissible $\delta \in \A^{C_n}$. Clearly,
$$
\mu(z(0) \ | \ z(U_n) \delta_{z(0), n}) \le p_\mu(z) \le \mu(z(0) \
| \ z(U_n) \delta^{z(0), n}).
$$
By SSM at exponential rate, there are constants $C, \alpha > 0$ such
that these upper and lower bounds on $p_\mu(z)$ differ by at most
$Ce^{-\alpha n}$.

This gives sequences of upper and lower bounds on  $p_\mu(z)$ with
accuracy $e^{-\Omega(n)}$.  For $\delta \in \A^{C_n}$, the time to
compute $\mu(z(0) \ | \ z(U_n) \delta)$ is $e^{O(n^{d-1})}$ since
this is the ratio of two probabilities of configurations of size
$O(n^{d-1})$, each of which can be computed using
the transfer matrix method from~\cite[Lemma 4.8]{MP2} in time $e^{O(n^{d-1})}$. Since $|\A^{C_n}| =
e^{O(n^{d-1})}$, the total time to compute the upper and lower
bounds is $e^{O(n^{d-1})}e^{O(n^{d-1})} = e^{O(n^{d-1})}$.
\end{proof}

There are many sufficient conditions for SSM at exponential rate
(for instance, see the discussion in~\cite{MP2}).

We remark that Corollary 4.13 of~\cite{MP2} gives an algorithm to
compute $P(\Phi)$ that is less efficient, in that the approximation
to within $\epsilon$ requires time $e^{O((\log
\frac{1}{\epsilon})^{(d-1)^2})}$. However, it applies more generally
than Proposition~\ref{approx_scheme} in that it requires only
assumption (ii) above and not assumption (i).

Finally, we note that, for a nearest-neighbor interaction $\Phi$,
the algorithm here to compute $P(\Phi)$ also computes $h(\mu)$ for
any Gibbs measure corresponding to $\Phi$.

\section{Connections with Thermodynamic Formalism} \label{connections}

In this section, we connect Corollary~\ref{easycor} with results
from Ruelle's thermodynamic formalism.  We begin by taking a new
look at Corollary~\ref{easycor}.

Consider the following probability-vector-valued functions:
\[
\hat{p}_{\mu,n}(x) := \mu(y(0)  = \cdot \ | \ y(\PP_n) = x(\PP_n))
\]
and
\[
\hat{p}_{\mu}(x) := \mu(y(0)  = \cdot \ | \ y(\PP) = x(\PP)).
\]
Note that these functions do not depend on $x_0$ ($\hat{p}_{\mu,n}$
is similar in spirit to the function $\hat{p}^n_{\mu}$, which was
introduced and used in the proof of
Proposition~\ref{SSM_continuity}).

Note that $\hat{p}_{\mu}$ is defined only $\mu$-a.e. By Martingale
convergence, $\hat{p}_{\mu,n}$ converges to $\hat{p}_\mu$ for
$\mu$-a.e. $x \in \supp(\mu)$.

The following result relates uniform convergence of
$\hat{p}_{\mu,n}$ with a continuity property of $\hat{p}_\mu$.

\begin{definition}
A function $g$ is {\bf past-continuous} on a shift space $X$ if it
is continuous on $X$ and, for all $x \in X$, $g(x)$ depends only on $x(\PP)$:
if $x,y \in X$ and $x(\PP) = y(\PP)$, then $g(x) = g(y)$.
\end{definition}

\begin{proposition}
\label{unif_conv_cts} Let $\mu$ be a stationary measure.
\begin{enumerate}
\item[(i)]
$\hat{p}_{\mu,n}$ converges uniformly on $\supp(\mu)$ iff
$\hat{p}_\mu$ is past-continuous on $\supp(\mu)$ (i.e.,
$\hat{p}_\mu$ agrees with a past-continuous function ($\mu$-a.e.)).
\item[(ii)]
If $\hat{p}_{\mu,n}$ converges uniformly on $\supp(\mu)$, then
$\lim_{S \rightarrow \PP} ~p_{\mu,S} = p_\mu(x)$, uniformly on
$\supp(\mu)$.
\end{enumerate}
\end{proposition}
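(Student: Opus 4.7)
My plan is to treat the two parts in sequence, with the reverse direction of (i) being the main technical point and (ii) following essentially for free from the same machinery.

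For the forward direction of (i), each $\hat{p}_{\mu,n}$ depends only on the finitely many coordinates in $\PP_n \subset \PP$, so it is (locally constant and hence) continuous and past-continuous. A uniform limit of continuous functions is continuous, and a uniform limit of functions depending only on $x(\PP)$ still depends only on $x(\PP)$, so the uniform limit is past-continuous. By martingale convergence $\hat{p}_{\mu,n} \to \hat{p}_\mu$ $\mu$-a.e., so this uniform limit agrees $\mu$-a.e. with $\hat{p}_\mu$.

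For the reverse direction of (i), suppose $g$ is a past-continuous representative of $\hat{p}_\mu$ on $\supp(\mu)$. The key lemma is the following uniform modulus of continuity: for every $\epsilon>0$ there exists $n$ such that for all $x,y \in \supp(\mu)$ with $x(\PP_n)=y(\PP_n)$ we have $|g(x)-g(y)|<\epsilon$. To prove this, consider the projection $\pi: \supp(\mu) \to \A^\PP$, $\pi(x)=x(\PP)$. Since $g$ factors set-theoretically through $\pi$, write $g=\tilde g \circ \pi$. A short subsequence argument using compactness of $\supp(\mu)$ shows $\tilde g$ is continuous on the compact set $\pi(\supp(\mu)) \subset \A^\PP$: if $\pi(x_n) \to \pi(x)$, any convergent subsequence $x_{n_k} \to x'$ has $\pi(x')=\pi(x)$, so $g(x_{n_k}) \to g(x')=g(x)$. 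Uniform continuity of $\tilde g$ then gives the stated modulus. Given this, for $m \ge n$ the tower property yields
\[
\hat{p}_{\mu,m}(x) = E_\mu\!\left[\hat{p}_\mu \,\big|\, y(\PP_m)\right](x) = \int g(y)\, d\mu^m_x(y),
\]
where $\mu^m_x$ is the conditional measure given $y(\PP_m)=x(\PP_m)$ and we use $g = \hat{p}_\mu$ $\mu$-a.e. (hence $\mu^m_x$-a.e.). The conditional measure is supported on points $y$ with $y(\PP_m)=x(\PP_m)$, hence with $y(\PP_n)=x(\PP_n)$, so the modulus bound gives $|\hat{p}_{\mu,m}(x)-g(x)|<\epsilon$ uniformly in $x \in \supp(\mu)$.

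For (ii), use the same $n$ from the modulus lemma, and note that whenever $\PP_n \subseteq S \subseteq \PP$ and $x \in \supp(\mu)$, the tower property gives
\[
p_{\mu,S}(x) = E_\mu\!\left[\hat{p}_\mu(y)_{x(0)} \,\big|\, y(S)=x(S)\right] = \int g(y)_{x(0)}\, d\mu^S_x(y).
\]
The conditioning event $[x(S)]$ is contained in $[x(\PP_n)]$, so the modulus bound applied coordinatewise yields $|p_{\mu,S}(x) - g(x)_{x(0)}|<\epsilon$. Since our convention sets $p_\mu(x)=\lim_n p_{\mu,n}(x)=g(x)_{x(0)}$ wherever the limit exists (which, by (i), is everywhere on $\supp(\mu)$ uniformly), this is exactly uniform convergence of $p_{\mu,S}$ to $p_\mu$ as $S \to \PP$ on $\supp(\mu)$. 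The main obstacle is the continuity-of-the-factor-map argument in (i); once that modulus is in hand, both conclusions reduce to the same averaging-of-a-uniformly-continuous-integrand calculation.
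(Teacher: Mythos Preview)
Your proof is correct and follows essentially the same approach as the paper. Both arguments hinge on the tower-property identity
\[
\mu(y(0)=a \mid y(S)=x(S)) \;=\; \frac{1}{\mu([x(S)])}\int_{[x(S)]} \hat p_\mu(y)_a \, d\mu(y),
\]
and then use (uniform) continuity of the past-continuous representative of $\hat p_\mu$ to control the integrand. The only difference is presentational: you make the factorization $g=\tilde g\circ\pi$ and the compactness/subsequence argument for continuity of $\tilde g$ on $\pi(\supp(\mu))$ fully explicit, whereas the paper compresses this into the single phrase ``the integrand \dots\ is a continuous, and therefore uniformly continuous, function of $y(\PP)$.'' Your added detail is not superfluous---it is precisely what justifies passing from ``continuous on $\supp(\mu)$ and $\PP$-measurable'' to a uniform modulus depending only on $y(\PP_n)$.
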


\begin{proof}

For finite $S \subset \mathcal{P}$, $a \in \A$ and $x \in
\supp(\mu)$, we can write
\begin{equation}
\label{eqn_wtd} \mu(y(0) = a \ |  \ y(S) = x(S)) = \frac{1}{\mu([
x(S)])} \int_{[x(S)]} ~ p_\mu(y(0) = a \ | \ y(\PP) = x(\PP))
d\mu(y).
\end{equation}

\noindent
(i), $\Leftarrow$: If $\hat{p}_\mu$ is past-continuous on $\supp(\mu)$, then we can
take the integrand in (\ref{eqn_wtd}) to be a continuous, and
therefore uniformly continuous, function of $y(\PP), ~y \in [x(S)]$.
Thus, taking $S = \PP_n$ we get uniform convergence of
$\hat{p}_{\mu,n}$.\\

\noindent
(i), $\Rightarrow$: If $\hat{p}_{\mu,n}$ converges uniformly on $\supp(\mu)$, then its
limit is a uniform limit of past-continuous functions on $\supp(\mu)$ and thus is
past-continuous on $\supp(\mu)$. By Martingale convergence, this limit agrees with
$\hat{p}_\mu$ ($\mu$-a.e.).\\

\noindent
(ii): By (i), we may assume that $\hat{p}_\mu$ is past-continuous. Take
$a = x(0)$ in (\ref{eqn_wtd}). Given $\epsilon
> 0$, for sufficiently large $n$, if $S$ is a finite set satisfying
$\PP_n \subset S \subset \mathcal{P}$, then for all $x \in
\supp(\mu)$, the integrand in (\ref{eqn_wtd}) is within $\epsilon$
of $p_\mu(x)$, and so $p_{\mu,S}(x)$ is within $\epsilon$ of
$p_\mu(x)$.

\end{proof}

\begin{corollary}\label{easycor4}
If $\Phi$ is a nearest-neighbor interaction with underlying SFT $X$,
$\mu$ is a Gibbs measure for $\Phi$,\\

\noindent {\rm (D1)} $X$ satisfies the classical D-condition,

\noindent {\rm (D2)} $\hat{p}_\mu$ is past-continuous on $X$, and

\noindent {\rm (D3)} $c_{\mu} > 0$,\\

\noindent then
$$
P(\Phi) = \int I_{\mu}(x) + A_{\Phi}(x) \ d\nu = \int I_{\mu}(x) -
\sum_{i=1}^d \Phi(x(\{0,e_i\})) \ d\nu
$$
for every shift-invariant measure $\nu$ with $\supp(\nu) \subseteq
X$.

\end{corollary}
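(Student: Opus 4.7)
The plan is to deduce Corollary~\ref{easycor4} directly from Corollary~\ref{easycor} by showing that hypothesis (D2) can be upgraded to hypothesis (B2) of that corollary. Since (D1) and (D3) are literally (B1) and (B3), the only work is to produce the uniform convergence of $p_{\mu,S}$ to $p_\mu$ over $\supp(\mu)$.

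First I would note that $\supp(\mu) = X$ by Proposition~\ref{full}, since $\mu$ is a Gibbs measure for $\Phi$ and $X = X_\Phi$ satisfies the classical D-condition by (D1). Thus past-continuity of $\hat{p}_\mu$ on $X$ is past-continuity on $\supp(\mu)$, which is exactly the hypothesis of Proposition~\ref{unif_conv_cts}(i) (in the $\Leftarrow$ direction). That proposition yields uniform convergence of $\hat{p}_{\mu,n}$ to $\hat{p}_\mu$ on $\supp(\mu)$. Then Proposition~\ref{unif_conv_cts}(ii) converts this uniform convergence of the vector-valued conditional distributions into uniform convergence $\lim_{S \to \PP} p_{\mu,S}(x) = p_\mu(x)$ over $\supp(\mu)$, which is precisely hypothesis (B2) of Corollary~\ref{easycor}.

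With (B1), (B2), (B3) in hand, I would invoke Corollary~\ref{easycor} directly to conclude that
\[
P(\Phi) = \int I_\mu(x) + A_\Phi(x)\, d\nu = \int I_\mu(x) - \sum_{i=1}^d \Phi(x(\{0,e_i\}))\, d\nu
\]
for every shift-invariant $\nu$ with $\supp(\nu) \subseteq X$.

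There is essentially no obstacle here; the corollary is a repackaging of Corollary~\ref{easycor} in which the analytic hypothesis (B2) about pointwise uniform convergence of conditional probabilities is replaced by the more geometric/topological condition (D2) that the limiting conditional-distribution function be past-continuous. All the real content lies in Proposition~\ref{unif_conv_cts}, which was already established, and in Corollary~\ref{easycor} itself.
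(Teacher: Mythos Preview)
Your argument is correct and follows exactly the route the paper takes: the paper's own proof simply says the result follows immediately from Corollary~\ref{easycor} and Proposition~\ref{unif_conv_cts}, and you have spelled out precisely how those two ingredients combine.
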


\begin{proof}
This follows immediately from Corollary~\ref{easycor} and
Proposition~\ref{unif_conv_cts}.
\end{proof}

Next, we will show how Ruelle's thermodynamic formalism~\cite{Ruelle}
can be applied to obtain a result similar to
Corollary~\ref{easycor4}. For this, we need to give a definition
of interactions more general than nearest-neighbor.

\begin{definition}
An \textbf{interaction} is a shift-invariant function $\Phi$ from $\A^*$
%$\cup_{\Lambda \subset \zz^d, |\Lambda| < \infty} ~\A^\Lambda$
to $\mR \cup \{\infty\}$ which takes on the value
$\infty$ for only finitely many configurations on shapes containing
$0$. An interaction is \textbf{finite-range} if there exists $N$ so
that if $\Phi(w) \neq 0$, then $w$ has shape with diameter at most
$N$.
\end{definition}

We remark, without proof, that the results of this paper extend to
finite-range interactions, in particular to interactions which are
non-zero only on vertices and edges (such interactions are often
also called nearest-neighbor interactions, but form a slightly more
general class than the interactions that we have called
nearest-neighbor in this paper).

\begin{definition}
An interaction is called  \textbf{summable} if
\[
\sum_{\Lambda \subset \mathbb{Z}^d, \Lambda \ni 0, |\Lambda| <
\infty} \frac{1}{|\Lambda|} \max_{x \in \A^\Lambda: ~\Phi(x) \ne
\infty} |\Phi(x)| < \infty.
\]
An interaction is called   \textbf{absolutely summable} if
\[
\sum_{\Lambda \subset \mathbb{Z}^d, \Lambda \ni 0, |\Lambda| <
\infty} \max_{x \in \A^\Lambda: ~\Phi(x) \ne \infty} |\Phi(x)| <
\infty.
\]
\end{definition}

The underlying SFT corresponding to an interaction $\Phi$ is defined
as follows.
$$
X_\Phi = \{x \in \A^{\mathbb{Z}^d} \ : \ \Phi(x(S)) \neq \infty
\textrm{ for all finite } S \subseteq \mathbb{Z}^d\}.
$$
The role of configurations on finite subsets with $\Phi(x) = \infty$
corresponds to forbidden configurations and defines the SFT
$X_\Phi$.  This is equivalent to Ruelle's setting where one starts
with a background SFT $X$ and only considers finite-valued
interactions defined on configurations that are not forbidden.

A continuous function $A_\Phi$ and underlying SFT $X_\Phi$ can be
associated to any summable interaction $\Phi$, in a similar
fashion as what was done for nearest-neighbor interactions in Section~\ref{MRF}:
$$
A_{\Phi}(x) := -\sum_{\Lambda \subseteq \mathcal{P} \cup \{0\},
\Lambda \ni 0, |\Lambda| < \infty, ~ \Phi(x(\Lambda)) \ne \infty}
\Phi(x(\Lambda)).
$$

Let $X$ be a nonempty SFT and let $I(X)$ denote the set of summable
interactions $\Phi$  with $X_\Phi = X$. Ruelle shows
in~\cite[Section 3,2]{Ruelle} that for any continuous function $f$
on $X$, there exists a summable interaction $\Phi$ such that $X_\Phi
= X$ and $f = A_{\Phi}$.  That is, the linear mapping $\Phi \mapsto A_\Phi$
from $I(X)$ to $C(X)$ is surjective.  However, the restriction of
this mapping to the set of absolutely summable interactions is not
surjective.

For an absolutely summable interaction, the concepts of energy
function, partition function and Gibbs measure are defined
analogously to those in Section~\ref{MRF}. For details,
see~\cite{Ruelle}. For us, a Gibbs measure is shift-invariant by
definition, as in the nearest-neighbor case.

Gibbs measures and equilibrium states are intimately connected by
the following two standard theorems (these were mentioned at
the end of Section~\ref{pressure} for the special case of
nearest-neighbor interactions).  Proofs of both can be found in
~\cite{Ruelle}.

\begin{theorem}{\rm (\cite{LR})}\label{Lanford-Ruelle}
If $\Phi$ is an absolutely summable interaction, then any
equilibrium state for $A_{\Phi}$ on $X_\Phi$ is a Gibbs measure for
$\Phi$.
\end{theorem}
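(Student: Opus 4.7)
My plan is to verify the DLR conditional equations for $\mu$: that is, to show that for every finite $S \subset \zz^d$ and $\mu$-a.e.\ $\delta \in \A^{\partial S}$ with $\mu(\delta) > 0$, the conditional distribution $\mu^\delta$ on $\A^S$ agrees with the Gibbs specification $\gamma_{S,\delta}(w) := e^{-U^{\Phi}(w\delta)}/Z^{\Phi,\delta}(S)$. The workhorse is the elementary Gibbs inequality: for any probability distribution $p$ on a finite set and any strictly positive distribution $q$ on the same set, $\sum_w p(w)\log(p(w)/q(w)) \geq 0$, with equality iff $p=q$. Specialising to $q = \gamma_{S,\delta}$ rearranges this to the thermodynamic bound
\[
H(p) - \sum_w p(w)\,U^{\Phi}(w\delta) \;\leq\; \log Z^{\Phi,\delta}(S),
\]
saturated precisely when $p = \gamma_{S,\delta}$. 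The game is to show that being an equilibrium state forces this inequality to be saturated in a sense strong enough to recover the DLR equations.

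First I would fix a van Hove sequence $\Lambda_n \nearrow \infty$ and apply the bound above pointwise in $\delta \in \A^{\partial \Lambda_n}$ with $p$ equal to $\mu$ conditioned on $\delta$. Integrating against the $\mu$-marginal on $\A^{\partial \Lambda_n}$ and dividing by $|\Lambda_n|$ yields
\[
\tfrac{1}{|\Lambda_n|}\Bigl( H_\mu(x(\Lambda_n)\mid x(\partial\Lambda_n)) - \int U^{\Phi}(x(\Lambda_n)\,x(\partial\Lambda_n))\,d\mu \Bigr) \;\leq\; \tfrac{1}{|\Lambda_n|}\int \log Z^{\Phi,\delta}(\Lambda_n)\,d\mu(\delta).
\]
Next I would let $n \to \infty$. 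Absolute summability of $\Phi$ forces all boundary interaction corrections to be $o(|\Lambda_n|)$, so the left-hand side converges to $h(\mu) + \int A_{\Phi}\,d\mu$, while the right-hand side converges to $P(A_\Phi)$ by the standard thermodynamic-limit argument (the dependence of $\log Z^{\Phi,\delta}(\Lambda_n)$ on the boundary $\delta$ contributes only $o(|\Lambda_n|)$). Since $\mu$ is assumed to be an equilibrium state, the two sides match in the limit, i.e.\ the averaged Gibbs inequality is asymptotically saturated at rate $o(1)$.

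Finally, I would promote this integrated, asymptotic saturation to the local DLR equations on every fixed finite $S$. The key is the chain rule for relative entropy: one decomposes $\int D(\mu^\delta \,\|\, \gamma_{\Lambda_n,\delta})\,d\mu(\delta)$ into a sum over shifted copies of the local defect $\int D(\mu^{\delta'} \,\|\, \gamma_{S,\delta'})\,d\mu(\delta')$, obtained by iteratively conditioning the specification on $\Lambda_n$ down to translates of $S$. If DLR failed on some $S$ for a positive-$\mu$-measure set of $\delta'$, this local defect would be a strictly positive constant $\eta>0$; roughly $|\Lambda_n|/|S|$ disjoint translates of $S$ inside $\Lambda_n$ would then produce a total defect of order $|\Lambda_n|$, contradicting the vanishing of the averaged gap established in the previous step. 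Hence $\mu^\delta = \gamma_{S,\delta}$ for $\mu$-a.e.\ $\delta$ and every finite $S$, which is exactly the Gibbs property.

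The main obstacle is the last step: arranging the decomposition so that local relative-entropy defects at disjoint translates of $S$ genuinely add, rather than being cancelled or obscured by cross-interactions between sites in $S$ and its complement. Absolute summability of $\Phi$ is precisely what controls these cross-terms, ensuring that the telescoping remainder between the $\Lambda_n$-specification and a product of translated $S$-specifications is $o(|\Lambda_n|)$; without it, one cannot rule out a deficit concentrated on boundary interactions that would leave the DLR equations unprovable by this averaging argument.
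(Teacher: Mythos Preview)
The paper does not give its own proof of this theorem; it is quoted from \cite{LR}, with the remark that proofs can be found in \cite{Ruelle}. So there is nothing in the paper to compare against.

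Your outline is essentially the standard Lanford--Ruelle argument (as in Ruelle's book or Georgii's \emph{Gibbs Measures and Phase Transitions}), and the overall strategy is sound: the variational principle forces the specific relative entropy between $\mu$ and the Gibbs specification to vanish, and one then promotes this to the local DLR equations. Two points deserve more care if you want this to be rigorous. First, your chain-rule decomposition in the last step yields conditional relative entropies $D\bigl(\mu(\,\cdot \mid x(\Lambda_n \setminus S_v),\delta)\,\|\,\gamma_{S_v,\cdot}\bigr)$ where the conditioning is on all of $\Lambda_n \setminus S_v$ together with the outer boundary, not merely on $\partial S_v$. Turning these into the unconditional local defects $\int D(\mu^{\delta'}\,\|\,\gamma_{S,\delta'})\,d\mu(\delta')$ requires shift-invariance of $\mu$ and consistency of the specification, and even then one typically shows only that a positive density of translates contribute at least the local defect, which suffices but is not quite the direct sum you describe. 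Second, for a general absolutely summable (not finite-range) interaction the Gibbs specification $\gamma_{\Lambda,\cdot}$ depends on the entire exterior configuration, not just on a finite $\partial \Lambda$; your notation sometimes writes $\delta \in \A^{\partial S}$ as though the nearest-neighbor MRF picture still applies, and this should be adjusted.
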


\begin{theorem}{\rm (\cite{dob})}\label{Dobrushin}
If $\Phi$ is an absolutely summable interaction whose underlying SFT
$X_\Phi$ satisfies the D-condition, then any Gibbs measure for
$\Phi$ is an equilibrium state for $A_{\Phi}$ on $X_\Phi$.
\end{theorem}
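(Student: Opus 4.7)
My plan is to mimic the first part of the proof of Theorem~\ref{easythm} and exploit the strategy alluded to in the remark following that theorem. The goal is to prove that $h(\mu) + \int A_\Phi\, d\mu = P(\Phi)$; since $P(\Phi) = P(A_\Phi)$ by the Variational Principle (\cite[Corollary~3.13]{Ruelle}), this identifies $\mu$ as an equilibrium state of $A_\Phi$.

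First I would establish a uniform analog of equation (\ref{subexperror}) in the present setting, namely
\begin{equation}\label{proposal_eq}
\frac{1}{|\Lambda_n|}\bigl(\log Z^{\Phi}(\Lambda_n) + \log \mu(x(\Lambda_n)) + U^{\Phi}(x(\Lambda_n))\bigr) \longrightarrow 0
\end{equation}
uniformly in $x \in X_\Phi$, where $U^{\Phi}(x(\Lambda_n)) = \sum_{\Lambda \subseteq \Lambda_n} \Phi(x(\Lambda))$ and $Z^{\Phi}(\Lambda_n)$ is the corresponding partition function. The argument would run in parallel to the nearest-neighbor case: fix $\delta \in \L_{\partial M_n}(X_\Phi)$ and $w \in \L_{\Lambda_n}(X_\Phi)$, use the $D$-condition to obtain a ``filler'' $y_{w,\delta} \in \L_{M_n \setminus \Lambda_n}(X_\Phi)$ with $w y_{w,\delta} \delta \in \L(X_\Phi)$, and then use the Gibbs formula to produce matching upper and lower bounds for $\mu(w \mid \delta)$. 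The multiplicative error in these bounds is $e^{O(|M_n| - |\Lambda_n|) + O(|\partial \Lambda_n|)}$, where the $O(|\partial \Lambda_n|)$ term absorbs interactions that straddle the boundary of $\Lambda_n$ and is finite precisely because $\Phi$ is absolutely summable. Since $|M_n|/|\Lambda_n| \to 1$ and $|\partial \Lambda_n|/|\Lambda_n| \to 0$ (both from $\Lambda_n \nearrow \infty$), (\ref{proposal_eq}) follows.

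Next I would integrate (\ref{proposal_eq}) against $\mu$. The first term is a constant that already converges to $P(\Phi)$. The second term, $-\frac{1}{|\Lambda_n|}\int \log \mu(x(\Lambda_n))\, d\mu$, converges to $h(\mu)$, because along any Van Hove sequence the Shannon entropy of the marginal on $\Lambda_n$ divided by $|\Lambda_n|$ converges to the entropy of a shift-invariant measure (Lemma~\ref{vanHove} allows one to compare the sequence $\Lambda_n$ to the rectangular boxes appearing in the definition of $h(\mu)$). For the third term, shift-invariance of $\mu$ combined with a reindexing by the lexicographically maximal site of each subset gives
\[
\frac{1}{|\Lambda_n|}\int U^{\Phi}(x(\Lambda_n))\, d\mu \longrightarrow -\int A_\Phi\, d\mu,
\]
where the tail in the sum defining $A_\Phi$ is controlled by absolute summability. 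Combining the three limits, $P(\Phi) - h(\mu) - \int A_\Phi\, d\mu = 0$, as required.

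The main obstacle is the bookkeeping needed to extend (\ref{subexperror}) to an infinite-range potential: one must bound the aggregate contribution of all interactions that intersect both $\Lambda_n$ and $\Lambda_n^c$. Absolute summability lets us do this, since each site $v \in \partial \Lambda_n$ contributes at most $\sum_{\Lambda \ni 0}\max|\Phi(\cdot)| < \infty$ to the ``boundary energy,'' and the ratio $|\partial \Lambda_n|/|\Lambda_n|$ vanishes. A secondary subtlety is matching the combinatorics of the sum $\sum_{\Lambda \subseteq \Lambda_n}$ with the sum defining $A_\Phi$ (over $\Lambda \subseteq \PP \cup \{0\}$ containing $0$); this is handled by observing that each finite shape $\Lambda_0 \ni 0$ has a unique translate placing its lexicographically maximal site at the origin and a number $|\Lambda_n|(1 - o(1))$ of translates lying entirely inside $\Lambda_n$, so the averaged quantity converges to $-\int A_\Phi\, d\mu$. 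Apart from these estimates, the structure of the argument is identical to the nearest-neighbor case already treated in the proof of Theorem~\ref{easythm}.
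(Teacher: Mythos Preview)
The paper does not give its own proof of this theorem; it is quoted from \cite{dob} and the reader is referred to \cite{Ruelle} for a proof.  However, the Remark immediately following Theorem~\ref{easythm} sketches exactly your approach in the nearest-neighbor case: integrate (\ref{subexperror}) against $\mu$ to obtain $P(\Phi)=h(\mu)+\int A_\Phi\,d\mu$.  Your proposal is the natural extension of that remark to absolutely summable interactions, and it is essentially correct.

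One small caution on the adaptation to infinite range.  In the nearest-neighbor proof, the Gibbs property lets one condition on $\delta\in\L_{\partial M_n}(X_\Phi)$ because $\mu$ is an MRF.  For a general absolutely summable $\Phi$ there is no such Markov property, so you should instead average over the full exterior configuration $x(M_n^c)$ (or, equivalently, write $\mu(w)=\int \mu(w\mid x(\Lambda_n^c))\,d\mu$ and use the DLR specification directly).  Relatedly, the bound you call ``$O(|\partial\Lambda_n|)$'' for the energy of interactions straddling $\Lambda_n$ is not literally of that order when the range is infinite; what absolute summability gives is that this boundary energy is $o(|\Lambda_n|)$ (split into a tail of norm $<\epsilon$ per site and a finite-range part supported within distance $N$ of $\partial\Lambda_n$).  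With these two adjustments your argument goes through and matches the standard proof in \cite{Ruelle}.
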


\begin{definition}{\rm (\cite{Ruelle})}
Absolutely summable interactions $\Phi$ and $\Phi'$ with the same underlying SFT $X$ are
called \textbf{physically equivalent} if $A_{\Phi}$ and $A_{\Phi'}$
have a common equilibrium state.
\end{definition}

The following result is Proposition 4.7(b) from \cite{Ruelle}.

\begin{theorem}\label{ruellethm}
If $X$ is an SFT that satisfies the D-condition, and  if $\Phi,
\Phi'$ are physically equivalent absolutely summable interactions
with underlying SFT $X$, then $\int (A_{\Phi} - A_{\Phi'}) \ d\nu$
is constant for all shift-invariant measures $\nu$ on $X$.
\end{theorem}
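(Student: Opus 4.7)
By the physical equivalence hypothesis there exists a common equilibrium state $\mu$ for $A_\Phi$ and $A_{\Phi'}$. Applying Theorem~\ref{Lanford-Ruelle} to each interaction, $\mu$ is automatically a Gibbs measure for both $\Phi$ and $\Phi'$. My plan is to adapt the first portion of the proof of Theorem~\ref{easythm}, namely the uniform asymptotic~(\ref{subexperror}), from the nearest-neighbor case to absolutely summable interactions, apply the resulting identity to $\Phi$ and $\Phi'$ separately, and subtract; the $\log \mu(x(\Lambda_n))$ terms will cancel and leave a formula for $\int(A_\Phi - A_{\Phi'})\,d\nu$ that is visibly independent of $\nu$.

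The core technical step is the following generalization of~(\ref{subexperror}): for any absolutely summable $\Psi$ with underlying SFT $X$ satisfying the classical D-condition, and any Gibbs measure $\mu$ for $\Psi$,
\[
\frac{1}{|\Lambda_n|}\Bigl(\log Z^{\Psi}(\Lambda_n) + \log \mu(x(\Lambda_n)) + U^{\Psi}(x(\Lambda_n))\Bigr) \longrightarrow 0
\]
uniformly in $x \in X$, where $(\Lambda_n, M_n)$ witness the D-condition. The strategy mirrors the derivation of~(\ref{subexperror}) in the excerpt: decompose $\mu(w) = \sum_\delta \mu(w \mid \delta)\mu(\delta)$ over $\delta \in \A^{\partial M_n}$, substitute the Gibbs formula for $\mu(w \mid \delta)$, and invoke the D-condition to obtain an interpolating $y_{w,\delta}$ on the collar $M_n \setminus \Lambda_n$. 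The only non-cosmetic change is the boundary-energy bound on $|U^{\Psi}(w y_{w,\delta} \delta) - U^{\Psi}(w)|$: for a nearest-neighbor $\Psi$ this is the elementary $O(R_n)$ estimate used in the excerpt, while for absolutely summable $\Psi$ it must be controlled by a product of $|\partial \Lambda_n|$ and the tail $\sum_{\Lambda \ni 0,\,\mathrm{diam}(\Lambda) \geq r} \max |\Psi|$, chosen $o(|\Lambda_n|)$ via a suitable refinement of $(\Lambda_n, M_n)$.

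Applying the core lemma to $\Phi$ and $\Phi'$ and subtracting yields
\[
\frac{1}{|\Lambda_n|}\bigl(U^{\Phi}(x(\Lambda_n)) - U^{\Phi'}(x(\Lambda_n))\bigr) \longrightarrow P(\Phi') - P(\Phi)
\]
uniformly in $x \in X$. Integrating against $\nu$ and using shift-invariance, the remaining task is to identify the limit of $|\Lambda_n|^{-1}\int U^{\Psi}(x(\Lambda_n))\,d\nu$: grouping $U^{\Psi}(x(\Lambda_n)) = \sum_{\Lambda \subseteq \Lambda_n} \Psi(x(\Lambda))$ by translation class of $\Lambda$, applying Lemma~\ref{vanHove} to each class, and commuting the limit with the infinite sum via absolute summability, one recognizes the limit as $-\int A_{\Psi}\,d\nu$ from the definition of $A_\Psi$. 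Combining these gives
\[
\int (A_\Phi - A_{\Phi'})\,d\nu \;=\; P(\Phi) - P(\Phi'),
\]
which is visibly constant over all shift-invariant $\nu$ with $\supp(\nu) \subseteq X$.

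The main obstacle is the boundary-energy estimate in the core lemma. In the nearest-neighbor case the crude bound $|U^{\Psi}(wy\delta) - U^{\Psi}(w)| \le C_d R_n L$ used in the excerpt is immediate; for absolutely summable $\Psi$ one must combine absolute summability with a geometric refinement of $(\Lambda_n, M_n)$ so that the cumulative contribution of long-range interactions crossing $\partial \Lambda_n$ is $o(|\Lambda_n|)$ even while $|M_n|/|\Lambda_n| \to 1$. The remaining steps (cancellation, shift-invariant integration, identification of $-\int A_\Psi\,d\nu$) are direct translations of arguments already in the proofs of Theorem~\ref{easythm} and Corollary~\ref{easycor}.
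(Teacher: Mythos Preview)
The paper does not supply its own proof of Theorem~\ref{ruellethm}; the result is quoted directly as Proposition~4.7(b) of Ruelle~\cite{Ruelle}, so there is no in-paper argument to compare against.

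Your outline is sound and in fact recovers the sharper identity $\int(A_\Phi-A_{\Phi'})\,d\nu = P(\Phi)-P(\Phi')$. One point deserves care: you propose to ``substitute the Gibbs formula for $\mu(w\mid\delta)$'' with $\delta\in\A^{\partial M_n}$, mirroring the nearest-neighbor argument of Theorem~\ref{easythm}. For an absolutely summable but not finite-range $\Psi$, the Gibbs measure $\mu$ is not an MRF, so conditioning on the nearest-neighbor boundary $\partial M_n$ does not yield the Gibbs specification; the latter is defined via conditioning on the full exterior configuration. The remedy is routine---condition on $x(M_n^c)$ directly and use compactness together with the D-condition to produce the interpolant on the collar $M_n\setminus\Lambda_n$---but it should be made explicit. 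With that adjustment, the boundary-energy estimate you correctly flag as the main obstacle is the standard one: every finite $\Lambda$ contributing to the difference between the conditional Hamiltonian and $U^\Psi(w)$ meets $\Lambda_n^c$, and splitting such $\Lambda$ into those anchored in the collar versus those of large diameter, the van~Hove property of $(\Lambda_n)$ combined with the tail of the absolute-summability norm delivers the required $o(|\Lambda_n|)$ bound. The remaining steps (cancellation of $\log\mu(x(\Lambda_n))$, and the identification of $|\Lambda_n|^{-1}\int U^\Psi\,d\nu \to -\int A_\Psi\,d\nu$ by anchoring each $\Lambda$ at its lexicographically greatest site) go through as you describe.
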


In order to make a connection with Corollary~\ref{easycor4}, we need
the following.

\begin{lemma}\label{ruellelem}
If $\Phi$ is a shift-invariant nearest-neighbor interaction, $\mu$
is an equilibrium state for $A_{\Phi}$, and $I_\mu$ is continuous,
then $\mu$ is also an equilibrium state for $-I_{\mu}$.
\end{lemma}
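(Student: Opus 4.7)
The plan is to recast the conclusion in terms of pressure. By definition, $\mu$ is an equilibrium state for $-I_\mu$ on $X_\Phi$ precisely when $P(-I_\mu) = h(\mu) + \int (-I_\mu)\,d\mu$. Applying equation~(\ref{entpast}) to the shift-invariant measure $\mu$ gives $h(\mu) = \int I_\mu\,d\mu$, so the right-hand side equals $0$. Hence the lemma reduces to the single claim $P(-I_\mu) = 0$. The lower bound $P(-I_\mu) \geq 0$ is immediate from the variational principle, since $\mu$ is itself a shift-invariant measure on $X_\Phi$ and produces the value $h(\mu) - \int I_\mu\,d\mu = 0$.

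For the matching upper bound $P(-I_\mu) \leq 0$, I would show that for every shift-invariant measure $\nu$ with $\supp(\nu) \subseteq X_\Phi$,
\begin{equation*}
h(\nu) \;\leq\; \int I_\mu\,d\nu.
\end{equation*}
This is a conditional form of the Gibbs (log-sum) inequality. Rewriting $h(\nu) = \int H_\nu(x_0 \mid x_\PP)\,d\nu$ via~(\ref{entpast}), and using continuity of $I_\mu$ to view the vector-valued conditional probability
\begin{equation*}
\hat p_\mu(x) := \bigl(\mu(y(0)=a \mid y(\PP)=x(\PP))\bigr)_{a \in \A}
\end{equation*}
as a continuous, simplex-valued function on $X_\Phi$ with $I_\mu(x) = -\log \hat p_\mu(x)_{x_0}$, I would condition the integrand of $\int I_\mu\,d\nu$ on $x_\PP$ to obtain
\begin{equation*}
\int I_\mu\,d\nu \;=\; \int \Bigl(-\sum_{a \in \A} \nu(x_0 = a \mid x_\PP)\,\log \hat p_\mu(x)_a\Bigr)\,d\nu.
\end{equation*}
At $\nu$-a.e.\ $x$ the pointwise inequality
\begin{equation*}
-\sum_{a}\nu(x_0=a\mid x_\PP)\log \nu(x_0=a\mid x_\PP) \;\leq\; -\sum_{a}\nu(x_0=a\mid x_\PP)\log \hat p_\mu(x)_a
\end{equation*}
is the non-negativity of the conditional relative entropy of $\nu(x_0 = \cdot \mid x_\PP)$ against $\hat p_\mu(x)$, and integrating delivers the bound.

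The main obstacle is the extension step. The function $\hat p_\mu$ is a priori only defined $\mu$-almost everywhere, whereas I need to evaluate it $\nu$-almost everywhere, with $\supp(\nu)$ possibly not contained in $\supp(\mu)$. Continuity of $I_\mu$ on $X_\Phi$ directly gives continuity of $-\log \hat p_\mu(\cdot)_a$ on the clopen set $\{x : x_0 = a\} \cap X_\Phi$, so the work is to verify that these individual coordinates assemble into a single continuous, simplex-valued function on all of $X_\Phi$ depending only on $x(\PP)$, and coinciding with the $\mu$-conditional probabilities wherever the latter are defined (with mass $0$ on symbols that admit no completion in $X_\Phi$). Once this extension is in place, the Gibbs inequality is purely algebraic at each point, and boundedness of the continuous function $I_\mu$ on the compact space $X_\Phi$ guarantees that all integrals are finite, completing the argument.
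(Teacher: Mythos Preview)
Your proposal is correct and follows essentially the same approach as the paper: both reduce to the inequality $h(\nu)\le\int I_\mu\,d\nu$ for every shift-invariant $\nu$ on $X_\Phi$, obtained via the Gibbs/Jensen inequality. The paper applies Jensen directly to $\int\log\bigl(p_\mu(x)/p_\nu(x)\bigr)\,d\nu$ and then disintegrates over $x(\PP)$, whereas you condition on $x(\PP)$ first and invoke the pointwise non-negativity of relative entropy before integrating---two packagings of the same inequality; the extension issue you flag for $\hat p_\mu$ is treated no more explicitly in the paper than in your sketch.
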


\begin{proof}

Let $X^\PP = \{x(\PP): x \in X\}$.

Since $\log$ is a concave function, we can use Jensen's inequality to see that for any shift-invariant measure $\nu$ on $X$,
\begin{multline*}
\int_X I_{\nu} - I_{\mu} \ d\nu =
\int_X \log \left(\frac{\mu(x(0) \ | \ x(\mathcal{P}))}{\nu(x(0) \ | \ x(\mathcal{P}))} \right) \ d\nu
\leq \log \int_X \frac{\mu(x(0) \ | \ x(\mathcal{P}))}{\nu(x(0) \ | \ x(\mathcal{P}))} \ d\nu\\
= \log \int_{X^\PP} \sum_{a \in \A} \frac{\mu(x(0) = a \ | \ x(\mathcal{P}))}{\nu(x(0) = a \ | \ x(\mathcal{P}))} \ \nu(x(0) = a \ | \ x(\mathcal{P})) \ d\nu(\mathcal{P})\\
= \log \int_{X^\PP} \sum_{a \in \A} \mu(x(0) = a \ | \ x(\mathcal{P}) \ d\nu(\mathcal{P})) =
\log \int_{X^\PP} \ d\nu(\mathcal{P}) = \log 1 = 0.
\end{multline*}

So, $\int I_{\mu} \ d\nu \geq \int I_{\nu} \ d\nu$. For any $\nu$,
we have $h(\nu) = \int I_{\nu} \ d\nu$. Therefore,
\[
h(\nu) - \int I_{\mu} \ d\nu \leq h(\nu) - \int I_{\nu} \ d\nu = 0 = h(\mu) - \int I_{\mu} \ d\mu.
\]

This means that the function $h(\rho) + \int -I_{\mu} \ d\rho$ is
maximized at $\rho = \mu$, so $\mu$ is an equilibrium state for
$-I_{\mu}$ by definition.

\end{proof}

\begin{corollary}\label{easycor2}

If $\Phi$ is a nearest-neighbor interaction with underlying SFT $X
= X_\Phi$, $\mu$ is a Gibbs measure for $\Phi$,\\

\noindent {\rm (E1)} $X$ satisfies the classical D-condition, and

\noindent {\rm (E2)} $I_{\mu} = A_{\Phi'}$ for some absolutely
summable interaction $\Phi'$ with $X_{\Phi'} = X$,\\

\noindent then
$$
P(\Phi) = \int I_{\mu}(x) + A_{\Phi}(x) \ d\nu =
\int I_{\mu}(x) - \sum_{i=1}^d \Phi(x(\{0,e_i\})) \ d\nu
$$
for every shift-invariant measure $\nu$  with $\supp(\nu) \subseteq
X$.

\end{corollary}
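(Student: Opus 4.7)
The plan is to use hypothesis (E2) to identify $\mu$ as a common equilibrium state for two physically equivalent absolutely summable interactions, and then apply Ruelle's Theorem~\ref{ruellethm} to conclude that $\int (A_{\Phi} + I_{\mu}) \, d\nu$ is independent of the shift-invariant measure $\nu$ on $X$; evaluating this constant at $\nu = \mu$ will give $P(\Phi)$.

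More concretely, first I would use (E1) together with Proposition~\ref{full} to conclude that $\supp(\mu) = X$, and then invoke Theorem~\ref{Dobrushin} to see that $\mu$ is an equilibrium state for $A_{\Phi}$. Next, (E2) gives $I_{\mu} = A_{\Phi'}$, which is continuous on $X$ since $\Phi'$ is absolutely summable, so Lemma~\ref{ruellelem} applies and $\mu$ is also an equilibrium state for $-I_{\mu}$. By the linearity of the map $\Psi \mapsto A_{\Psi}$, $-I_{\mu} = -A_{\Phi'} = A_{-\Phi'}$, so $\mu$ is a common equilibrium state for the absolutely summable interactions $\Phi$ and $-\Phi'$; by definition, $\Phi$ and $-\Phi'$ are physically equivalent with underlying SFT $X$.

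Theorem~\ref{ruellethm} then says that $\int (A_{\Phi} - A_{-\Phi'}) \, d\nu = \int (A_{\Phi} + I_{\mu}) \, d\nu$ takes the same value for all shift-invariant measures $\nu$ on $X$. Evaluating at $\nu = \mu$, and using that $\mu$ is an equilibrium state for $A_{\Phi}$ together with the Variational Principle $P(\Phi) = P(A_{\Phi})$, gives
$$
\int (A_{\Phi} + I_{\mu}) \, d\mu = \int A_{\Phi} \, d\mu + h(\mu) = P(A_{\Phi}) = P(\Phi).
$$
The conclusion for arbitrary shift-invariant $\nu$ with $\supp(\nu) \subseteq X$ is then immediate.

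The main potential obstacle is a small bookkeeping check: one has to verify that the nearest-neighbor $\Phi$ of this paper, which may take the value $+\infty$, still qualifies as an absolutely summable interaction in the sense required by Theorem~\ref{ruellethm}. This is fine because $\Phi$ has finite range (so only finitely many shapes $\Lambda \ni 0$ contribute to the summability sum), and the relevant maximum is taken over configurations with $\Phi \ne \infty$. Beyond this, the proof is essentially a packaging of Ruelle's physically-equivalent-interactions theorem together with the Lanford--Ruelle--Dobrushin correspondence and Lemma~\ref{ruellelem}.
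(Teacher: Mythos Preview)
Your proof is correct and follows essentially the same route as the paper's: identify $\mu$ as an equilibrium state for $A_\Phi$ via the D-condition and Dobrushin's theorem, use Lemma~\ref{ruellelem} (with continuity of $I_\mu$ coming from (E2)) to see that $\mu$ is also an equilibrium state for $-I_\mu = A_{-\Phi'}$, apply Theorem~\ref{ruellethm} to conclude that $\int (A_\Phi + I_\mu)\,d\nu$ is constant, and evaluate at $\nu = \mu$ to identify the constant as $P(\Phi)$. Your additional remarks (invoking Proposition~\ref{full} and checking that the nearest-neighbor $\Phi$ is absolutely summable) are harmless extra detail that the paper leaves implicit.
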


\begin{proof}
Since $X$ satisfies the $D$-condition, $\mu$ is an equilibrium state
for $A_\Phi$. By Lemma~\ref{ruellelem}, $A_\Phi$ and $-I_{\mu}$ are
physically equivalent. Since $-I_{\mu} = A_{-\Phi'}$ for an
absolutely summable interaction $\Phi'$ and the D-condition holds,
by Theorem~\ref{ruellethm}, $\int A_{\Phi} + I_{\mu} \ d\nu$ is a
constant over all shift-invariant measures $\nu$ on $X$. But, for
$\nu = \mu$, this integral is $\int A_{\Phi} + I_{\mu} \ d\mu =
P(A_{\Phi}) = P(\Phi)$. Therefore, $P(\Phi) = \int I_{\mu} + A_{\Phi} \
d\nu$ for all shift-invariant $\nu$ on $X$.
\end{proof}

Corollary~\ref{easycor4} and Corollary~\ref{easycor2} give the same
integral representation for $P(\Phi)$ for every shift-invariant
measure $\nu$. The classical D-condition is assumed for both, but
the other hypotheses relate to different types of continuity: (D2)
and (D3) of Corollary~\ref{easycor4} imply past-continuity of
$I_\mu$ (by Proposition~\ref{pos}), while (E2) of
Corollary~\ref{easycor2} is a strong form of continuity of $I_\mu$:
as mentioned earlier, Ruelle~\cite[Section 3.2]{Ruelle} showed that any continuous function can be realized as $A_{\Phi'}$ for some summable (but not necessarily absolutely summable) interaction $\Phi'$.
However, we do not know if either
Corollary implies the other.

One might ask for conditions that are sufficient for each of the corollaries to apply.
For Corollary~\ref{easycor4},  we claim that SSF and SSM suffice. Of course, SSF implies (D1) and (D3)
by Propositions~\ref{SSF_D} and~\ref{SSF_positive}.
And SSM implies  uniform convergence of the sequence $\hat{p}^n_\mu$ introduced in the proof of
Proposition~\ref{SSM_continuity}; this in turn implies uniform convergence of the sequence $\hat{p}_{\mu,n}$,
which implies (D2) by Proposition~\ref{unif_conv_cts}(i).
For Corollary~\ref{easycor2}, we claim that  SSF and SSM rate at
sufficiently high rate suffice.
Of course, SSF implies (E1). For (E2), first observe that SSM at rate $f(n)$
implies that
$$
a_n:= \sup_{x,y \in X: x(B_n) = y(B_n)} |p_\mu(x) - p_\mu(y)|
$$
also decays at rate $f(n)$; since  SSF implies that  $p_\mu(x)$ is bounded away from zero on $\supp(\mu)$, the sequence
$$
b_n := \sup_{x,y \in X: x(B_n) = y(B_n)} |I_\mu(x) - I_\mu(y)|
$$
decays at rate  $C'f(n)$ for some $C' > 0$.
One can check that $\sum_n n^d b_n < \infty$
is sufficient to guarantee a realization of
$I_{\mu}$ as $A_{\Phi'}$ for some absolutely summable interaction $\Phi'$
by Ruelle's realization procedure~\cite[Section 3.2]{Ruelle}; so,
$f(n) =  Cn^{-d-2}$ for some $C > 0$ will do.
% in fact, it is easy to check that all that is needed is
%$\sum_{n=1}^\infty b_n^d < \infty$.

\

\section*{Acknowledgments}\label{acks}
We are grateful to Raimundo Brice\~{n}o, Nishant Chandgotia and David Gamarnik for several
helpful discussions. We also thank the anonymous referee for several suggestions which improved this paper.

\bibliographystyle{plain}
\bibliography{pressure}

\end{document}